\providecommand{\eprint}[2][]{\href{http://arxiv.org/abs/#2}{arXiv:#2}}
\tikzset{ext/.style={circle, draw,inner sep=1pt},int/.style={circle,draw,fill,inner sep=1pt},nil/.style={inner sep=1pt}}
\tikzset{exte/.style={circle, draw,inner sep=3pt},inte/.style={circle,draw,fill,inner sep=3pt}}
\tikzset{diagram/.style={matrix of math nodes, row sep=3em, column sep=2.5em, text height=1.5ex, text depth=0.25ex}}
\tikzset{diagram2/.style={matrix of math nodes, row sep=0.5em, column sep=0.5em, text height=1.5ex, text depth=0.25ex}}
\tikzset{every picture/.append style={baseline=-.65ex}}
\newcommand{\sgn}{{\mathit s  \mathit g\mathit  n}}
\newcommand{\End}{{\mathsf E\mathsf n \mathsf d}}
\newcommand{\Id}{{\mathrm I\mathrm d}}
\newcommand{\bS}{{\mathbb S}}
\newcommand{\di}{\mathsf{diop}}
\newcommand{\prop}{\mathsf{prop}}
\newcommand{\cA}{{\mathcal A}}
\newcommand{\cE}{{\mathcal E}}
\newcommand{\cF}{{\mathcal F}}
\newcommand{\caL}{{\mathcal L}}
\newcommand{\cM}{{\mathcal M}}
\newcommand{\cP}{{\mathcal P}}
\newcommand{\cQ}{{\mathcal Q}}
\newcommand{\Hom}{{\mathrm H\mathrm o\mathrm m}}
\newcommand{\LieBi}{{\caL \mathit{ieb}}}
\newcommand{\QP}{\mathcal{QP}\mathit{ois}}
\newcommand{\QLP}{\mathcal{QLP}}
\newcommand{\Frob}{{\cF rob}}
\newcommand{\gr}{\mathrm{gr}}
\newcommand{\udot}{{\:\raisebox{4pt}{\selectfont\text{\circle*{1.5}}}}}
\newcommand{\ttt}{\text{-}}
\let\ge\geqslant
\let\leq\leqslant
\let\geq\geqslant
\newcommand{\tw}{{\mathsf{Tw}}}
\newcommand{\antish}{\text{\raisebox{\depth}{\textexclamdown}}} %antishriek
\newcommand{\cin}{\mathsf{in}}
\newcommand{\cout}{\mathsf{out}}
\newcommand{\cloop}{\mathsf{loop}}
\newcommand{\calC}{\mathcal{C}}
\newcommand{\calP}{\mathcal{P}}
\newcommand{\calF}{{\mathcal{F}}}
\newcommand{\calD}{\mathcal D}
\newcommand{\euF}{\EuScript{F}}
\newcommand{\genus}{{\mathbf{g}}}
\newcommand{\tloop}{{\mathbf{lp}}}
\newcommand{\qpgen}[4]{
	\begin{tikzpicture}[scale=0.7]
	\node[int] (v) at (0,0) {};
	\node (v0) at (-.5,.5) {\tiny{#1}};
	\node (v1) at (.5,.5) {\tiny{#2}};
	\node (u0) at (-.5,-.5) {\tiny{#3}};
	\node (u1) at (.5,-.5) {\tiny{#4}};
	\draw (u0) -- (v);
	\draw (u1) -- (v);
	\draw (v) -- (v0);
	\draw (v) -- (v1);
	\end{tikzpicture}
}
\newcommand{\qprel}[6]{
\begin{tikzpicture}[scale=0.4]
\node[int] (v) at (0,0) {};
\node[int] (u) at (1,.5) {}; 
\node (v1) at (-.5,1) {\tiny{#1}};
\node (v2) at (.5,1.5) {\tiny{#2}};
\node (v3) at (1.5,1.5) {\tiny{#3}};
\node (u1) at (-.5,-1) {\tiny{#4}};
\node (u2) at (.5,-1) {\tiny{#5}};
\node (u3) at (1.5, -0.5) {\tiny{#6}};
\draw (u1) -- (v);
\draw (u2) -- (v);
\draw (u3) -- (u);
\draw (v) -- (u);
\draw (v) -- (v1);
\draw (u) -- (v2);
\draw (u) -- (v3);
\end{tikzpicture}
}
\newcommand{\multPic}[2]
{
	\begin{tikzpicture}[scale=0.1pt]
		% edges
		\draw (0,-0.55) -- (0,-2.5);
		\draw (0.5,0.5) -- (2.2,2.2);
		\draw (-0.48,0.48) -- (-2.2,2.2);
		% inner vertices
		\node[circle,draw,inner sep=1.5pt, fill = black] (A) at (0,0)   {};
		% labels
		\node at (2.7,2.8) {$\scriptscriptstyle #2$};
		\node at (-2.7,2.8) {$\scriptscriptstyle #1$};
	\end{tikzpicture}
}
\newcommand{\oner}
{{{\scriptsize{1}}\Rrightarrow}}
\newcommand{\onetoR}
{{{\scriptsize{1,2}}\Rrightarrow}}
\newcommand{\oneto}
{{\mathbf{1}
	\begin{tikzpicture}[scale=0.1,yshift=0.2em]%
		\draw[line width=1pt,->] (0,0) -- (2,0);%
	\end{tikzpicture}
}}
\newcommand{\oto}
{\scriptsize{1}
\begin{tikzpicture}[scale=0.2,yshift=0.2em]%
	\draw[line width=0.5pt,->] (0,0) -- (1,0);%
\end{tikzpicture}
\scriptsize{1}
}
\newcommand{\oneoneR}
{{{\scriptsize{1,1}}\Rrightarrow}}
\newcommand{\mLiePic}[3]
{{
		\begin{tikzpicture}[x=1mm,y=1mm]	
			% lower inner vertex
			\node[draw,circle,fill=black,inner sep=1pt] (A) at (0,0) {};
			% three edges going down from A
			\draw (0,-0.49) -- (0,-3.0);
			% two edges going up-left and up-right from A
			\draw (0.49,0.49) -- (1.9,1.9);
			\draw (-0.5,0.5) -- (-1.9,1.9);	
			% upper inner vertex (left branch)
			\node[draw,circle,fill = black,inner sep=1pt] (B) at (-2.3,2.3) {};
			% edges going up from B
			\draw (-1.8,2.8) -- (0,4.9);
			\draw (-2.8,2.9) -- (-4.6,4.9);	
			% labels
			\node at (2.7,2.3) {\scriptsize $#3$};
			\node at (0.4,5.3) {\scriptsize $#2$};
			\node at (-5.1,5.3) {\scriptsize $#1$};
		\end{tikzpicture}
}}
\newcommand{\diam}{
	\begin{tikzpicture}[yscale=0.07,xscale = 0.15]
	% Draw the diamond (quadrilateral)
	\draw (0, 1) -- (1, 0) -- (0, -1) -- (-1, 0) -- cycle;
	% Draw the vertical line passing through the top and bottom vertices, extending above and below
	\draw (0, 1.5) -- (0, -1.5);
	\draw[white, line width=6pt] (0, 0.3) -- (0, -0.3); % Erase the middle part of the line
\end{tikzpicture}
}
\numberwithin{equation}{section}
\newtheorem{theorem}[equation]{Theorem}
\newtheorem*{theorem*}{Theorem}
\newtheorem{proposition}[equation]{Proposition}
\newtheorem*{proposition*}{Proposition}
\newtheorem*{statement*}{Statement}
\newtheorem{lemma}[equation]{Lemma}
\newtheorem*{lemma*}{Lemma}
\newtheorem{corollary}[equation]{Corollary}
\newtheorem*{corollary*}{Corollary}
\newtheorem{definition}[equation]{Definition}
\newtheorem*{definition*}{Definition}
\newtheorem{notation}[equation]{Notation}
\newtheorem{remark}[equation]{Remark}
\newtheorem*{remark*}{Remark}
\newtheorem*{example*}{Example}
\begin{document}
	
\title{The properad of quadratic Poisson structures is Koszul}
\author{Anton Khoroshkin\thanks{Department of Mathematics, University of Haifa, Mount Carmel, 3498838, Haifa, Israel}
}

\date{\vspace{-5ex}}
\maketitle

\begin{abstract}
	In this paper we suggest a sufficient condition on the properadic envelope of a quadratic dioperad to be Koszul in terms of twisted associative algebras. 
	As a particular example we show that the properad of quadratic Poisson structures is Koszul. 
\end{abstract}

\setcounter{tocdepth}{1}
\tableofcontents

\setcounter{section}{-1}
\section{Introduction}

\renewcommand{\theequation}{\Alph{equation}}
\label{sec::intro}

Properads and PROPs provide a conceptual language for encoding algebraic structures governed by operations with \emph{several inputs and several outputs}, together with all relations between their possible compositions.
PROPs were introduced by Mac~Lane in connection with categorical algebra and the formalization of multilinear operations~\cite{MacLane65}.
Since then, they have become a standard framework for describing structures such as (co)associative bialgebras, Lie bialgebras, Frobenius algebras, and, more generally, algebraic structures involving ``many--to--many'' operations.
Properads can be viewed as a ``connected'' version of PROPs, and they arise naturally whenever one wants to restrict attention to connected compositions, as in the theory of bialgebras (\cite{D1,D2}), Lie bialgebras (\cite{MW}), and their homotopy versions, as well as in deformation problems governed by graph complexes (\cite{Merkulov::Willwacher::Ribbon:graphs}), string topology (\cite{CMW,Merkulov::String::Topology}) and higher operations; see~\cite{Va,MV}.
Since (pr)operads encode algebraic structures up to coherent homotopies, homological algebra becomes indispensable: in practice, one needs explicit quasi-free (cofibrant) resolutions in order to define and compute deformation complexes and the corresponding (co)homology theories.
Koszul duality provides an efficient mechanism for constructing such resolutions, and when a properad is Koszul, its minimal (or quasi-free) resolution is controlled by its quadratic dual, leading to computable models and tractable obstruction theories.
(We refer to~\cite{Va,MV,Props_Koszul} for the Koszul theory of (pro)properads and its applications to deformation theory, and to~\cite{Yau-Props} for a detailed combinatorial treatment of properadic graphs.)

Unfortunately, to the best of our knowledge, only a few examples of properads admitting explicit, "small" quasi-free resolutions (minimal models) are currently known; nearly all such examples arise from (quadratic) Koszul properads. The primary technical difficulties emerge when dealing with graphs of higher genus. At present, we are aware of only a few general methods: the theory of $\frac{1}{2}$-PROPs suggested by Kontsevich~\cite{Konts_letter} and developed in detail in~\cite{Markl_Voronov}---along with related generalizations proposed in~\cite{CMW, Merkulov::Willwacher::Ribbon:graphs}---and the theory of distributive laws developed in~\cite{Va}. A few other Koszul examples have been described recently in~\cite{BIB, Leray::protoperads_2, Leray::Vallette}. To date, the only known example of a minimal model for a non-Koszul properad was proposed in~\cite{Odd::Lie::bialg}. In general, however, there remains a lack of algorithmic or computational tools comparable to those available for associative algebras or algebraic operads.

In this paper, we propose an approach for establishing the Koszul property for the properadic envelope of a dioperad -- that is, for quadratic properads whose defining relations do not involve graphs of positive genus. To describe our main results, we introduce the following pictorial construction. Given a dioperad $\calD$, we mark one of its inputs and one of its outputs (conceptually "coloring" them) and consider compositions specifically through these distinguished slots. 
The resulting structure is a generalization of a twisted associative algebra, denoted by $\partial_{\cin}\partial_{\cout}\calD$, whose product admits the following graphical description:
\[
\begin{array}{ccc}
	\begin{tikzpicture}[scale=0.10,baseline=0,xscale=-1]
		% Left vertex -------------------------------------------------
		\node[circle,draw,inner sep=1pt] (v1) at (0,0) {};
		% top legs of v1
		\draw (v1) -- (-8,5);
		\draw (v1) -- (-4.5,5);
		\node at (0,5) {$\scriptstyle\ldots$};
		\draw (v1) -- (4.5,5);
		\draw[dotted,<-] (v1) -- (12.4,4.8);
		% bottom legs of v1
		\draw[dotted,->] (v1) -- (-8,-5);
		\draw (v1) -- (-4.5,-5);
		\node at (-1,-5) {$\scriptstyle\ldots$};
		\draw (v1) -- (4.5,-5);
		\draw (v1) -- (8,-5);
		% Right vertex -------------------------------------------------
		\node[circle,draw,inner sep=1pt] (v2) at (13,5) {};
		% top legs of v2
		\draw (v2) -- (5,10);
		\draw (v2) -- (8.5,10);
		\node at (13,10) {$\scriptstyle\ldots$};
		\draw (v2) -- (16.5,10);
		\draw[dotted,<-] (v2) -- (20,10);
		% middle wires connecting v2 downwards
		\draw (v2) -- (8,0);
		\node at (12,1) {$\scriptstyle\ldots$};
		\draw (v2) -- (16.5,0);
		\draw (v2) -- (20,0);	
	\end{tikzpicture}
	&\stackrel{\circ_{\calD}}{\Longrightarrow}&
	\begin{tikzpicture}[scale=0.08]
		% central vertex
		\node[circle,draw,inner sep=0.8pt] (v) at (0,0) {};
		% coordinates for top half-edges
		\coordinate (t1) at (-8,5);
		\coordinate (t2) at (-4.5,5);
		\coordinate (tdots) at (-1,5);
		\coordinate (t{n-1}) at (4.5,5);
		\coordinate (tn) at (8,5);
		% coordinates for bottom half-edges
		\coordinate (b1) at (-8,-5);
		\coordinate (b2) at (-4.5,-5);
		\coordinate (bdots) at (-1,-5);
		\coordinate (b{n-1}) at (4.5,-5);
		\coordinate (bn) at (8,-5);
		% upward
		\draw[dotted,<-] (v) -- (t1);
		\draw (v) -- (t2);
		\node at (tdots) {$\scriptstyle\ldots$};
		\draw (v) -- (t{n-1});
		\draw (v) -- (tn);
		% downward
		\draw (v) -- (b1);
		\draw (v) -- (b2);
		\node at (bdots) {$\scriptstyle\ldots$};
		\draw (v) -- (b{n-1});
		\draw[dotted,->] (v) -- (bn);
	\end{tikzpicture}
	\\[4pt]
	\bigl(\partial_{\cin}\partial_{\cout}\calD\bigr)\diamond
	\bigl(\partial_{\cin}\partial_{\cout}\calD\bigr)
	&&
	\partial_{\cin}\partial_{\cout}\calD
\end{array}
\]
Similarly, by marking multiple inputs of $\calD$, one obtains a right $\partial_{\cin}\partial_{\cout}\calD$--module.

\begin{theorem}
	\label{thm::main}
	Let $\calD$ be a quadratic dioperad generated by an $\bS^{\mathsf{op}}\times\bS$--collection $\cE$ such that $\cE(m,n)=0$ for $m=0$ or $n=0$.
	\begin{itemize}[itemsep=0pt,topsep=0pt]
		\item ({\bf Theorem~\ref{thm::tw::diop}})
		If the associated twisted associative algebra $\partial_{\cin}\partial_{\cout}\calD$ is quadratic, generated by $\partial_{\cin}\partial_{\cout}\cE$, and Koszul, then the dioperad $\calD$ is Koszul.
		\item ({\bf Theorem~\ref{thm::Prp::Tw}})
		If, moreover, the right module $\partial_{\cin}^2\partial_{\cout}\calD$ over the twisted associative algebra $\partial_{\cin}\partial_{\cout}\calD$ is quadratic, generated by $\partial_{\cin}^2\partial_{\cout}\cE$, and Koszul, then the properadic envelope of the quadratic dual dioperad $\calD^!$ is a quadratic Koszul properad.
	\end{itemize}
\end{theorem}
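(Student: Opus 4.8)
The plan is to prove both statements by showing that the relevant Koszul complex is acyclic --- equivalently, that the cobar construction of the Koszul dual co(di/pr)operad resolves the structure --- and to do so by introducing filtrations whose associated graded complexes are assembled from the Koszul complexes of the twisted associative algebra $A:=\partial_{\cin}\partial_{\cout}\calD$ and of the right $A$-module $M:=\partial_{\cin}^2\partial_{\cout}\calD$. In this way the Koszulness hypotheses on $A$ and $M$ feed directly into spectral sequence arguments. Throughout, the hypothesis $\cE(m,n)=0$ for $m=0$ or $n=0$ is what forces every vertex to have at least one input and one output, bounding the number of vertices in terms of the genus and the number of legs and thereby guaranteeing that the filtrations below are exhaustive and bounded, so that the associated spectral sequences converge.

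For the first bullet (Theorem~\ref{thm::tw::diop}) I would work with the dioperadic Koszul complex, which is supported on connected genus-zero graphs. The marked input and marked output distinguish, in each such graph, the unique directed path (the ``spine'') joining them, with the remaining subgraphs grafted along it. First I would filter this complex by the combinatorial type of the off-spine data, so that the differential induced on the associated graded acts only along the spine. The key step is to identify this spine differential with a sum of shifted copies of the bar differential of $A$: grafting two marked operations end-to-end along the spine is precisely the product of $A$, and the quadratic relations of $\calD$ threaded along the spine are exactly the quadratic relations generating $A$. Koszulness of $A$ then makes the associated graded acyclic, and the convergent spectral sequence yields acyclicity of the dioperadic Koszul complex, i.e.\ $\calD$ is Koszul.

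For the second bullet (Theorem~\ref{thm::Prp::Tw}) I would pass to the properadic envelope $\prop(\calD^!)$ and filter its Koszul complex by the genus (first Betti number) of the underlying connected graphs. The genus-zero stratum is exactly the dioperadic Koszul complex of $\calD^!$; since the first bullet gives that $\calD$ is Koszul and quadratic duality for dioperads then gives that $\calD^!$ is Koszul, this stratum is acyclic. The inductive step, from genus $g-1$ to genus $g$, corresponds to closing one further loop --- that is, grafting a distinguished output back onto a distinguished input. This loop-attachment datum, together with the free leg it leaves behind, is exactly what the module $M=\partial_{\cin}^2\partial_{\cout}\calD$ records: two marked inputs and one marked output, one input-output pair of which is closed up while the remaining pair stays open. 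I would arrange the genus filtration so that the associated graded at genus $g$ becomes a two-sided bar-type complex of $M$ over $A$, tensored with the already-resolved lower-genus data; Koszulness of $M$ then makes each graded piece acyclic, and assembling through the spectral sequence of the genus filtration shows that $\prop(\calD^!)$ is Koszul.

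The hard part will be the precise identification of the associated graded of the genus filtration with the module Koszul complex. One must verify that the combinatorics of loop closures in a properadic graph --- which loop is closed, in which order, and how the Koszul differential distributes among the edges that change the genus --- organizes into exactly the bar construction of $M$ over $A$, with all cross-terms between different genus strata vanishing on the associated graded. Controlling this splitting of the properadic differential according to genus, and checking that the only surviving contribution on each graded level is the module differential, is the main technical obstacle; the spine analysis of the first bullet should serve as the genus-zero prototype that the inductive step upgrades one handle at a time.
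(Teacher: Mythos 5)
Your plan for the first bullet is essentially the paper's: the ``spine'' you describe is the path $P_{\oto}(T)$ from the marked input to the marked output, edge contraction along it is identified with the bar differential of $\partial_{\cin}\partial_{\cout}\calD$, and the off-spine data is split off by a filtration and handled by induction on arity. One correction: in a general dioperadic tree there is \emph{no} directed path from input $1$ to output $1$ (the unique undirected path joining them need not be consistently oriented), so your spine is not well defined on the whole bar complex. The paper inserts a preliminary filtration by the $1$-reachable subtree $T_{\oner}(\Gamma)$ precisely to reduce to trees on which every vertex is reachable from input $1$; only there does the directed path to output $1$ exist and is unique. This is a fixable omission, but without it the identification of the spine differential with the product of $\partial_{\cin}\partial_{\cout}\calD$ (which glues a marked output to a marked input, i.e.\ uses a consistently directed edge) breaks down on the trees where the path reverses orientation.

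The second bullet is where there is a genuine gap. The natural complex to analyse is the small bar complex $\mathsf{B}^{\diamond}_{\prop}(\calD)$ (equivalently the small cobar complex $\Omega^{\diamond}_{\prop}(\calD^{\antish})$), whose differential is edge contraction and therefore \emph{preserves} the genus of the underlying graph. Filtering by genus consequently produces nothing: the complex already splits as a direct sum over genus, the associated graded is the complex itself, and there is no spectral sequence relating the genus-$g$ stratum to the genus-$(g-1)$ stratum. Your proposed inductive step --- ``closing one further loop'' by grafting a distinguished output onto a distinguished input --- is not a differential in any of the relevant complexes, so the claimed identification of the genus-$g$ graded piece with a two-sided bar complex of $\partial_{\cin}^2\partial_{\cout}\calD$ over $\partial_{\cin}\partial_{\cout}\calD$ tensored with lower-genus data has no map realizing it. Moreover the role you assign to the module is not the one it actually plays: a loop closure identifies \emph{one} output with \emph{one} input, which would be a trace-type datum on $\partial_{\cin}\partial_{\cout}\calD$, not a module with two marked inputs. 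In the paper's argument the module $\partial_{\cin}^2\partial_{\cout}\calD$ enters through genus-\emph{zero} combinatorics, namely the $1\ttt2$-reachable dioperadic trees of Proposition~\ref{prp::2tw::diop}: when two directed paths issuing from inputs $1$ and $2$ merge at a vertex, that vertex carries two distinguished incoming edges, and the path filtration turns the resulting string into the bar complex $\mathsf{B}_{\tw}(\partial_{\cin}\partial_{\cout}\calD,\partial_{\cin}^2\partial_{\cout}\calD)$. The genus reduction is achieved separately, by cutting a properadic graph along its $1$-directly reachable subtree $T_{\oneto}(\Gamma)$, whose complement has strictly smaller loop order; that, together with a further filtration by source vertices, is what drives the induction. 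To repair your proof you would need to replace the genus filtration by reachability-type filtrations of this kind (or supply a genuinely new mechanism that makes ``adding a handle'' into a chain map), which is precisely the content you deferred to ``the hard part.''
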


The proof of Theorem~\ref{thm::main} is straightforward but rather technical. It involves constructing a sequence of filtrations on the (co)bar complex such that the homology of the associated graded complexes is concentrated in the required degrees, ensuring that the corresponding spectral sequences degenerate at the second page.

In our main Corollary~\ref{cor::QP::Koszul}, we prove that \textbf{the properad of quadratic Poisson structures is Koszul}, as the assumptions of Theorem~\ref{thm::main} are satisfied for its Koszul dual. 
The deformation theory of quadratic Poisson structures, as developed in~\cite{Merkulov_QP}, originally relied on the conjecture that the associated (wheeled) properad is Koszul. In this paper, we resolve this by proving the Koszulness of the properad itself; however, the question of whether the corresponding wheeled properad is Koszul remains an open problem.

It is worth mentioning that verifying the sufficient conditions on the twisted associative algebra $\partial_{\cin}\partial_{\cout}\calD$ and on its module $\partial_{\cin}^2\partial_{\cout}\calD$ is relatively straightforward.
We therefore expect that our method can be applied to other properads for which the Koszul property has not yet been established.

\renewcommand{\theequation}{\thesection.\arabic{equation}}

\subsection{Structure of the paper}

Section~\S\ref{sec::Recollection} contains the necessary definitions and references for the operadic-type notions used throughout this manuscript.
In particular, we recall the notions of dioperads, properads, properadic envelopes, and Koszul duality.

Section~\S\ref{sec::Tw::As} is devoted to sufficient conditions for a dioperad to be Koszul (\S\ref{sec::Tw::diop}), as well as sufficient conditions for its properadic envelope to be Koszul (\S\ref{sec::properad::envelope}).

In Section~\S\ref{sec::QPois}, we explain why the aforementioned sufficient conditions are satisfied for the properad of quadratic Poisson structures.
Some other examples of Koszul properads to which our method can also be applied are listed in~\S\ref{sec::examples}.

\subsection*{Acknowledgements}
I am grateful to Sergei Merkulov, who initiated this project through natural questions arising in the deformation theory of quadratic Poisson structures, as discussed in the joint preprint~\cite{Merkulov_QP};
to Marco Zivcovic, who suggested several simplifications of the proof of Theorem~\ref{thm::tw::QP}.
I am also grateful to Vladimir Dotsenko for conversations/comments and providing me some references and to Bruno Vallette and Alex Takeda for many useful comments on the first draft of the paper.

%\newpage

\section{Recollection on Dioperads, Properads and Koszul duality}
\label{sec::Recollection}
\subsection{Dioperads versus  properads}
A properad is an operadic-type structure that encodes operations with multiple inputs and outputs, 
whose composition is given by gluing directed graphs along several legs at once.

\begin{definition}
	\label{def::properad}	
	A \emph{properad} consists of a collection of $\bS_m^{\mathsf{op}} \times \bS_n$-modules
	\(
	\{\mathcal{P}(m,n)\}_{m,n \ge 1},
	\)
	interpreted as the space of operations with $m$ inputs and $n$ outputs, together with 
	\emph{composition maps}
	\[
	\mathcal{P}(m_1,n_1) \otimes \mathcal{P}(m_2,n_2)
	\longrightarrow 
	\mathcal{P}(m_1 + m_2 - k,\; n_1 + n_2 - k),
	\]
	defined whenever $k$ outputs of the second operation are plugged into $k$ inputs of the first, 
	for any $k \ge 0$. Diagrammatically, this corresponds to grafting directed graphs along matching 
	sets of legs. These data satisfy:
	\begin{itemize}[itemsep=0pt, topsep=0pt]
		\item Associativity\footnote{Our use of the term "\emph{associative}" to describe the relations on partial compositions is somewhat imprecise. In the context of operads, these relations are more accurately described as 'pre-Lie,' whereas in the case of properads, they are referred to as 'Lie-admissible' or 'Lie-graph' relations (see~\cite{Campos_Vallette} for a detailed treatment).} 
		in the natural graphical sense: grafting several corollas in any order 
		produces the same result.
		\item Compatibility of all compositions with the actions of the symmetric groups on inputs 
		and outputs.
		\item The existence of a unit element $\Id \in \mathcal{P}(1,1)$, acting as the identity for all 
		allowed compositions.
	\end{itemize}
\end{definition}

Note that for every properad $\calP$, and for every oriented graph $\Gamma$ with $m$ inputs, $n$ outputs, and no directed cycles, 
there is a well-defined composition map
\[
\circ_{\Gamma} :
\bigotimes_{v\in \mathsf{Vertices}(\Gamma)} 
\mathcal{P}(\cin(v),\cout(v))
\longrightarrow 
\mathcal{P}(m,n),
\]
defined by iterated compositions. (Here $\cin(v)$ and $\cout(v)$ denotes the number of incoming and outgoing edges to the vertex $v$ of $\Gamma$.)
The associativity property ensures that this composition does not depend on the order in which 
the grafting is performed.

\begin{definition}
	If in Definition~\ref{def::properad} we allow composition maps only along one leg (i.e. $k=1$), then the resulting 
	algebraic structure is called a \emph{dioperad}. 
\end{definition}
Note that for a dioperad $\calP$ the iterated compositions $\circ_{\Gamma}$ are well defined only for trees $\Gamma$ and are not allowed for the graphs with higher genus.

For any vector space $V$, the collection 
\(
\End_V(m,n) := \Hom(V^{\otimes m},\, V^{\otimes n})
\)
(the space of multilinear operations on $V$ with $m$ inputs and $n$ outputs), equipped with its 
natural composition, forms an example of a properad.

\begin{definition}
	A vector space $V$ is called a \emph{representation} of a properad (resp.dioperad) $\calP$ if there exists a morphism 
	of properads $\varphi : \calP \to \End_V$.
	
	A representation of $\calP$ is also referred to as a \emph{$\calP$-gebra} after~\cite{Va}.
\end{definition}

\begin{definition}
	Let $\calD$ be a dioperad.  
	The \emph{properadic envelope} of $\calD$, denoted $\mathsf{U}_{\prop}(\calD)$, is the properad obtained by freely adjoining all compositions along connected directed acyclic graphs, subject only to the relations already present in~$\calD$ on each vertex.
	
	More explicitly, for each pair of integers $(m,n)$ with $m,n \ge 0$, the component
	\[
	\Bigl(\mathsf{U}_{\prop}(\calD)\Bigr)(m,n)
	\]
	is the direct sum over all connected directed graphs with $m$ outputs and $n$ inputs, whose vertices are labelled by elements of the corresponding components of $\calD$, modulo the relations induced by the dioperadic compositions.  
	The properadic composition in $\mathsf{U}_{\prop}(\calD)$ is given by grafting graphs along matching inputs and outputs.
\end{definition}
\begin{proposition}
	The universal enveloping functor is the left adjoint to the forgetful functor:
	$$
	\begin{tikzcd}
		U_{\prop}: \mathsf{Dioperads} \arrow[rr, shift left = 4.5pt] & &
		\mathsf{Properads}	\arrow[ll,"\perp"',shift left = 4pt] \colon \mathsf{Forget}
	\end{tikzcd}
	$$	
\end{proposition}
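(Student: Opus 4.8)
The plan is to exhibit the adjunction via a natural bijection
$$
\Hom_{\mathsf{Properads}}\bigl(U_{\prop}(\calD),\calP\bigr)
\;\cong\;
\Hom_{\mathsf{Dioperads}}\bigl(\calD,\mathsf{Forget}(\calP)\bigr),
$$
natural in the dioperad $\calD$ and in the properad $\calP$. I would organize the whole argument around the universal property of $U_{\prop}(\calD)$ that is already implicit in its defining construction, so that the adjunction becomes essentially a restatement of that property rather than a separate computation.

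First I would construct the unit. For a dioperad $\calD$ there is a canonical dioperad morphism $\eta_\calD\colon \calD \to \mathsf{Forget}\bigl(U_{\prop}(\calD)\bigr)$ sending an operation $d\in\calD(m,n)$ to the one-vertex graph (corolla) labelled by $d$. This is a morphism of dioperads precisely because the single-leg compositions in $U_{\prop}(\calD)$ are defined to agree, on corollas, with the dioperadic compositions of $\calD$; naturality of $\eta$ in $\calD$ is immediate from the functoriality of $U_{\prop}$ on generators.

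Next I would establish the universal property: given any properad $\calP$ and any dioperad morphism $f\colon\calD\to\mathsf{Forget}(\calP)$, there is a \emph{unique} properad morphism $\tilde f\colon U_{\prop}(\calD)\to\calP$ with $\mathsf{Forget}(\tilde f)\circ\eta_\calD=f$. The map $\tilde f$ is forced: on a connected directed acyclic graph $\Gamma$ whose vertices $v$ are labelled by $d_v\in\calD$, one must set
$$
\tilde f\bigl(\Gamma;(d_v)_v\bigr)\;=\;\circ^{\calP}_{\Gamma}\bigl((f(d_v))_v\bigr),
$$
the iterated composition in $\calP$ along $\Gamma$. Uniqueness is clear, since a properad morphism is determined by its values on the generating corollas — which must be $f$ — together with compatibility with all graftings. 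For existence I would verify three points: (i) $\tilde f$ is well defined modulo the relations of $U_{\prop}(\calD)$, which are exactly the vertex-wise dioperadic relations of $\calD$, and which $f$ respects so that $\circ^{\calP}_\Gamma$ carries them to the corresponding relations in $\calP$; (ii) $\tilde f$ intertwines the properadic compositions, which reduces to the identity ``graft-then-compose $=$ compose-then-graft'' and follows from the associativity axiom of $\calP$; and (iii) $\mathsf{Forget}(\tilde f)\circ\eta_\calD=f$, which holds on corollas by construction.

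Finally, reading off the bijection $\phi\mapsto \mathsf{Forget}(\phi)\circ\eta_\calD$ with inverse $f\mapsto\tilde f$, and checking naturality in both variables, completes the proof. I expect the only genuine obstacle to be points (i)--(ii): verifying that the assignment $\Gamma\mapsto\circ^{\calP}_\Gamma$ is well defined and compatible with grafting. The subtlety is that $U_{\prop}(\calD)$ introduces genuinely new graph topologies absent from $\calD$ — connected graphs whose underlying undirected graph has loops, yet still without directed cycles — so one must invoke the full properad associativity axiom guaranteeing that $\circ^{\calP}_\Gamma$ is independent of the order of iterated grafting. Once that coherence is secured, the remaining verifications are routine.
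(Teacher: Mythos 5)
The paper states this proposition without proof, treating it as a standard consequence of the construction of $\mathsf{U}_{\prop}(\calD)$ as the free properad on the underlying dioperad modulo vertex-wise dioperadic relations. Your argument --- unit given by corollas, universal property via $\Gamma\mapsto\circ^{\calP}_{\Gamma}$, with the well-definedness of $\circ^{\calP}_{\Gamma}$ on higher-genus directed acyclic graphs secured by the properad associativity axiom (exactly the point the paper flags in its remark following Definition~\ref{def::properad}) --- is the correct and expected verification, and I see no gap in it.
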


Note, that the genus of a graph does not change when we contract an edge in it and thus composition in the properadic envelope remembers the genus (number of loops) of the underlying graph. Consequently, we have a direct sum decomposition: 
\begin{equation}
\label{eq::univ::grading}
\mathsf{U}_{\prop}(\calD)= \bigoplus_{g\geq 0} \mathsf{U}_{\prop}^{g}(\calD).
\end{equation}
\begin{remark}
It is worth mentioning that there is a one-to-one correspondence between representations of a dioperad $\calD$ and of its properadic envelope $\mathsf{U}_{\prop}(\calD)$.
\end{remark}

Note that universal envelope $\mathsf{U}_{\prop}(\calD)$ of a quadratic dioperad $\calD$ is a quadratic properad with the same list of generators $E$ and relations $R$. However, it's quadratic dual is the quotient of the universal envelope  $\mathsf{U}_{\prop}(\calD^{!})$ of a quadratic dual dioperad $\calD^!$ by the extra relations that all quadratic compositions of higher genera are equal to zero:
$$
\forall k \geq 2 \text{ composition } E(m_1,n_1+k)\otimes E(m_2+k,n_2) \to E(m_1+m_2,n_1+n_2) \text{ is equal to }0.
$$
We use the special sign to indicate the corresponding quotient of the properadic envelope of a quadratic dioperad: 
\begin{equation}
\label{eq::envelop::!}
\mathsf{U}_{\prop}^{\diam}(\calD^!):= \bigl(\mathsf{U}_{\prop}(\calD)\bigr)^{!}.
\end{equation}

\begin{definition}
\label{def::contractible}	
	A quadratic dioperad $\calD$ is called \emph{genus-contractible} iff  $$\bigl(\mathsf{U}_{\prop}(\calD)\bigr)^{!} = \calD^{!}$$ 
	i.e. the properadic quadratic dual to the universal enveloping does not contain operations of higher genera. 
\end{definition}
The examples of \emph{genus-contractible} dioperads include Lie Bialgebras (\cite{Markl_Voronov}), Double Poisson properad (\cite{Leray::protoperads_2}) and Quadratic Poisson structures discussed in~\S\ref{sec::QPois}. On the other hand the dioperad of Frobenius algebras is not genus-contractible.

\subsection{Koszul duality}

Constructing a minimal model (a small explicit quasi-free resolution) of a given operad (or properad) is typically a difficult task.
However, among the available homological tools, one encounters the notion of \emph{Koszul duality}, first introduced for (quadratic) algebras in~\cite{Priddy} and later extended to the operadic setting in~\cite{Ginzburg:Kapranov}.
A quadratic algebra or operad is called \emph{Koszul} if the generators of its minimal resolution coincide with those of its quadratic dual (see~\cite{PP} for algebras and~\cite{Loday::Vallette} for operads).
Such objects form one of the simplest classes for which cohomology admits a combinatorial description.
The theory of Koszul duality for properads was developed in~\cite{Va} and later revisited in~\cite{Props_Koszul}.

It is worth mentioning that the existence of a quadratic Gr\"obner basis (the so-called PBW property) implies Koszulness.
This notion was first introduced for (commutative) algebras~\cite{Buchberger}, later extended to operads~\cite{Hoffbeck, DK}, and is nowadays known for various operadic-type structures; unfortunately, no such theory is currently available for properads.
At present, only a small number of quadratic properads are known to satisfy the Koszul property, and verifying this property in concrete examples remains a subtle problem.
One of the main goals of this paper is to provide several new examples in which Koszul duality can be verified explicitly.

\begin{definition}
	A \emph{quadratic properad} is a properad of the form
	\(
	\mathcal{P} \;=\; \mathcal{F}(E)/(R),
	\)
	where:
	\begin{itemize}[itemsep=0pt, topsep=0pt]
		\item $\mathcal{F}(E)$ is the free properad generated by an $\bS$--bimodule $E=\{E(m,n)\}$,
		concentrated in a fixed homological degree;
		\item $R \subset \mathcal{F}^{(2)}(E)$ is a sub--$\bS$--bimodule of the weight~$2$ component
		(i.e.\ spanned by directed graphs with exactly two vertices);
		\item $(R)$ denotes the smallest properadic ideal of $\mathcal{F}(E)$ containing $R$.
	\end{itemize}
	We write $\mathcal{P} = \mathcal{F}(E,R)$.
\end{definition}

\begin{definition}
	Let $\mathcal{P}=\mathcal{F}(E,R)$ be an augmented dg properad.
	The \emph{bar construction} of $\mathcal{P}$ is the dg co\-properad
	\[
	\mathsf{B}\mathcal{P}
	\;:=\;
	\big(\mathcal{F}^c(s\overline{\mathcal{P}}),\, d_{\mathsf{B}}\big),
	\]
	where:
	$\overline{\mathcal{P}}$ is the augmentation ideal;
	$s$ denotes the suspension;
	$\mathcal{F}^c(-)$ is the cofree conilpotent co\-properad.
	The differential $d_{\mathsf{B}}$ is the coderivation extending the sum of
	the internal differential of $\mathcal{P}$ and the decomposition map
	induced by the properad structure.
Thus, $\mathsf{B}\mathcal{P}$ is a dg co\-properad whose weight filtration encodes all possible decompositions of elements of $\mathcal{P}$.
\end{definition}

\begin{definition}
	Let $\mathcal{C}$ be a coaugmented dg co\-properad.
	The \emph{cobar construction} of $\mathcal{C}$ is the dg properad
	\[
	\Omega\mathcal{C}
	\;:=\;
	\big(\mathcal{F}(s^{-1}\overline{\mathcal{C}}),\, d_{\Omega}\big),
	\]
	where:
	$\overline{\mathcal{C}}$ is the coaugmentation coideal;
	$s^{-1}$ denotes desuspension;
	$\mathcal{F}(-)$ is the free properad;
	$d_{\Omega}$ is the derivation extending the sum of the internal differential
	of $\mathcal{C}$ and the infinitesimal coproduct induced by the co\-properad structure.
	
	In other words, $\Omega\mathcal{C}$ is the free dg properad encoding all possible compositions
	of generators coming from~$\mathcal{C}$.
\end{definition}

\begin{definition}
	Let $\mathcal{P}=\mathcal{P}(E,R)$ be a quadratic properad, and let
	$\mathcal{P}^\antish$ denote its quadratic dual co\-properad.
	The properad $\mathcal{P}$ is called \emph{Koszul} if the canonical morphism
	\begin{equation}
		\label{eq::cobar}
		\Omega(\mathcal{P}^\antish) \;\longrightarrow\; \mathcal{P}
	\end{equation}
	induced by the projection $s^{-1}\overline{\mathcal{P}^\antish} \to E$
	is a quasi-isomorphism of dg properads.
	
	Equivalently, $\mathcal{P}$ is Koszul if the inclusion of the quadratic dual cooperad
	into the bar construction,
	\begin{equation}
		\label{eq::bar}
		\mathcal{P}^\antish \longrightarrow \mathsf{B}\mathcal{P},
	\end{equation}
	is a quasi-isomorphism.
\end{definition}

In this paper, we are interested in the situation where the underlying properad
is the properadic envelope of a quadratic dioperad.
Accordingly, we restrict attention to smaller versions of the (co)bar constructions
in order to verify the Koszul property.

\begin{definition}
	For a dioperad $\calD$, we denote by 
\begin{itemize}[itemsep=0pt, topsep=0pt]	
\item by	$\mathsf{U}^{0}_{\prop}(\calD)$ the quotient properad
	$\mathsf{U}_{\prop}(\calD)/(\oplus_{g>0}\mathsf{U}^{g}_{\prop}(\calD))$,
	where all properadic compositions of positive genus are set to zero.
\item	
	by $\mathsf{B}^{\diamond}_{\prop}(\calD)$  the bar construction of the quotient properad
	$\mathsf{U}^{0}_{\prop}(\calD)$.
\item	
	Similarly, for a codioperad $\calC$, we define the quasi-free properad
	$\Omega^{\diamond}_{\prop}(\calC)$ as the cobar construction of the 
	$\calC$ with higher genus cocomposition set to be zero.
\end{itemize}	
\end{definition}

\begin{lemma}
	For a quadratic dioperad $\calD$ generated by elements of homological degree $0$ and its quadratic dual codioperad $\calD^{\antish}$, we have
	$$
	\mathrm{H}^{0}\!\left(\Omega_{\prop}^{\diamond}\bigl(\calD^{\antish}\bigr)\right)
	=
	\mathrm{H}^{0}\!\left(
	\Omega_{\prop}
	\Bigl(\mathsf{U}_{\prop}^{\diam}(\calD^{\antish})\Bigr)
	\right)
	=
	\mathsf{U}_{\prop}(\calD),
	$$
	and the canonical projection~\eqref{eq::cobar} fits into the following commutative triangle:
	\begin{equation}
		\label{eq::bar::cobar::diop}
		\begin{tikzcd}
			\Omega_{\prop}^{\diamond}\bigl(\calD^{\antish}\bigr)
			\arrow[rd,two heads] \arrow[rr,hook] & &
			\Omega_{\prop}
			\Bigl(\mathsf{U}_{\prop}^{\diam}(\calD^{\antish})\Bigr)
			=
			\Omega_{\prop}
			\Bigl(\bigl(\mathsf{U}_{\prop}(\calD)\bigr)^{\antish}\Bigr)
			\arrow[ld,two heads] \\
			& \mathsf{U}_{\prop}(\calD). &
		\end{tikzcd}
	\end{equation}
	Here the notation $\diam$, defined in~\eqref{eq::envelop::!}, keeps track of the additional relations
	appearing in the quadratic dual of the properadic envelope.
\end{lemma}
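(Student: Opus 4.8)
The plan is to reduce both equalities to a single weight-$\le 2$ computation of the bottom cohomology of a cobar-type complex, and then to read off the triangle from the functoriality of the cobar construction.

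First I would fix the grading. Since $\calD$ is generated in homological degree $0$, the weight-$w$ component of either $\calD^{\antish}$ or $\mathsf{U}_{\prop}^{\diam}(\calD^{\antish})=\bigl(\mathsf{U}_{\prop}(\calD)\bigr)^{\antish}$ sits in homological degree $w$; after desuspension a graph in the free properad with $k$ vertices of total weight $w$ carries cohomological degree $-(w-k)$. Hence both cobar complexes are concentrated in non-positive cohomological degrees, and in each fixed weight $w$ the degree-$0$ stratum is exactly the span of graphs with $w$ weight-$1$ vertices, i.e.\ $\mathcal{F}^{(w)}(E)$, while the degree-$(-1)$ stratum consists of graphs with a single weight-$2$ vertex (decorated by the weight-$2$ part of the codata) and all remaining vertices of weight $1$. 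Therefore $\mathrm{H}^0$ in weight $w$ is simply the cokernel of the one differential $d\colon(\text{degree }{-1})\to(\text{degree }0)$.

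Second I would observe that this cokernel depends only on the weight-$1$ and weight-$2$ codata together with the genus-$0$ (single-edge) cocomposition. Both $\calD^{\antish}$ and $\bigl(\mathsf{U}_{\prop}(\calD)\bigr)^{\antish}$ have the same weight-$1$ part $sE$ and the same weight-$2$ part $s^2R$; the latter is concentrated in genus $0$ precisely because the defining relations $R$ of $\calD$, hence of $\mathsf{U}_{\prop}(\calD)$, are genus-$0$, and the weight-$2$ component of a quadratic dual is the (suspended) space of relations itself. Moreover the $\diamond$-truncation only discards higher-genus cocompositions, which act trivially on genus-$0$ weight-$2$ generators, so the differential $d\colon(\text{degree }{-1})\to(\text{degree }0)$ is literally the same map in $\Omega_{\prop}^{\diamond}(\calD^{\antish})$ and in $\Omega_{\prop}\bigl(\mathsf{U}_{\prop}^{\diam}(\calD^{\antish})\bigr)$. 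Identifying its image with the weight-$w$ part of the properadic ideal generated by $R$, both cokernels equal $\mathcal{F}^{(w)}(E)/(R)^{(w)}=\mathsf{U}_{\prop}(\calD)^{(w)}$, which yields the two displayed equalities at once.

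Third I would assemble the triangle. Because a genus-$0$ element admits only genus-$0$ cocompositions, the quadratic-dual condition is the same whether imposed in the codioperad or in the coproperad, so $\calD^{\antish}$ is exactly the genus-$0$ sub-codioperad of $\bigl(\mathsf{U}_{\prop}(\calD)\bigr)^{\antish}$. Applying the free-properad functor to this inclusion of generators, and using that on genus-$0$ generators the $\diamond$-differential is the restriction of the full cobar differential, produces the injective dg-properad morphism $\Omega_{\prop}^{\diamond}(\calD^{\antish})\hook\Omega_{\prop}\bigl(\mathsf{U}_{\prop}^{\diam}(\calD^{\antish})\bigr)$, the top arrow. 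The two slanted arrows are the canonical projections of~\eqref{eq::cobar}: each is the properad morphism that is the identity on $E$ and kills all generators of weight $\ge 2$, both are surjective since $\mathsf{U}_{\prop}(\calD)$ is generated by $E$, and the triangle commutes because all three maps agree on the shared generators.

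The main obstacle is the second step: one must be certain that the higher-weight and higher-genus discrepancies between $\calD^{\antish}$ and $\bigl(\mathsf{U}_{\prop}(\calD)\bigr)^{\antish}$, together with the suppressed higher-genus cocompositions, are invisible to $\mathrm{H}^0$. The degree count makes this transparent, since a weight-$w$ generator occupies cohomological degree $-(w-1)\le -2$ for $w\ge 3$; thus the degree-$0$ and degree-$(-1)$ strata, and the single differential between them, coincide for the two complexes, reducing everything to the standard fact that the bottom cohomology of a quadratic cobar construction recovers the quadratic (pr)operad.
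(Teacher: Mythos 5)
Your proposal is correct and follows essentially the same route as the paper: both arguments identify $\mathrm{H}^0$ with the cokernel of the differential from the adjacent degree, observe that this differential is generated by the weight-$2$ part of the codata, and note that the weight-$2$ parts of $\calD^{\antish}$ and $\bigl(\mathsf{U}_{\prop}(\calD)\bigr)^{\antish}$ coincide with $R$ and are concentrated in genus $0$, so both cokernels equal $\calF(E)/(R)=\mathsf{U}_{\prop}(\calD)$. Your version merely makes the degree bookkeeping and the commutativity of the triangle more explicit than the paper does.
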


\begin{proof}
	Let $E$ be the generators and $R$ the quadratic relations of the dioperad $\calD$
	(i.e.\ $\calD = \calF(E;R)$).
	Both cobar constructions $\Omega_{\prop}^{\diamond}(\calD^{\antish})$ and
	$\Omega_{\prop}\bigl((\mathsf{U}_{\prop}(\calD))^{\antish}\bigr)$
	are positively graded, and their degree--zero cohomology is given by the quotient of the free properad
	$\calF(E)$ by the image of the differential.
	The image of the differential coincides with the ideal generated by its quadratic component.
	This component is canonically isomorphic to $R$, since all higher--genus components are already
	relations in $\mathsf{U}_{\prop}^{\diam}(\calD^{\antish})$:
	$$
	\left\{e\circ e' \in \mathsf{U}_{\prop}^{\diam}(\calD^{\antish}) \colon e,e'\in E\right\}
	=
	\left\{e\circ e' \in \calD^{\antish} \colon e,e'\in E\right\}
	\simeq R.
	$$
	Thus, the degree--zero cohomology coincides with $\mathcal{D}$.
\end{proof}

Respectively, the embedding~\eqref{eq::bar} also fits into the following commutative triangle:
$$
\begin{tikzcd}
	\mathsf{B}\Bigl(\mathsf{U}_{\prop}^{\diam}(\calD)\Bigr) \arrow[rr,two heads]
	& &
	\mathsf{B}^{\diamond}_{\prop}(\calD) \\
	& \mathsf{U}_{\prop}(\calD^!) \arrow[lu,hook] \arrow[ru,hook]
\end{tikzcd}
$$

The following result was first observed by S. Merkulov and B. Vallette during their development of the deformation theory of properads (see~\cite[Propositions 48 and 49]{MV}). It is essentially equivalent to Theorem~\ref{thm::prp::envelope}, for which we provide a proof below to ensure this work remains self-contained:
\begin{theorem}
	\label{thm::prp::envelope}
	Let $\calD$ be a quadratic dioperad, and let $\calD^{\antish}$ denote its quadratic dual codioperad.
	Suppose that the surjection
	\begin{equation}
		\label{eq::prop::diop}
		\begin{tikzcd}
			\pi:
			\Omega_{\prop}^{\diamond}\bigl(\calD^{\antish}\bigr)
			\arrow[r,two heads] & {\mathsf{U}}_{\prop}(\calD)
		\end{tikzcd}
	\end{equation}
	is a quasi-isomorphism.
	Then the properadic envelope $\mathsf{U}_{\prop}(\calD)$ is Koszul.
	Moreover, the dioperad $\calD$ is genus-contractible in the sense of Definition~\ref{def::contractible}.
\end{theorem}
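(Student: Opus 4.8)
The plan is to extract both conclusions from the single observation that the quasi-isomorphism \eqref{eq::prop::diop} exhibits $\Omega_{\prop}^{\diamond}(\calD^{\antish})$ as a \emph{minimal} model of $\mathcal{P}:=\mathsf{U}_{\prop}(\calD)$, and then to use the uniqueness of minimal models to pin down the quadratic dual coproperad. First I would check minimality directly: the cobar differential sends a generator $s^{-1}c$ to $\sum\pm(s^{-1}c_{(1)})\cdot(s^{-1}c_{(2)})$, a sum of products of two generators, so $d_{\Omega}$ lands in the square of the augmentation ideal and is decomposable. Hence, once $\pi$ is assumed to be a quasi-isomorphism, $\Omega_{\prop}^{\diamond}(\calD^{\antish})=(\calF(s^{-1}\overline{\calD^{\antish}}),d_{\Omega})$ is \emph{the} minimal model of $\mathcal{P}$, and its generating $\bS$-bimodule $s^{-1}\overline{\calD^{\antish}}$ is manifestly concentrated in genus $0$.

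Next I would invoke the standard invariance of minimal models for connected weight-graded dg properads (\cite{MV,Props_Koszul}): the generating space of the minimal model is canonically isomorphic to $s^{-1}\overline{\mathrm{H}(\mathsf{B}\mathcal{P})}$, the desuspended reduced homology of the bar construction. The key point is that every object here carries an \emph{extra} grading by genus which is preserved by the bar differential and by the transfer isomorphism comparing the two minimal models. Matching generators therefore forces $\mathrm{H}(\mathsf{B}\mathcal{P})\cong\calD^{\antish}$ to be concentrated in genus $0$. Since the quadratic dual coproperad always embeds into the bar homology, $\mathcal{P}^{\antish}\hookrightarrow\mathrm{H}(\mathsf{B}\mathcal{P})$, as the top weight-degree (syzygy) part, genus-preservation gives that $\mathcal{P}^{\antish}$ is itself concentrated in genus $0$. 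Comparing genus-$0$ parts identifies $\mathcal{P}^{\antish}$ with $\calD^{\antish}$; this is exactly the assertion that $\calD$ is genus-contractible in the sense of Definition~\ref{def::contractible}.

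Finally, genus-contractibility closes the argument and yields Koszulness. Because $\mathcal{P}^{\antish}=\calD^{\antish}$ has only genus-$0$ cogenerators, all of its higher-genus cocompositions vanish for purely combinatorial reasons: a cocomposition indexed by a two-vertex graph of genus $g$ preserves total genus $=g+(\text{genera of the two factors})$, so a genus-$g$ graph feeding into two genus-$0$ factors would require the source to have genus $g$, impossible for $g\geq 1$ when everything is genus-$0$. Consequently the full properadic cobar differential on $\Omega_{\prop}(\mathcal{P}^{\antish})$ reduces to its tree-level part, the inclusion $\iota$ of the triangle~\eqref{eq::bar::cobar::diop} becomes an identity, and $\Omega_{\prop}(\mathcal{P}^{\antish})$ coincides with $\Omega_{\prop}^{\diamond}(\calD^{\antish})$. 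Under this identification the canonical Koszulness morphism $\Omega_{\prop}(\mathcal{P}^{\antish})\to\mathcal{P}$ is precisely $\pi$, which is a quasi-isomorphism by hypothesis; hence $\mathsf{U}_{\prop}(\calD)$ is Koszul.

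The main obstacle, and the step I expect to require the most care, is the invariance statement of the second paragraph. One must verify that the genus is an honest auxiliary grading that is respected not only by the bar differential but also by the homotopy-transfer isomorphism implicit in the uniqueness of minimal models, so that the concrete observation ``the minimal model $\Omega_{\prop}^{\diamond}(\calD^{\antish})$ has genus-$0$ generators'' may be legitimately upgraded to ``the reduced bar homology, and therefore the Koszul dual coproperad $\mathcal{P}^{\antish}$, is concentrated in genus $0$.'' Everything else is formal bookkeeping with the weight and genus gradings and the two-out-of-three property along the triangle~\eqref{eq::bar::cobar::diop}.
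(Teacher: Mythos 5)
Your argument is correct in substance and reaches both conclusions, but it travels a genuinely different (and heavier) road than the paper. The paper's proof is a two-step direct computation: apply the bar functor to the quasi-isomorphism $\pi$ (bar preserves quasi-isomorphisms) and precompose with the bar--cobar resolution $\calD^{\antish}\hookrightarrow\mathsf{B}\bigl(\Omega_{\prop}^{\diamond}(\calD^{\antish})\bigr)$; the resulting composite $\calD^{\antish}\to\mathsf{B}\bigl(\mathsf{U}_{\prop}(\calD)\bigr)$ is then a \emph{canonical} quasi-isomorphism, so the bar homology is concentrated in the minimal (syzygy) degree --- Koszulness --- and the always-present embedding $\calD^{\antish}\hookrightarrow\mathsf{U}_{\prop}(\calD)^{\antish}$ is forced to be an isomorphism --- genus-contractibility. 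You instead observe that $\Omega_{\prop}^{\diamond}(\calD^{\antish})$ is a minimal model and invoke uniqueness of minimal models together with the identification of their generators with the reduced bar homology; this works, but the uniqueness statement is itself usually proved by exactly the bar--cobar argument the paper uses, so you are importing a stronger black box to recover the same conclusion. More importantly, the obstacle you correctly flag --- whether the abstract transfer isomorphism between minimal models respects the genus grading --- is an artifact of your route and simply does not arise in the paper's: there the identification of $H\bigl(\mathsf{B}(\mathsf{U}_{\prop}(\calD))\bigr)$ with $\calD^{\antish}$ is realized by the explicit genus-zero inclusion, so no comparison isomorphism needs to be controlled. (If you wish to keep your route, note that you can sidestep the genus issue entirely: the minimal-model identification preserves arity, weight and homological degree, which gives equality of dimensions with $\calD^{\antish}$ componentwise, and combined with the canonical embedding $\calD^{\antish}\hookrightarrow\mathcal{P}^{\antish}$ a dimension count yields genus-contractibility without ever asking the transfer map to preserve genus.) Your final paragraph --- that a coproperad concentrated in genus zero automatically has vanishing higher-genus cocompositions, so $\Omega_{\prop}(\mathcal{P}^{\antish})=\Omega_{\prop}^{\diamond}(\calD^{\antish})$ and the Koszul morphism is $\pi$ --- is a nice observation that the paper does not spell out, though the paper gets Koszulness more cheaply from the bar-side criterion~\eqref{eq::bar}.
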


\begin{proof}
	Applying the bar construction to the quasi-isomorphism~\eqref{eq::prop::diop} and using the fact that
	the bar--cobar construction is quasi-isomorphic to the identity functor, we obtain a quasi-isomorphism
	\begin{equation}
		\label{eq::prop::koszul:contract}
		\begin{tikzcd}
			\calD^{\antish} \arrow[r,hook] &
			\mathsf{B}\bigl(\Omega_{\prop}^{\diamond}\bigl(\calD^{\antish}\bigr)\bigr)
			\arrow[r,two heads] & \mathsf{B}\bigl({\mathsf{U}}_{\prop}(\calD)\bigr).
		\end{tikzcd}
	\end{equation}
	It follows that the bar construction of ${\mathsf{U}}_{\prop}(\calD)$ has homology concentrated in the minimal homological degree,
	and hence $\mathsf{U}_{\prop}(\calD)$ is Koszul.
	Moreover, the embedding of $\bS^{\mathsf{op}}\times\bS$--collections
	$\calD^{\antish}\hookrightarrow{\mathsf{U}}_{\prop}(\calD)^{\antish}$
	becomes an isomorphism.
	Consequently, the latter coproperad has no higher--genus components, and therefore $\calD$ is genus-contractible.
\end{proof}

\begin{remark}
	In almost all known examples of quadratic dioperads whose universal envelope is a Koszul properad,
	the dioperad is genus-contractible.
	See, for example, the properad of Lie bialgebras~\cite{Konts_letter,Markl_Voronov}
	or the properad of double Lie algebras and double Poisson structures~\cite{Leray::protoperads_1, Leray::protoperads_2, Leray::Vallette}.
	This paper is devoted to another example of this phenomenon, namely quadratic Poisson structures
	(see \S\ref{sec::QPois} below for details).
\end{remark}

\subsection{The loop grading}
In this section, we consider properads that arise as universal envelopes of dioperads. As previously mentioned in~\eqref{eq::univ::grading}, in this case, both the properads and their associated (co)bar constructions inherit an additional grading that tracks the number of loops in the underlying graphs. We provide the details of this grading below, as it forms the basis for the inductive arguments in the subsequent sections.

\begin{notation}
\label{def::loop:order}	
	\begin{itemize}[itemsep=0pt, topsep=0pt]
		\item	
		For an ordinary connected graph $\Gamma$, we define its \textbf{genus} as the first Betti number:
		$$\genus(\Gamma) := \dim H^{1}(\Gamma) = \#\mathsf{Edges}(\Gamma) - \#\mathsf{Vertices}(\Gamma) + 1.$$ 
		\item 	  
		For a properadic graph $\Gamma$ (i.e., a directed connected graph with inputs and outputs), we define the \textbf{total loop order} of $\Gamma$ as:
		$$
		\tloop(\Gamma) := \frac{1}{2}\Bigl(\#\cin(\Gamma) + \#\cout(\Gamma)\Bigr) + \genus(\Gamma) - 1.
		$$	
	\end{itemize}    	
\end{notation}

Note that for a dioperad, the underlying graphs are trees; thus, their genus is always zero. Nevertheless, the total loop order remains strictly positive, and the following lemma holds:

\begin{lemma}
	\label{lem::total::loop}
	Cutting an internal edge in a properadic graph preserves the total loop order grading. 
\end{lemma}

\begin{proof}
	Let $e$ be an internal edge of a properadic graph $\Gamma$ (i.e., $e$ is neither an input nor an output). Cutting along this edge either splits the graph into two connected components, $\Gamma_1$ and $\Gamma_2$, or results in a new connected graph with a decreased genus but an additional input and output. In both cases, the total number of inputs and outputs increases by exactly one each.
	
	In the first case, where the graph splits, we have $\Gamma = \Gamma_1 \cup_{e} \Gamma_2$ and the following relations hold:
	$$ 
	\begin{cases}
		\genus(\Gamma) = \genus(\Gamma_1) + \genus(\Gamma_2), \\
		\# \cin(\Gamma) + 1 = \# \cin(\Gamma_1) + \# \cin(\Gamma_2), \\
		\# \cout(\Gamma) + 1 = \# \cout(\Gamma_1) + \# \cout(\Gamma_2). 
	\end{cases}
	\implies \tloop(\Gamma) = \tloop(\Gamma_1) + \tloop(\Gamma_2).
	$$
	
	In the second case, let $\Gamma'_{e}$ denote the graph obtained from $\Gamma$ by cutting along $e$ without splitting the graph. We then have the following identities:
	$$
	\begin{cases}
		\genus(\Gamma) = \genus(\Gamma'_e) + 1, \\
		\# \cin(\Gamma) + 1 = \# \cin(\Gamma'_e), \\
		\# \cout(\Gamma) + 1 = \# \cout(\Gamma'_e).
	\end{cases}
	\implies \tloop(\Gamma) = \tloop(\Gamma'_e).
	$$
\end{proof}

Consequently, the total loop order extends to a grading on the universal enveloping properad of a dioperad and on the associated (co)bar constructions.

\begin{corollary}
	Let $\calD$ be a dioperad with finite-dimensional components $\calD(m,n)$ such that $\calD(m,n)=0$ for $m+n < 3$. Then its universal enveloping properad $\mathsf{U}_{\prop}(\calD)$ is graded by the total loop order, and each graded component is finite-dimensional:
	$$
	\mathsf{U}_{\prop}(\calD) = \bigoplus_{l} \mathsf{U}_{\prop}(\calD)_{(l)}, \quad \text{where } \mathsf{U}_{\prop}(\calD)_{(l)} := \mathsf{Span}\Bigl\langle \calD(\Gamma) \colon \tloop(\Gamma) = l \Bigr\rangle.
	$$
\end{corollary}

\begin{proof}
	The existence of the grading follows directly from Lemma~\ref{lem::total::loop}. Finite-dimensionality follows from the fact that all relevant properadic graphs must be at least trivalent (due to the assumption that $\calD(m,n)=0$ for $m+n<3$). For any fixed total loop order $l$, the number of such trivalent graphs is finite.
\end{proof}

Similarly, we obtain a total loop grading with finite-dimensional graded components for the following constructions:
$$
\mathsf{U}_{\prop}^{\diam}(\calD^!) = \bigoplus_{l>0} \mathsf{U}_{\prop}^{\diam}(\calD^!)_{(l)}; \quad \Omega_{\prop}^{\diamond}(\calD^{!}) = \bigoplus_{l>0} \Omega_{\prop}^{\diamond}(\calD^{!})_{(l)}; \quad \mathsf{B}_{\prop}^{\diamond}(\calD^{!}) = \bigoplus_{l>0} \mathsf{B}_{\prop}^{\diamond}(\calD^{!})_{(l)}. 
$$

\section{From properads to twisted associative algebras and back}
\label{sec::Tw::As}
\subsection{Twisted associative algebras}
Let us recall that the category of $\bS$-collections carries a natural symmetric monoidal product given by  
\[
(\cP\diamond \cQ)(m)
\;:=\;
\bigoplus_{m_1+m_2=m}
\mathsf{Ind}_{\bS_{m_1}\times \bS_{m_2}}^{\bS_m}
\bigl(\cP(m_1)\otimes \cQ(m_2)\bigr).
\]
An associative algebra in this monoidal category is called a \emph{twisted associative algebra}.  
Such an object consists of an $\bS$-collection $\cA$ equipped with a multiplication map  
\[
\cdot : \cA\diamond\cA \longrightarrow \cA ,
\]
which is associative with respect to the monoidal structure above.

Twisted associative algebras arise naturally in several contexts (see, for example, \cite{BremnerDotsenko, Snowden} and the references therein).  
In this paper we are interested in their operadic applications, specifically those related to \emph{multiplicative universal enveloping algebras} of algebras over an operad.  
The notion of a latter universal enveloping was introduced in~\cite{Ginzburg:Kapranov} and the relationship with Twisted associative algebras was explained in~\cite{Khor::Univ_Envlop}.  

Most standard homological tools for ordinary associative algebras have natural analogues for twisted associative algebras.  
In particular, one has bar and cobar constructions, as well as a theory of Koszul twisted associative algebras.  
Moreover, the categories of left and right modules over a twisted associative algebra are abelian, and one can describe quadratic and Koszul modules in this framework.

\begin{definition}
\label{def::Bar::Twisted}	
	\begin{itemize}
		\item
		The \emph{cobar construction} of a twisted coassociative coalgebra $\cA$ is the dg twisted associative algebra
		\[
		\Omega_{\tw}(\cA)
		\;:=\;
		\bigl(\mathcal{F}(s^{-1}\overline{\cA}),\, d_{\tw}\bigr)
		\;\simeq\;
		\bigoplus_{n}
		\underbrace{
			s^{-1}\overline{\cA}\diamond\cdots\diamond s^{-1}\overline{\cA}
		}_{n\ \text{copies}},
		\]
		where $\overline{\cA}$ denotes the augmentation ideal, $s$ is the suspension, $\mathcal{F}(-)$ is the free twisted associative algebra, and $d_{\tw}$ is the derivation extending the comultiplication in~$\cA$.
		
		\item
		Dually, the \emph{bar construction} of a twisted associative algebra $\cA$ is the dg twisted coassociative coalgebra
		\[
		\mathsf{B}_{\tw}(\cA)
		\;:=\;
		\bigl(\mathcal{F}^c(s\overline{\cA}),\, d_{\tw}\bigr).
		\]
		
		\item
		A quadratic twisted associative algebra $\cA=\cF(E;R)$ is called \emph{Koszul} if any (and hence all) of the following equivalent conditions hold:
		\begin{itemize}[itemsep=0pt,topsep=0pt]
			\item
			The canonical morphism \(\Omega_{\tw}(\cA^{\antish})\to\cA\) of twisted associative algebras is a quasi-isomorphism.
			\item
			The canonical morphism \(\cA^{\antish}\to\mathsf{B}_{\tw}(\cA)\) is a quasi-isomorphism.
			\item
			The Koszul complex \(\cA\diamond\cA^{\antish}\) is acyclic.
		\end{itemize}
		
		\item
		A quadratic right module $\cM$ over a twisted associative algebra $\cA$ is called \emph{Koszul} if the canonical morphism
		\[
		\mathsf{B}_{\tw}(\cA,\cM):=\left(\bigoplus_{n}s\overline{\cA}^{\diamond n} \diamond \cM, d_{\tw}\right) 
		\longleftarrow \cM^{\antish}
		\]
		is a quasi-isomorphism.
	\end{itemize}
\end{definition}

To extend these notions to dioperads, we introduce appropriate coloured versions.

\begin{definition}
	A \emph{twisted associative 2-coloured algebra} consists of an $\bS^{\mathsf{op}}\times\bS$-collection  
	\(
	\cA=\bigoplus_{m,n}\cA(m,n)
	\)
	equipped with a multiplication map  
	\(
	\cdot : \cA\diamond\cA \to \cA \), 
	which is associative in the corresponding 2-coloured sense.
\end{definition}
Here the multiplication of collections has the following description that remembers the coloring: 
\[	(\cP\diamond\cQ)(m,n):=\bigoplus_{\substack{ m_1+m_2=m \\ n_1+n_2 =n}}
\mathsf{Ind}_{\bS_{m_1}\times \bS_{m_2}\times \bS_{n_1}\times\bS_{n_2}}^{\bS_m\times\bS_n}
\bigl(\cP(m_1,n_1)\otimes \cQ(m_2,n_2)\bigr).	 
\]	

\begin{definition}
	For any $\bS_m\times\bS_n$-collection $\cQ(m,n)$ we define:
	\begin{itemize}[itemsep=0pt,topsep=0pt]
		\item
		\(\partial_{\cin}\cQ(m,n):=\cQ(m+1,n)\);
		\item
		\(\partial_{\cout}\cQ(m,n):=\cQ(m,n+1)\);
		\item
		their iterated compositions:
		\[
		\partial_{\cin}^k\partial_{\cout}^\ell\cQ(m,n)
		:=\cQ(m+k,n+\ell),
		\]
		with respect to the standard embeddings  
		\(\bS_m\hookrightarrow\bS_{m+k}\) and \(\bS_n\hookrightarrow\bS_{n+\ell}\).
	\end{itemize}
\end{definition}

\begin{lemma}
	If $\cQ$ is a dioperad, then the $\bS^{\mathsf{op}}\times\bS$-collection  
	\(\partial_{\cin}\partial_{\cout}(\cQ)\)  
	is a 2-coloured twisted associative algebra.  
	The dioperadic composition through the "frozen" input and "frozen" output determines the associative product, represented pictorially by
	\[
	\begin{array}{ccc}
		\begin{tikzpicture}[scale=0.10,baseline=0,xscale=-1]
			% Left vertex -------------------------------------------------
			\node[circle,draw,inner sep=1pt] (v1) at (0,0) {};
			% top legs of v1
			\draw (v1) -- (-8,5);
			\draw (v1) -- (-4.5,5);
			\node at (0,5) {$\scriptstyle\ldots$};
			\draw (v1) -- (4.5,5);
			\draw[dotted,<-] (v1) -- (12.4,4.8);
			% bottom legs of v1
			\draw[dotted,->] (v1) -- (-8,-5);
			\draw (v1) -- (-4.5,-5);
			\node at (-1,-5) {$\scriptstyle\ldots$};
			\draw (v1) -- (4.5,-5);
			\draw (v1) -- (8,-5);
			% Right vertex -------------------------------------------------
			\node[circle,draw,inner sep=1pt] (v2) at (13,5) {};
			% top legs of v2
			\draw (v2) -- (5,10);
			\draw (v2) -- (8.5,10);
			\node at (13,10) {$\scriptstyle\ldots$};
			\draw (v2) -- (16.5,10);
			\draw[dotted,<-] (v2) -- (20,10);
			% middle wires connecting v2 downwards
			\draw (v2) -- (8,0);
			\node at (12,1) {$\scriptstyle\ldots$};
			\draw (v2) -- (16.5,0);
			\draw (v2) -- (20,0);	
		\end{tikzpicture}
		&\stackrel{\circ_{\cQ}}{\Longrightarrow}&
		\begin{tikzpicture}[scale=0.08]
			% central vertex
			\node[circle,draw,inner sep=0.8pt] (v) at (0,0) {};
			% coordinates for top half-edges
			\coordinate (t1) at (-8,5);
			\coordinate (t2) at (-4.5,5);
			\coordinate (tdots) at (-1,5);
			\coordinate (t{n-1}) at (4.5,5);
			\coordinate (tn) at (8,5);
			% coordinates for bottom half-edges
			\coordinate (b1) at (-8,-5);
			\coordinate (b2) at (-4.5,-5);
			\coordinate (bdots) at (-1,-5);
			\coordinate (b{n-1}) at (4.5,-5);
			\coordinate (bn) at (8,-5);
			% upward
			\draw[dotted,<-] (v) -- (t1);
			\draw (v) -- (t2);
			\node at (tdots) {$\scriptstyle\ldots$};
			\draw (v) -- (t{n-1});
			\draw (v) -- (tn);
			% downward
			\draw (v) -- (b1);
			\draw (v) -- (b2);
			\node at (bdots) {$\scriptstyle\ldots$};
			\draw (v) -- (b{n-1});
			\draw[dotted,->] (v) -- (bn);
		\end{tikzpicture}
		\\[4pt]
		\bigl(\partial_{\cin}\partial_{\cout}\cQ\bigr)\diamond
		\bigl(\partial_{\cin}\partial_{\cout}\cQ\bigr)
		&&
		\partial_{\cin}\partial_{\cout}\cQ
	\end{array}
	\]
	Furthermore, for any $d\ge 0$, the $\bS^{\mathsf{op}}\times\bS$-collection  
	\(\partial_{\cin}^d\partial_{\cout}(\cQ)\)  
	is a right module over the twisted associative algebra \(\partial_{\cin}\partial_{\cout}(\cQ)\).
\end{lemma}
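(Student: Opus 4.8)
The plan is to realise the product as a single instance of the dioperadic composition along the two distinguished legs, and then to deduce associativity from the fact that, for a dioperad, iterated composition along a tree does not depend on the order of grafting. First I would pin down the product map. For $a\in\partial_{\cin}\partial_{\cout}\cQ(m_1,n_1)=\cQ(m_1+1,n_1+1)$ and $b\in\partial_{\cin}\partial_{\cout}\cQ(m_2,n_2)=\cQ(m_2+1,n_2+1)$, write $\star_{\cin}$ and $\star_{\cout}$ for the ``frozen'' input and output of each factor. The product $a\cdot b$ is defined, as depicted in the statement, by the single-edge dioperadic composition grafting $\star_{\cout}(b)$ onto $\star_{\cin}(a)$; the two remaining frozen legs $\star_{\cin}(b)$ and $\star_{\cout}(a)$ become the frozen input and output of the result. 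A degree count confirms the target: the composition lands in $\cQ\bigl((m_1+1)+(m_2+1)-1,\,(n_1+1)+(n_2+1)-1\bigr)=\cQ(m_1+m_2+1,n_1+n_2+1)=\partial_{\cin}\partial_{\cout}\cQ(m_1+m_2,n_1+n_2)$, exactly as required.

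Next I would verify equivariance together with the unit. Since the two frozen legs are never permuted with the remaining ones, the symmetric groups act only on the $m_1+m_2$ non-frozen inputs and the $n_1+n_2$ non-frozen outputs; the shuffles of these legs between the two factors are precisely the data recorded by the induced representation in the definition of $\diamond$, so equivariance of the product is inherited from the $\bS^{\mathsf{op}}\times\bS$-equivariance of dioperadic composition. The dioperadic unit $\Id\in\cQ(1,1)=\partial_{\cin}\partial_{\cout}\cQ(0,0)$, which carries only a frozen input and a frozen output, acts as a two-sided unit, because grafting along $\Id$ is the identity composition.

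The heart of the argument is associativity, and the key step is to recognise both bracketings as one and the same tree composition. For $a,b,c$ the expressions $(a\cdot b)\cdot c$ and $a\cdot(b\cdot c)$ both amount to performing the two graftings $\star_{\cout}(c)\to\star_{\cin}(b)$ and $\star_{\cout}(b)\to\star_{\cin}(a)$: in each bracketing the intermediate product exposes exactly the frozen leg needed for the next grafting, and on both sides the surviving frozen legs are $\star_{\cin}(c)$ and $\star_{\cout}(a)$. Hence both expressions compute $\circ_{\Gamma}$ for one and the same graph $\Gamma$, namely the path on the three vertices $a,b,c$ joined along their frozen legs. As $\Gamma$ is a tree, the dioperad axioms guarantee that $\circ_{\Gamma}$ is well defined and independent of the order of grafting, which gives $(a\cdot b)\cdot c=a\cdot(b\cdot c)$.

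Finally, the right module claim is entirely parallel. An element $x\in\partial_{\cin}^{d}\partial_{\cout}\cQ(m,n)=\cQ(m+d,n+1)$ carries $d$ frozen inputs and one frozen output; I would single out one of them, say the last frozen input, as the grafting slot, and define the action of $b$ by grafting $\star_{\cout}(b)$ into that slot, the newly exposed $\star_{\cin}(b)$ becoming the grafting slot of the result (a count shows the outcome again lies in $\partial_{\cin}^{d}\partial_{\cout}\cQ$). The module associativity $(x\cdot a)\cdot b=x\cdot(a\cdot b)$ then reduces, exactly as before, to the order-independence of $\circ_{\Gamma}$ for the corresponding path tree. The only real care needed throughout is the bookkeeping of frozen versus free legs; associativity itself is a formal consequence of tree-composition coherence, so I expect the main obstacle to be notational rather than conceptual.
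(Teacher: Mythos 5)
Your proposal is correct and matches the paper's approach: the paper disposes of this lemma with the one-line remark that it is ``a direct comparison of the associativity relations in a dioperad with those for 2-coloured twisted associative algebras,'' and your argument simply fills in that comparison — identifying the product with the one-edge grafting through the frozen legs, and reducing associativity (and the module axiom) to the order-independence of $\circ_{\Gamma}$ for the three-vertex path tree. The arity counts, equivariance, and the bookkeeping of which frozen legs survive are all handled correctly.
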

\begin{proof}
	A direct comparison of the associativity relations in a dioperad with those for 2-coloured twisted associative algebras.
\end{proof}

\subsection{A sufficient condition of Koszul property for dioperads}
\label{sec::Tw::diop}	
	Suppose that a dioperad \(\calP := \calF(E; R)\) is generated by a collection \(E\) subject to quadratic relations \(R\). It is not always the case that the corresponding twisted associative algebra \(\partial_{\cin} \partial_{\cout} \cP\) is generated by \(\partial_{\cin} \partial_{\cout} E\) and remains quadratic (for example, if $E(0,n)$ differ from zero).
However, a criterion for when this holds in the case of operads was given in~\cite{Khor::Univ_Envlop}, using the language of Gr\"obner bases for operads. 
	The analogous criterion for dioperads could be formulated, but we leave this for future work, where we will develop the theory of Gr\"obner bases for dioperads. For the main examples of properads considered in this paper, this is indeed the case.

\begin{theorem}
\label{thm::tw::diop}	
	Let \(\cP\) be a quadratic dioperad generated by the \(\bS \times \bS\)-collection \(E\), with \(E(m,n) = 0\) for either $m=0$ or $n=0$. Suppose the corresponding \(2\)-coloured twisted associative algebra \(\partial_{\cin}\partial_{\cout}\cP\) is quadratic, generated by \(\partial_{\cin}\partial_{\cout}E\), and moreover is Koszul (as a twisted associative algebra). Then \(\cP\) is a Koszul dioperad.
\end{theorem}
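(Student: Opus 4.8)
The plan is to deduce the Koszulness of $\cP$ from that of $\cA := \partial_{\cin}\partial_{\cout}\cP$ by comparing their (co)bar complexes through a filtration that isolates a distinguished \emph{spine} inside each tree. Recall that Koszulness of $\cP$ is equivalent to the acyclicity, outside the minimal homological degree, of the dioperadic bar complex $\mathsf{B}\cP$, whose underlying space is spanned by directed trees with vertices decorated by $s\overline{\cP}$ and whose differential merges two vertices adjacent along an edge via the dioperadic composition. The hypothesis $E(m,0)=E(0,n)=0$ guarantees that every decorating element, and hence every vertex of every such tree, has at least one incoming and one outgoing leg; this is exactly what is needed for the freezing operation $\partial_{\cin}\partial_{\cout}$ to be defined at each vertex and for the spine construction below to make sense.

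First I would fix, once and for all, a total order on the input leaves and on the set of outgoing edges at each vertex. Given a decorated tree, this singles out a canonical directed path --- the \emph{spine} --- obtained by starting at the smallest input leaf and, at each vertex, leaving through the smallest outgoing edge until an output leaf is reached. The vertices lying on the spine, each carrying a frozen incoming and a frozen outgoing leg (the two spine edges at that vertex), form a chain; by the Lemma preceding the theorem this chain is precisely an element of an iterated monoidal product $\overline{\cA}^{\diamond k}$, i.e.\ a generator of the twisted associative bar complex $\mathsf{B}_{\tw}(\cA)$. The remaining, off-spine vertices sit on subtrees grafted onto the non-frozen legs of the spine.

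Next I would filter $\mathsf{B}\cP$ by the number of off-spine vertices. In the associated graded the only surviving part of the differential is the one that merges two \emph{consecutive spine vertices} --- merges along off-spine edges raise the off-spine count and are therefore killed --- and this surviving differential is exactly the bar differential of $\cA$ acting along the spine, with the grafted subtrees carried along as inert decorations. Consequently the associated graded complex splits, over the combinatorial type of the off-spine forest, into a direct sum of copies of $\mathsf{B}_{\tw}(\cA)$. Since $\cA$ is Koszul by hypothesis, each such factor has homology concentrated on $\cA^{\antish}$, and passing to the next page replaces every spine by its quadratic-dual avatar. Iterating the construction --- peeling off the spine of the smallest remaining input leaf, then the next, which is the ``sequence of filtrations'' announced in the introduction --- collapses all of $\mathsf{B}\cP$ onto the sub-codioperad generated by the quadratic dual data, that is, onto $\cP^{\antish}$. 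Because in each fixed arity and weight only finitely many trees occur and the filtrations exhaust the complex, all the spectral sequences converge and degenerate at the indicated page, which yields the Koszulness of $\cP$.

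The main obstacle I anticipate is the step where one identifies the associated graded differential with the $\cA$-bar differential \emph{on the nose}. One must check that the canonical spine is compatible with the merging differential --- that merging two consecutive spine vertices produces a tree whose canonical spine is the expected contracted one, so that no correction terms survive --- and that the grafted subtrees recombine correctly under this merge. This works precisely because the product of $\cA$ is induced by the dioperadic composition through the frozen legs, but reconciling it with the symmetric-group actions and the chosen orderings is the delicate, bookkeeping-heavy part of the argument.
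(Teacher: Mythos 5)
Your overall strategy --- isolating a distinguished directed chain in each tree, identifying it with a word in $\cA=\partial_{\cin}\partial_{\cout}\cP$, and filtering so that the bar differential of this twisted associative algebra survives in the associated graded --- is indeed the paper's strategy, but two of your concrete choices leave genuine gaps. First, your spine is not canonical. The paper begins with a separate filtration by the $1$-reachable subtree $T_{\oner}(\Gamma)$ (the vertices admitting a directed path from the input labelled $1$), which reduces the problem to the subcomplex $\mathsf{B}_{\di}^{\oner}(\cP)$ spanned by trees all of whose vertices are $1$-reachable; only on such trees is there a \emph{unique} directed path $P_{\oto}(T)$ from input $1$ to output $1$, and this uniqueness is exactly what makes the path compatible with edge contractions and with the symmetric group actions for free. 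Your rule ``leave through the smallest outgoing edge'' requires ordering internal edges, which is not part of the data, is not $\bS$-equivariant, and is not stable under contraction (after merging two vertices the smallest outgoing edge of the merged vertex may point elsewhere and reroute the spine). You flag this as delicate bookkeeping, but without the preliminary reduction to $1$-reachable trees there is no canonical spine at all, so this first filtration is a missing ingredient rather than a detail.

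Second, and more seriously, filtering by the number of off-spine vertices kills too much of the differential. Contracting an edge never increases the number of vertices, so in your associated graded only the spine contractions survive and the off-spine subtrees become \emph{inert}: the associated graded is $\mathsf{B}_{\tw}(\cA)$ tensored with a zero-differential space spanned by decorated forests sitting in all syzygy degrees. Its homology is therefore \emph{not} concentrated in the minimal degree, the spectral sequence does not degenerate after one page, and controlling the higher differentials (which mix the spine with the forest via the spine-to-forest contractions) is essentially the whole remaining difficulty; ``iterating the construction'' does not address it. The paper avoids this by filtering by $\#\cin(P_{\oto}(T))+\#\cout(P_{\oto}(T))$: this kills only the contractions of edges \emph{leaving} the path, so the associated graded retains the internal differential of every complement component and factors, via Maschke's theorem, as $\partial_{\cout}^{k}\bigl(\mathsf{B}_{\tw}(\partial_{\cin}\partial_{\cout}\cP)\bigr)\otimes_{\bS_k}\bigl(\mathsf{B}_{\di}^{\oner}(\cP)\bigr)^{\otimes k}$, with the complement factors handled by induction on the number of inputs and outputs. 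Both corrections --- the $1$-reachability reduction and a filtration that keeps the complement's differential alive --- are needed for your argument to close.
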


\begin{proof}
	We will show that the bar complex \(\mathsf{B}_{\di}(\cP)\) is quasi-isomorphic to the codiperad \(\cP_{\di}^{\antish}\). In other words, we aim to demonstrate that the homology of \(\mathsf{B}_{\di}(\cP)\) is concentrated in syzygy degree $0$ (the minimal homological degree). This is equivalent to showing that all cycles can be represented by sums of trees, where all operations at the vertices are elements of \(E\), and not compositions.
	
	The proof proceeds via a series of filtrations, which we explain below. Using inductive arguments on the total loop ordered (defined in~\ref{def::loop:order}), we can show that, with respect to the associative graded differential, all cycles take the aforementioned form.
	
	\begin{center}
	\textbf{Filtration by 1-reachable vertices:}
	\end{center}
	Each monomial in the quasi-free codioperad \(\mathsf{B}_{\di}(\cP)\) corresponds to a dioperadic tree, where each vertex is assigned an operation with the corresponding valency from \(\cP\). As a vector space, it is a direct sum of vector spaces indexed by dioperadic trees.
	For each dioperadic tree \(\Gamma\), we associate its subtree \(T_{\oner}(\Gamma)\), which is spanned by the 1-reachable vertices. Specifically, \(T_{\oner}(\Gamma)\) consists of vertices that admit a directed path from the input labelled by \(1\); in particular, it has a root labeled by input \(1\).
	
	Here is a pictorial example, where we circle the $1$-reachable subtree:
\begin{equation}
\label{eq::1::reach::tree}	
\begin{array}{ccc}
	\begin{tikzpicture}[scale=0.35,yscale=-1]
		\node (v00) at (-7,-5) {\tiny{3}};
		\node (v01) at (-5,-5) {\tiny{4}};
		\node (v02) at (-3,-5) {\tiny{$\textcircled{1}$}};
		\node (v03) at (-1,-5) {\tiny{2}};
		\node (v04) at (1,-4) {\tiny{5}};
		\node (v05) at (3,-4) {\tiny{6}};
		\node[int] (v10) at (-6,-3) {};
		\node[int] (v11) at (-2,-2) {};
		\node (v12) at (0,-2.5) {\tiny{8}};
		\node[int] (v13) at (2,-2) {};
		\node (v14) at (4,-2) {\tiny{7}};
		\draw[-triangle 60] (v00) edge (v10);
		\draw[-triangle 60] (v01) edge (v10);
		\draw[-triangle 60] (v02) edge (v11);
		\draw[-triangle 60] (v03) edge (v11);
		\draw[-triangle 60] (v04) edge (v13);
		\draw[-triangle 60] (v05) edge (v13);
		\node (v20) at (-7,1) {\tiny{1}};
		\node[int] (v21) at (-4,0) {};
		\node[int] (v22) at (0,0) {};
		\node[int] (v23) at (3,0) {};
		\draw[-triangle 60] (v10) edge (v20) edge (v21);
		\draw[-triangle 60] (v11) edge (v21) edge (v22);
		\draw[-triangle 60] (v12) edge (v22);
		\draw[-triangle 60] (v13) edge (v22) edge (v23);
		\draw[-triangle 60] (v14) edge (v23);
		\node (v30) at (-5,3) {\tiny{3}};
		\node (v31a) at (-3-.5,3-.5) {\tiny{{5}}};
		\node (v31b) at (-3+.5,3-.5) {\tiny{{6}}};
		\node[int] (v32) at (2,2) {};
		\node (v33) at (4.5,3) {\tiny{7}};
		\draw[-triangle 60] (v21) edge (v30) edge (v31a);
		\draw[-triangle 60] (v22) edge (v31b) edge (v32);
		\draw[-triangle 60] (v23) edge (v33);
		%	\node (v40) at (-4,5) {\tiny{5}};
		%	\node (v41) at (0,3.5) {\tiny{5}};
		%	\node (v42) at (3,5) {\tiny{8}};
		%	\node (v50) at (-2,7) {\tiny{2}};
		%	\node (v51) at (0,7) {\tiny{4}};
		%	\node (v52) at (2,7) {\tiny{6}};
		%	\draw[-triangle 60] (v31) edge (v40) edge (v41);
		%	\draw[-triangle 60] (v41) edge (v50) edge (v51) edge (v52);
		\node (v41a) at (0-.5,3) {\tiny{{2}}};
		\node (v41b) at (0+.5,4-.5) {\tiny{{4}}};
		\node (v42) at (3,3.5) {\tiny{8}};
		\draw[-triangle 60] (v22)  edge (v41a);
		%\node (v50) at (-2,7) {\tiny{2}};
		%\node (v51) at (0,7) {\tiny{4}};
		%\node (v52) at (2,7) {\tiny{6}};
		%\draw[-triangle 60] (v31) edge (v40) edge (v41);
		\draw[-triangle 60] (v32) edge (v41b) edge (v42);
		\draw[-triangle 60] (v22)  edge (v41a);
		%\draw[-triangle 60] (v41) edge (v50) edge (v51) edge (v52);
		\draw (-2,-4) [rounded corners = 10pt,dotted] -- (-6,2) -- (-5,4)  -- (2,4.5) -- (4,3.5) -- (-2,-4); 
	\end{tikzpicture}
	& & 
	\begin{tikzpicture}[scale=0.35,yscale=-1]
		%	\node (v00) at (-7,-5) {\tiny{2}};
		%	\node (v01) at (-5,-5) {\tiny{4}};
		\node (v02) at (-3,-5) {\tiny{$\textcircled{1}$}};
		\node (v03) at (-1,-5) {\tiny{2}};
		%	\node (v04) at (1,-4) {\tiny{5}};
		%	\node (v05) at (3,-4) {\tiny{6}};
		\node (v10) at (-6,-3) {};
		\node[int] (v11) at (-2,-2) {};
		\node (v12) at (0,-2.5) {\tiny{8}};
		\node (v13) at (2,-2) {};
		%	\node (v14) at (4,-2) {\tiny{7}};
		%	\draw[-triangle 60] (v00) edge (v10);
		%	\draw[-triangle 60] (v01) edge (v10);
		\draw[-triangle 60] (v02) edge (v11);
		\draw[-triangle 60] (v03) edge (v11);
		%	\draw[-triangle 60] (v04) edge (v13);
		%	\draw[-triangle 60] (v05) edge (v13);
		%	\node (v20) at (-7,1) {\tiny{1}};
		\node[int] (v21) at (-4,0) {};
		\node[int] (v22) at (0,0) {};
		%	\node[int] (v23) at (3,0) {};
		\draw[-triangle 60] (v10) edge (v21);
		\draw[-triangle 60] (v11) edge (v21) edge (v22);
		\draw[-triangle 60] (v12) edge (v22);
		\draw[-triangle 60] (v13) edge (v22);
		%	\draw[-triangle 60] (v14) edge (v23);
		\node (v30) at (-5,3) {\tiny{3}};
		\node (v31a) at (-3-.5,3-.5) {\tiny{{5}}};
		\node (v31b) at (-3+.5,3-.5) {\tiny{{6}}};
		\node[int] (v32) at (2,2) {};
		%	\node (v33) at (4.5,3) {\tiny{7}};
		\draw[-triangle 60] (v21) edge (v30) edge (v31a);
		\draw[-triangle 60] (v22) edge (v31b) edge (v32);
		%	\draw[-triangle 60] (v23) edge (v33);
		%	\node (v40) at (-4,5) {\tiny{5}};
		%	\node (v41) at (0,3.5) {\tiny{5}};
		%	\node (v42) at (3,5) {\tiny{8}};
		%	\node (v50) at (-2,7) {\tiny{2}};
		%	\node (v51) at (0,7) {\tiny{4}};
		%	\node (v52) at (2,7) {\tiny{6}};
		%	\draw[-triangle 60] (v31) edge (v40) edge (v41);
		%	\draw[-triangle 60] (v41) edge (v50) edge (v51) edge (v52);
		\node (v41a) at (0-.5,3) {\tiny{{2}}};
		\node (v41b) at (0+.5,4-.5) {\tiny{{4}}};
		\node (v42) at (3,3.5) {\tiny{8}};
		\draw[-triangle 60] (v22)  edge (v41a);
		%\node (v50) at (-2,7) {\tiny{2}};
		%\node (v51) at (0,7) {\tiny{4}};
		%\node (v52) at (2,7) {\tiny{6}};
		%\draw[-triangle 60] (v31) edge (v40) edge (v41);
		\draw[-triangle 60] (v32) edge (v41b) edge (v42);
		\draw[-triangle 60] (v22)  edge (v41a);
		%\draw[-triangle 60] (v41) edge (v50) edge (v51) edge (v52);
		\draw (-2,-4) [rounded corners = 10pt,dotted] -- (-6,2) -- (-5,4)  -- (2,4.5) -- (4,3.5) -- (-2,-4); 
	\end{tikzpicture}
	\\
	\text{ A dioperadic tree $\Gamma$} & &
	\text{ $1$-reachable subtree $T_{\oner}(\Gamma)$}
\end{array}
\end{equation}
	
	Note that the number of inputs and outputs in a 1-reachable subtree \(T_{\oner}(\Gamma)\) is less than or equal to the number of inputs and outputs in the corresponding subtree \(T_{\oner}(\Gamma/e)\), obtained from \(\Gamma\) by contracting an internal edge \(e\).
	Thus, we have the following decreasing filtration of the bar complex:
	$$
	\euF_{\oner}^{k}(\mathsf{B}_{\di}(\cP)) := \bigoplus_{\Gamma : \left(\#\cin(T_{\oner}(\Gamma)) + \#\cout(T_{\oner}(\Gamma))\right) \geq k} \mathsf{B}_{\di}(\cP)(\Gamma).
	$$
	
	The associated graded differential either contracts an inner edge inside the 1-reachable subtree or contracts an inner edge of the complement \(\Gamma \setminus T_{\oner}(\Gamma)\). However, the edges incoming to \(T_{\oner}(\Gamma)\) may not be contracted. In particular, the number of connected components of the complement \(\Gamma \setminus T_{\oner}(\Gamma)\) is fixed, and each connected component is linked to the 1-reachable subtree via one of its inputs. By ordering these inputs from \(1\) to \(k\), we get an isomorphism of complexes:
	\begin{equation}
	\label{eq::diop::1reach}
	\gr\euF_{\oner}^{\udot}\bigl(\mathsf{B}_{\di}(\cP)\bigr) \simeq \bigoplus_{k} 
	\Bigl( \partial_{\cin}^{k} \bigl(\mathsf{B}_{\di}^{\oner}(\cP)\bigr) \Bigr)
	\bigotimes_{\bS_k}
	\Bigl( \partial_{\cout}\bigl(\mathsf{B}_{\di}(\cP)\bigr) \Bigr)^{\otimes k}.
	\end{equation}
	Here, \(\mathsf{B}_{\di}^{\oner}(\cP)\) is the subcomplex of the bar complex spanned by 1-reachable dioperadic trees. The derivatives \(\partial_{\cin}\) and \(\partial_{\cout}\) correspond to marking special inputs/outputs and preserve quasi-isomorphisms. 
	
	Note that the number of inputs and outputs in the tensor factors on the right-hand side of this equation is strictly less than the number on the left-hand side, and by the inductive assumption, we may assume that they are Koszul.
	
	Thanks to Maschke's theorem, we now only need to show that the subcomplex \(\mathsf{B}_{\di}^{\oner}(\cP)\) is Koszul (i.e., has non-zero homology only in the minimal homological degrees). This is explained by considering another filtration described below.
	
\begin{center}
	\textbf{Filtration by \((1 \to 1)\)-paths:}
\end{center}	
	Suppose \(T\) is a 1-reachable dioperadic tree. For any output \(k\), there exists a unique directed path starting from the input labelled by \(1\) and ending at output \(k\). Let \(P_{\oto}(T)\) denote this path connecting the first input and the first output.
	
	Here is a pictorial example where as before we circle the desired subgraph:

\begin{equation}
\label{eq::1:1::path}	
\begin{array}{ccc}
	\begin{tikzpicture}[scale=0.35,yscale=-1]
		%	\node (v00) at (-7,-5) {\tiny{2}};
		%	\node (v01) at (-5,-5) {\tiny{4}};
		\node (v02) at (-3,-5) {\tiny{$\textcircled{1}$}};
		\node (v03) at (-1,-5) {\tiny{3}};
		%	\node (v04) at (1,-4) {\tiny{5}};
		%	\node (v05) at (3,-4) {\tiny{6}};
		\node (v10) at (-6,-3) {\tiny{2}};
		\node[int] (v11) at (-2,-2) {};
		\node (v12) at (0,-2.5) {\tiny{5}};
		\node (v13) at (2,-2) {\tiny{4}};
		%	\node (v14) at (4,-2) {\tiny{7}};
		%	\draw[-triangle 60] (v00) edge (v10);
		%	\draw[-triangle 60] (v01) edge (v10);
		\draw[-triangle 60, red] (v02) edge (v11);
		\draw[-triangle 60] (v03) edge (v11);
		%	\draw[-triangle 60] (v04) edge (v13);
		%	\draw[-triangle 60] (v05) edge (v13);
		%	\node (v20) at (-7,1) {\tiny{1}};
		\node[int] (v21) at (-4,0) {};
		\node[int] (v22) at (0,0) {};
		%	\node[int] (v23) at (3,0) {};
		\draw[-triangle 60] (v10) edge (v21);
		\draw[-triangle 60] (v11) edge (v21);
		\draw[-triangle 60, red] (v11) edge (v22);
		\draw[-triangle 60] (v12) edge (v22);
		\draw[-triangle 60] (v13) edge (v22);
		%	\draw[-triangle 60] (v14) edge (v23);
		\node (v30) at (-5,3) {\tiny{3}};
		\node (v31a) at (-3-.5,3-.5) {\tiny{{5}}};
		\node (v31b) at (-3+.5,3-.5) {\tiny{{6}}};
		\node[int] (v32) at (2,2) {};
		%	\node (v33) at (4.5,3) {\tiny{7}};
		\draw[-triangle 60] (v21) edge (v30) edge (v31a);
		\draw[-triangle 60 ] (v22) edge (v31b);
		\draw[-triangle 60] (v22) edge (v32);
		%	\draw[-triangle 60] (v23) edge (v33);
		%	\node (v40) at (-4,5) {\tiny{5}};
		%	\node (v41) at (0,3.5) {\tiny{5}};
		%	\node (v42) at (3,5) {\tiny{8}};
		%	\node (v50) at (-2,7) {\tiny{2}};
		%	\node (v51) at (0,7) {\tiny{4}};
		%	\node (v52) at (2,7) {\tiny{6}};
		%	\draw[-triangle 60] (v31) edge (v40) edge (v41);
		%	\draw[-triangle 60] (v41) edge (v50) edge (v51) edge (v52);
		\node (v41a) at (0-.5,3) {\tiny{\textcircled{1}}};
		\node (v41b) at (0+.5,4-.5) {\tiny{{4}}};
		\node (v42) at (3,3.5) {\tiny{2}};
		\draw[-triangle 60, red] (v22)  edge (v41a);
		%\node (v50) at (-2,7) {\tiny{2}};
		%\node (v51) at (0,7) {\tiny{4}};
		%\node (v52) at (2,7) {\tiny{6}};
		%\draw[-triangle 60] (v31) edge (v40) edge (v41);
		\draw[-triangle 60] (v32) edge (v41b) edge (v42);
		%\draw[-triangle 60] (v41) edge (v50) edge (v51) edge (v52);
		\draw[rounded corners = 10pt,dotted] (-4,-5.5) -- (-1.5,2) -- (-2,4.5)  -- (0,4.5) -- (1.5,-1) -- (-2,-5.5) -- (-4,-5.5); 
	\end{tikzpicture}
	&&
	\begin{tikzpicture}[scale=0.35,yscale=-1]
		%	\node (v00) at (-7,-5) {\tiny{2}};
		%	\node (v01) at (-5,-5) {\tiny{4}};
		\node (v02) at (-3,-5) {\tiny{$\textcircled{1}$}};
		\node (v03) at (-1,-5) {\tiny{3}};
		%	\node (v04) at (1,-4) {\tiny{5}};
		%	\node (v05) at (3,-4) {\tiny{6}};
		%	\node (v10) at (-6,-3) {\tiny{2}};
		\node[int] (v11) at (-2,-2) {};
		\node (v12) at (0,-2.5) {\tiny{5}};
		\node (v13) at (2,-2) {\tiny{4}};
		%	\node (v14) at (4,-2) {\tiny{7}};
		%	\draw[-triangle 60] (v00) edge (v10);
		%	\draw[-triangle 60] (v01) edge (v10);
		\draw[-triangle 60, red, dotted] (v02) edge (v11);
		\draw[-triangle 60] (v03) edge (v11);
		%	\draw[-triangle 60] (v04) edge (v13);
		%	\draw[-triangle 60] (v05) edge (v13);
		%	\node (v20) at (-7,1) {\tiny{1}};
		%	\node[int] (v21) at (-4,0) {};
		\node[int] (v22) at (0,0) {};
		%	\node[int] (v23) at (3,0) {};
		%	\draw[-triangle 60] (v10) edge (v21);
		\draw[-triangle 60] (v11) edge (v21);
		\draw[-triangle 60, red, dotted] (v11) edge (v22);
		\draw[-triangle 60] (v12) edge (v22);
		\draw[-triangle 60] (v13) edge (v22);
		%	\draw[-triangle 60] (v14) edge (v23);
		%	\node (v30) at (-5,3) {\tiny{3}};
		%	\node (v31a) at (-3-.5,3-.5) {\tiny{{5}}};
		\node (v31b) at (-3+.5,3-.5) {\tiny{{6}}};
		\node (v32) at (2,2) {};
		%	\node (v33) at (4.5,3) {\tiny{7}};
		%	\draw[-triangle 60] (v21) edge (v30) edge (v31a);
		\draw[-triangle 60 ] (v22) edge (v31b);
		\draw[-triangle 60] (v22) edge (v32);
		%	\draw[-triangle 60] (v23) edge (v33);
		%	\node (v40) at (-4,5) {\tiny{5}};
		%	\node (v41) at (0,3.5) {\tiny{5}};
		%	\node (v42) at (3,5) {\tiny{8}};
		%	\node (v50) at (-2,7) {\tiny{2}};
		%	\node (v51) at (0,7) {\tiny{4}};
		%	\node (v52) at (2,7) {\tiny{6}};
		%	\draw[-triangle 60] (v31) edge (v40) edge (v41);
		%	\draw[-triangle 60] (v41) edge (v50) edge (v51) edge (v52);
		\node (v41a) at (0-.5,3) {\tiny{\textcircled{1}}};
		%	\node (v41b) at (0+.5,4-.5) {\tiny{{4}}};
		%	\node (v42) at (3,3.5) {\tiny{2}};
		\draw[-triangle 60, red, dotted] (v22)  edge (v41a);
		%\node (v50) at (-2,7) {\tiny{2}};
		%\node (v51) at (0,7) {\tiny{4}};
		%\node (v52) at (2,7) {\tiny{6}};
		%\draw[-triangle 60] (v31) edge (v40) edge (v41);
		%	\draw[-triangle 60] (v32) edge (v41b) edge (v42);
		%\draw[-triangle 60] (v41) edge (v50) edge (v51) edge (v52);
		\draw[rounded corners = 10pt,dotted] (-4,-5.5) -- (-1.5,2) -- (-2,4.5)  -- (0,4.5) -- (1.5,-1) -- (-2,-5.5) -- (-4,-5.5); 
	\end{tikzpicture} \\
	\text{ $1$-reachable tree $T$ } &\quad &
	\text{ Directed path $P_{\oto}(T)$ from $1$ to $1$ }
\end{array}
\end{equation}
	
	Consider the filtration of \(\mathsf{B}_{\di}^{\oner}(\cP)\) by the total number of inputs and outputs in the path \(P_{\oto}(T)\):
	$$
	\euF_{\oto}^{k}(\mathsf{B}_{\di}^{\oner}(\cP)) := \bigoplus_{T : \left(\#\cin(P_{\oto}(T)) + \#\cout(P_{\oto}(T))\right) \geq k} \mathsf{B}_{\di}^{\oner}(\cP)(T).
	$$
	
	The associated graded differential is a sum of terms obtained by contracting an internal edge in the path \(P_{\oto}(T)\) and contracting an internal edge in the complement \(T \setminus P_{\oto}(T)\), while terms corresponding to contractions of edges outgoing from the path \(P_{\oto}(T)\) disappear. Similar to \eqref{eq::diop::1reach}, we have the following description of the associated graded complex:
\begin{equation}
	\label{eq::diop::1to1}
	\gr\euF_{\oto}^{\udot}\bigl(\mathsf{B}_{\di}^{\oner}(\cP)\bigr) \simeq \bigoplus_{k} 
	\Bigl( \partial_{\cout}^{k} \bigl(\mathsf{B}_{\tw}(\partial_{\cin}\partial_{\cout}\cP)\bigr) \Bigr)
	\bigotimes_{\bS_k}
	\Bigl( \bigl(\mathsf{B}_{\di}^{\oner}(\cP)\bigr) \Bigr)^{\otimes k}.
\end{equation}
		
Indeed, the first factor corresponds to the path \(P_{\oto}(T)\), where the differential is the contraction of an internal edge, and thus it coincides with the differential in the bar complex of the twisted associative algebra \(\partial_{\cin}\partial_{\cout}\cP\) (see Definition~\ref{def::Bar::Twisted}).

	On the other hand, each connected component of the complement \(T \setminus P_{\oto}(T)\) is a rooted directed tree, where the root is connected to the unique directed path coming from the first input. Hence, it is isomorphic to a 1-reachable dioperadic tree with a smaller number of inputs and outputs.
	
	Therefore, on the right-hand side of~\eqref{eq::diop::1to1}, the first factor is Koszul by assumption, the other tensor factors corresponding to connected components are Koszul by inductive arguments, and the tensor product over the symmetric group has homology in minimal degrees due to Maschke's theorem.	
\end{proof}

Note that in the proof of Theorem~\ref{thm::tw::diop} we introduced the subcomplex
$\mathsf{B}_{\di}^{\oner}(\calD)$ of the bar complex $\mathsf{B}_{\di}(\calD)$,
spanned by dioperadic trees whose vertices are all $1$-reachable.
We conclude this section with a technical definition that slightly generalizes this collection of complexes and will play a crucial role in the subsequent Section~\S\ref{sec::properad::envelope}.

\begin{definition}
	For a (co)dioperad $\calD$, the $1\ttt2$-reachable bar complex
	$\mathsf{B}_{\di}^{\onetoR}(\calD)$ is the subcomplex of $\mathsf{B}_{\di}(\calD)$
	spanned by dioperadic trees $T$ with at least two inputs such that all vertices of $T$
	are both $1$- and $2$-reachable, where $1$ and $2$ denote the labels of the first two
	inputs in the ordered list of inputs of $T$.
\end{definition}
For example, in Picture~\eqref{eq::1::reach::tree} the dioperadic tree $\Gamma$ is not $1\ttt2$-reachable and its $1$-reachable subtree is.
While on Picture~\ref{eq::1:1::path} both dioperadic trees are not $1\ttt2$-reachable.

\begin{proposition}
	\label{prp::2tw::diop}
	Let \(\cP\) be a quadratic dioperad generated by the
	\(\bS \times \bS\)-collection \(E\), with \(E(m,n) = 0\) for either $m=0$ or $n=0$.
	Suppose that
	\begin{itemize}[itemsep=0pt,topsep=0pt]
		\item the corresponding \(2\)-coloured twisted associative algebra
		\(\partial_{\cin}\partial_{\cout}\cP\) is quadratic, generated by
		\(\partial_{\cin}\partial_{\cout}E\), and is Koszul;
		\item the right module $\partial_{\cin}^2\partial_{\cout}\calD$
		over the twisted associative algebra
		$\partial_{\cin}\partial_{\cout}\calD$ is quadratic,
		generated by $\partial_{\cin}^2\partial_{\cout}\cE$, and Koszul.
	\end{itemize}
	Then the homology of the $1\ttt2$-reachable bar complex
	$\mathsf{B}_{\di}^{\onetoR}(\calD)$ is represented by elements of $\calF(E)$
	and, in particular, is concentrated in the minimal homological degree.
\end{proposition}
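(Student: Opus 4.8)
The plan is to adapt the second filtration from the proof of Theorem~\ref{thm::tw::diop} --- the \emph{filtration by $(1\to 1)$-paths} --- replacing the twisted associative algebra bar complex by the \emph{module} bar complex $\mathsf{B}_{\tw}(\partial_{\cin}\partial_{\cout}\cP,\,\partial_{\cin}^2\partial_{\cout}\cP)$.

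The starting point is a structural observation about $1\ttt2$-reachable trees. Let $T$ be a dioperadic tree all of whose vertices are both $1$- and $2$-reachable, and let $a$ and $b$ be the vertices carrying the first two input legs. Since $a$ is $2$-reachable there is a directed path $b\rightsquigarrow a$, and since $b$ is $1$-reachable there is a directed path $a\rightsquigarrow b$; acyclicity of $T$ forces $a=b=:v_0$. Consequently $v_0$ is the unique source of $T$: every vertex, being $1$-reachable, lies downstream of $v_0$. Thus a $1\ttt2$-reachable tree is precisely a tree with a distinguished bottom vertex $v_0$ receiving both frozen inputs $1$ and $2$, all remaining vertices being reachable from $v_0$. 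Denote by $P(T)$ the unique directed path from $v_0$ to the first output. Its bottom vertex $v_0$ carries two frozen inputs and one frozen output along $P(T)$, hence is an element of the module $\partial_{\cin}^2\partial_{\cout}\cP$; every higher vertex of $P(T)$ carries one frozen input and one frozen output, hence is an element of the algebra $\partial_{\cin}\partial_{\cout}\cP$. Each connected component of $T\setminus P(T)$ is attached to $P(T)$ by a single edge oriented away from the path, and is therefore a $1$-reachable dioperadic tree rooted at that edge.

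With this picture in hand I would filter $\mathsf{B}_{\di}^{\onetoR}(\calD)$ by the total number of inputs and outputs of the path $P(T)$, exactly as in~\eqref{eq::diop::1to1}. On the associated graded, the differential contracts internal edges inside $P(T)$ or inside the complement, while contractions of edges leaving $P(T)$ disappear; the contractions inside $P(T)$ are precisely the module-bar differential $d_{\tw}$ with the module element sitting at $v_0$. Ordering the $k$ branching points along the path then yields an isomorphism of complexes
\begin{equation*}
\gr\euF^{\udot}\bigl(\mathsf{B}_{\di}^{\onetoR}(\calD)\bigr)
\;\simeq\;
\bigoplus_{k}
\Bigl(\partial_{\cout}^{k}\,\mathsf{B}_{\tw}\bigl(\partial_{\cin}\partial_{\cout}\cP,\ \partial_{\cin}^2\partial_{\cout}\cP\bigr)\Bigr)
\bigotimes_{\bS_k}
\Bigl(\mathsf{B}_{\di}^{\oner}(\cP)\Bigr)^{\otimes k}.
\end{equation*}

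To conclude, I would invoke the three inputs in turn. By hypothesis the module $\partial_{\cin}^2\partial_{\cout}\cP$ is Koszul, so the first factor has homology concentrated in the minimal degree, represented by the generators $\partial_{\cin}^2\partial_{\cout}\cE$; the factors $\mathsf{B}_{\di}^{\oner}(\cP)$ are Koszul by the proof of Theorem~\ref{thm::tw::diop}, with homology represented by trees built from $E$; and Maschke's theorem ensures the coinvariants $\bigotimes_{\bS_k}$ preserve concentration in the minimal degree. Assembling the factors, the homology of $\mathsf{B}_{\di}^{\onetoR}(\calD)$ is then represented by trees all of whose vertices are generators, i.e.\ by elements of $\calF(E)$, and in particular is concentrated in the minimal homological degree. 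I expect the main obstacle to be the careful identification of the associated graded with the module bar complex: namely, matching the right $\partial_{\cin}\partial_{\cout}\cP$-action (the stacking of path vertices above $v_0$) with the differential $d_{\tw}$ of $\mathsf{B}_{\tw}(\partial_{\cin}\partial_{\cout}\cP,\partial_{\cin}^2\partial_{\cout}\cP)$, and checking that the branches glue back in as genuinely $1$-reachable trees so that the already-established Koszulness of $\mathsf{B}_{\di}^{\oner}(\cP)$ applies without any circular induction.
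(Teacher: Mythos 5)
Your proposal is correct and follows essentially the same route as the paper: the paper likewise filters $\mathsf{B}_{\di}^{\onetoR}(\cP)$ by the total number of inputs and outputs of the $\oto$-path and identifies the associated graded with $\bigoplus_{k}\bigl(\partial_{\cout}^{k}\mathsf{B}_{\tw}(\partial_{\cin}\partial_{\cout}\cP,\partial_{\cin}^2\partial_{\cout}\cP)\bigr)\otimes_{\bS_k}\bigl(\partial_{\cin}\mathsf{B}_{\di}^{\oner}(\cP)\bigr)^{\otimes k}$, concluding via the Koszulness of the module, the already-proved Koszulness of $\mathsf{B}_{\di}^{\oner}(\cP)$, and Maschke's theorem. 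Your preliminary observation that acyclicity forces the two marked inputs to sit on a single source vertex $v_0$ is a useful explicit justification of a point the paper leaves implicit.
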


\begin{proof}
	For a $12$-reachable tree $T$, we also define the $\oto$-path and introduce a filtration
	by the total number of inputs and outputs of this path:
	\[
	\euF_{\oto}^{k}(\mathsf{B}_{\di}^{\onetoR}(\cP))
	:=
	\bigoplus_{T :
		\left(\#\cin(P_{\oto}(T)) + \#\cout(P_{\oto}(T))\right) \geq k}
	\mathsf{B}_{\di}^{\onetoR}(\cP)(T).
	\]
	The associated graded complex admits the following decomposition:
	\begin{equation*}
		\label{eq::diop::2to1}
		\gr\euF_{\oto}^{\udot}\bigl(\mathsf{B}_{\di}^{\onetoR}(\cP)\bigr)
		\simeq
		\bigoplus_{k}
		\Bigl(
		\partial_{\cout}^{k}
		\bigl(
		\mathsf{B}_{\tw}(\partial_{\cin}\partial_{\cout}\cP,
		\partial_{\cin}^2\partial_{\cout}\cP)
		\bigr)
		\Bigr)
		\bigotimes_{\bS_k}
		\Bigl(
		\partial_{\cin}
		\bigl(\mathsf{B}_{\di}^{\oner}(\cP)\bigr)
		\Bigr)^{\otimes k}.
	\end{equation*}
	Here, the first tensor factor is the bar complex of the twisted associative algebra
	$\partial_{\cin}\partial_{\cout}\cP$ with coefficients in the right module
	$\partial_{\cin}^2\partial_{\cout}\cP$, and hence is Koszul by the assumptions of
	Proposition~\ref{prp::2tw::diop}.
	The Koszul property of the tensor powers of
	$\mathsf{B}_{\di}^{\oner}(\cP)$ follows from the proof of
	Theorem~\ref{thm::tw::diop}.
\end{proof}

\subsection{A sufficient condition for the Koszul property of the properadic envelope}
\label{sec::properad::envelope}

In what follows, we formulate a criterion for the Koszulness of properads, analogous to the one established earlier for dioperads and based on the associated twisted associative algebra.

\begin{theorem}
\label{thm::Prp::Tw}	
	Let $\calD$ be a quadratic dioperad generated by an $\bS^{\mathsf{op}}\times\bS$--collection $\cE$ such that $\cE(m,n)=0$ for either $m=0$ or $n=0$.
	Suppose that
	\begin{itemize}[itemsep=0pt,topsep=0pt]
		\item the associated $2$--coloured twisted associative algebra $\partial_{\cin}\partial_{\cout}\calD$ is quadratic, generated by $\partial_{\cin}\partial_{\cout}\cE$, and Koszul;
		\item the right module $\partial_{\cin}^2\partial_{\cout}\calD$ over the twisted associative algebra $\partial_{\cin}\partial_{\cout}\calD$ is quadratic, generated by $\partial_{\cin}^2\partial_{\cout}\cE$, and Koszul;
	\end{itemize}
	then the canonical projection
	\[
	\Omega_{\prop}^{\diamond}(\calD^{*})\twoheadrightarrow \mathsf{U}_{\prop}(\calD^{!})
	\]
	is a quasi-isomorphism.
	
	Consequently, by Theorem~\ref{thm::prp::envelope}, the properadic envelope $\mathsf{U}_{\prop}(\calD^{!})$ of the quadratic dual dioperad $\calD^{!}$ is Koszul, and $\calD^{!}$ is genus-contractible:
	\[
	\mathsf{U}_{\prop}^{\diam}(\calD) = \calD.
	\]
\end{theorem}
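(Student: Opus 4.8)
The plan is to prove the displayed quasi-isomorphism $\Omega_{\prop}^{\diamond}(\calD^{*})\twoheadrightarrow\mathsf{U}_{\prop}(\calD^{!})$ and then simply invoke Theorem~\ref{thm::prp::envelope}, which converts it into the Koszulness of $\mathsf{U}_{\prop}(\calD^{!})$ together with the genus-contractibility $\mathsf{U}_{\prop}^{\diam}(\calD)=\calD$. Applying the Lemma yielding~\eqref{eq::bar::cobar::diop} to $\calD^{!}$ (so that the relevant dual codioperad is $(\calD^{!})^{\antish}=\calD^{*}$), the degree-zero cohomology of the source already computes $\mathsf{U}_{\prop}(\calD^{!})$, so the entire content is the vanishing of higher cohomology; by bar--cobar duality and the bar counterpart of the triangle~\eqref{eq::bar::cobar::diop}, this is equivalent to showing that the embedding $\mathsf{U}_{\prop}(\calD^{!})\hookrightarrow\mathsf{B}^{\diamond}_{\prop}(\calD)$ is a quasi-isomorphism, i.e.\ that the bar complex $\mathsf{B}^{\diamond}_{\prop}(\calD)$ of $\mathsf{U}^{0}_{\prop}(\calD)$ has homology concentrated in the minimal (syzygy) degree. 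Since contraction of an internal edge leaves the first Betti number of a graph unchanged, this complex splits along the genus $g$ into a direct sum of subcomplexes; the genus-zero summand is exactly the dioperadic bar complex $\mathsf{B}_{\di}(\calD)$, whose homology is concentrated in minimal degree by Theorem~\ref{thm::tw::diop}. I would then induct on $g$.

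For the inductive step I reuse the \emph{filtration by $1$-reachable vertices} from the proof of Theorem~\ref{thm::tw::diop}. Marking the first input and passing to the $1$-reachable subgraph $S$, the associated graded differential may not contract the edges entering $S$, so it splits $S$ off from the connected components of the complement $\Gamma\setminus S$, which are graphs with strictly fewer inputs and outputs. The essential new phenomenon, absent in the tree case of~\eqref{eq::diop::1reach}, is that a complement component may now be glued to $S$ along \emph{several} of its inputs; each such extra gluing is precisely an undirected cycle straddling the boundary of $S$, and this is where the positive genus is carried. Consequently the naive analogue of~\eqref{eq::diop::1reach} no longer factors as a plain $\bS_{k}$-tensor of a $1$-reachable complex with hanging trees, and the reconvergences must be accounted for separately.

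To account for them I would localize at the reconvergences, the vertices reached from the inputs along two distinct directed paths, which are exactly the vertices detected by the $1\ttt2$-reachable condition. Refining the $\oto$-path filtration of~\eqref{eq::diop::1to1} so as to isolate the lowest reconvergence and, after relabelling its two incoming branches as the first two inputs, the $1\ttt2$-reachable block sitting above it, the associated graded complex factors (over the appropriate symmetric group, and up to hanging $1$-reachable trees) as a tensor product whose genus-carrying factor is a bar complex of type $\mathsf{B}_{\di}^{\onetoR}$. By Proposition~\ref{prp::2tw::diop} this factor has homology in the minimal degree --- this is exactly the point at which the hypothesis that the right module $\partial_{\cin}^{2}\partial_{\cout}\calD$ over $\partial_{\cin}\partial_{\cout}\calD$ is quadratic and Koszul is used. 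The complementary graph then has genus $g-1$ and is covered by the inductive hypothesis, while Maschke's theorem ensures that the $\bS_{k}$-tensor products introduce no homology away from the minimal degree, so the relevant spectral sequence degenerates on its second page.

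The hard part, and the only genuinely new difficulty compared with Theorem~\ref{thm::tw::diop} and Proposition~\ref{prp::2tw::diop}, is the combinatorial bookkeeping of the last two paragraphs: making precise that opening one reconvergence of a connected genus-$g$ graph factors the associated graded complex as a $1\ttt2$-reachable module-bar block tensored with a genus-$(g-1)$ remainder and hanging trees, and verifying that this factorization is compatible both with the bar differential and with the $\diamond$-restriction forbidding higher-genus quadratic (co)compositions (which is what makes only single-edge contractions survive and keeps the bookkeeping finite). Granting this, the genus induction shows that $\mathsf{B}^{\diamond}_{\prop}(\calD)$ has homology in the minimal degree; dualizing yields that the map~\eqref{eq::prop::diop} is a quasi-isomorphism for $\calD^{!}$, and Theorem~\ref{thm::prp::envelope} then gives at once that $\mathsf{U}_{\prop}(\calD^{!})$ is Koszul and that $\calD^{!}$ is genus-contractible, i.e.\ $\mathsf{U}_{\prop}^{\diam}(\calD)=\calD$.
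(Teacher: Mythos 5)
Your proposal follows the same overall route as the paper: dualize to the bar complex $\mathsf{B}_{\prop}^{\diamond}(\calD)$, reduce to showing its homology sits in minimal syzygy degree, start with the filtration by $1$-reachable vertices, feed the genus-carrying pieces into the $1\ttt 2$-reachable dioperadic bar complex $\mathsf{B}_{\di}^{\onetoR}(\calD)$ handled by Proposition~\ref{prp::2tw::diop} (this is indeed exactly where the Koszulness of the module $\partial_{\cin}^{2}\partial_{\cout}\calD$ enters), and close the induction on the loop order with Maschke's theorem. However, the step you explicitly defer as ``combinatorial bookkeeping'' is precisely where the paper's proof does its work, and your sketch of it is organized differently enough that it is not merely a formality. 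The paper does not localize at a ``lowest reconvergence''; instead it interposes two further reductions: (i) a filtration by the \emph{$1$-directly reachable subtree} $T_{\oneto}(\Gamma)$ (vertices admitting a \emph{unique} directed path from input $1$), whose associated graded splits as a genuinely dioperadic factor $\mathsf{B}_{\di}^{\oner}(\calD)$ tensored with complement components $\mathsf{B}_{\prop}^{\oneoneR}(\calD)$ characterized by every source vertex receiving at least two marked inputs --- it is in passing to this complement that the loop order strictly drops, which is what powers the induction; and (ii) a reduction from $\mathsf{B}_{\prop}^{\oneoneR}(\calD)$ to the single-source subcomplex $\mathsf{B}_{\prop}^{\onetoR}(\calD)$ by filtering on the number of source vertices and re-running the $1$-reachable filtration, after which a second application of the $T_{\oneto}$-filtration produces the $\mathsf{B}_{\di}^{\onetoR}(\calD)$ factor of~\eqref{eq::12:reach::subtree}. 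Note also that in the properadic $1$-reachable filtration the paper must use the corrected weight $\#\cin+\#\cout+2\#\cloop$ of $G_{\oner}(\Gamma)$ (see~\eqref{eq::filt::1:reach}), since contracting an edge can trade an input of the $1$-reachable subgraph for a loop --- a point your genus-splitting argument glosses over. So: right strategy and right ingredients, but the factorizations you grant are the actual content of the proof, and as sketched your ``open one reconvergence'' decomposition would still need to be replaced by (or reconciled with) the two-stage $T_{\oneto}$/source-count reduction to be airtight.
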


\begin{proof}
	The proof follows the same strategy as the one used in Theorem~\ref{thm::tw::diop}.
	More precisely, we introduce a sequence of filtrations on the (co)bar complex $\Omega_{\prop}^{\diamond}(\calD)$ and show that the associated spectral sequences degenerate at the first page.
Each filtration 'freezes' a certain collection of edges in a properadic graph. By cutting the graph along these edges, we can apply induction on the total loop order defined in~\ref{def::loop:order}.
By induction arguments, we prove that the homology of each associated graded complex is represented by cycles lying in the subspace spanned by properadic graphs coming from
\(
\calF^{c}(E) \subset \Omega^{\diamond}(\calD),
\)
and hence is concentrated in the minimal possible homological degree.

	We emphasize that we start with a dioperad $\calD$ and work with its \emph{small} (co)bar complex $\Omega^{\diamond}(\calD)$.
	In particular, the $\bS^{\mathsf{op}}\times\bS$--collection $\calD$ contains no elements of positive genus, and the differential in $\Omega_{\prop}^{\diamond}(\calD)$ is given by the vertex--splitting operation, which can create at most one new internal edge.
	(By contrast, in the larger cobar complex $\Omega_{\prop}(\mathsf{U}_{\prop}(\calD))$, the differential may create several new internal edges.)
	
	For our convenience, we work with the linear dual complex, namely the bar complex $\mathsf{B}_{\prop}^{\diamond}(\calD)$, instead of the cobar complex.
	Each monomial in the quasi-free coproperad $\mathsf{B}_{\prop}^{\diamond}(\calD)$ is represented by a properadic graph whose vertices are labeled by operations of $\calD$ of the corresponding valency.
	As a vector space, $\mathsf{B}_{\prop}^{\diamond}(\calD)$ is the direct sum of the spaces $\calD(\Gamma)$ indexed by properadic graphs $\Gamma$.
	Its differential is given by summing over all internal edges $e \in \mathsf{Edge}(\Gamma)$ such that the contracted graph $\Gamma/e$ is still properadic (that is, it does not contain a directed cycle).
	
	In general terms, the strategy employed below can be summarized as follows.
	We construct a sequence of compatible filtrations such that, after passing to the associated graded complexes (up to the action of the appropriate symmetric groups), certain edges in a properadic graph $\Gamma$ appearing in the bar complex $\mathsf{B}_{\prop}^{\diamond}(\calD)$ become frozen. (In other words, we omit terms of the differential that contracts the frozen edges).
	Cutting the graph along these "frozen" edges decomposes $\Gamma$ into connected components, each of which is either a linear string with one or two marked incoming edges and a fixed outgoing edge.
	
	On the level of complexes, this decomposition leads to a factorization into a tensor product (over the appropriate symmetric groups), where each tensor factor is either the bar complex of the twisted associative algebra $\partial_{\cin}\partial_{\cout}\calD$ with trivial coefficients, or the bar complex with coefficients in the right module $\partial_{\cin}^{2}\partial_{\cout}\calD$.
	By the assumptions of the theorem, each of these complexes has homology concentrated in the minimal homological degree.

\begin{center}
{\bf Filtration by $1$-reachable vertices:}
\end{center}
For each properadic graph $\Gamma$, we define the subgraph $G_{\oner}(\Gamma)$ spanned by the $1$-reachable vertices. 
That is, $G_{\oner}(\Gamma)$ consists of those vertices that admit a directed path starting at the input labeled by $1$; in particular, it has a distinguished root corresponding to the input $1$.

Here is a pictorial example:

\[
\begin{array}{ccc}
\begin{tikzpicture}[scale=0.35, yscale=-0.8]
	\node (v00) at (-7,-5) {\tiny{2}};
	\node (v01) at (-5,-5) {\tiny{4}};
	\node (v02) at (-3,-5) {\tiny{$\textcircled{1}$}};
	\node (v03) at (-1,-5) {\tiny{3}};
	\node (v04) at (1,-4) {\tiny{5}};
	\node (v05) at (3,-4) {\tiny{6}};
	\node[int] (v10) at (-6,-3) {};
	\node[int] (v11) at (-2,-2) {};
	\node (v12) at (0,-2.5) {\tiny{8}};
	\node[int] (v13) at (2,-2) {};
	\node (v14) at (4,-2) {\tiny{7}};
	\draw[-triangle 60] (v00) edge (v10);
	\draw[-triangle 60] (v01) edge (v10);
	\draw[-triangle 60] (v02) edge (v11);
	\draw[-triangle 60] (v03) edge (v11);
	\draw[-triangle 60] (v04) edge (v13);
	\draw[-triangle 60] (v05) edge (v13);
	\node (v20) at (-7,1) {\tiny{1}};
	\node[int] (v21) at (-4,0) {};
	\node[int] (v22) at (0,0) {};
	\node[int] (v23) at (3,0) {};
	\draw[-triangle 60] (v10) edge (v20) edge (v21);
	\draw[-triangle 60] (v11) edge (v21) edge (v22);
	\draw[-triangle 60] (v12) edge (v22);
	\draw[-triangle 60] (v13) edge (v22) edge (v23);
	\draw[-triangle 60] (v14) edge (v23);
	\node (v30) at (-5,3) {\tiny{3}};
	\node[int] (v31) at (-3,3) {};
	\node[int] (v32) at (2,2) {};
	\node (v33) at (4,3) {\tiny{7}};
	\draw[-triangle 60] (v21) edge (v30) edge (v31);
	\draw[-triangle 60] (v22) edge (v31) edge (v32);
	\draw[-triangle 60] (v23) edge (v32) edge (v33);
	\node (v40) at (-4,5) {\tiny{5}};
	\node[int] (v41) at (0,4) {};
	\node (v42) at (3,5) {\tiny{8}};
	\node (v50) at (-2,7) {\tiny{2}};
	\node (v51) at (0,7) {\tiny{4}};
	\node (v52) at (2,7) {\tiny{6}};
	\draw[-triangle 60] (v31) edge (v40) edge (v41);
	\draw[-triangle 60] (v32) edge (v41) edge (v42);
	\draw[-triangle 60] (v22)  edge (v41);
	\draw[-triangle 60] (v41) edge (v50) edge (v51) edge (v52);
	\draw (-2,-4) [rounded corners = 10pt,dotted] -- (-6,2) -- (-5,8) -- (3,8) -- (4,4) -- (0,-4) -- (-2,-4); 
\end{tikzpicture}
&
\begin{tikzpicture}[scale=0.35, yscale=-0.8]
	%\node (v00) at (-7,-5) {\tiny{2}};
	%\node (v01) at (-5,-5) {\tiny{4}};
	\node (v02) at (-3,-5) {\tiny{$\textcircled{1}$}};
	\node (v03) at (-1,-5) {\tiny{3}};
	%\node (v04) at (1,-4) {\tiny{5}};
	%\node (v05) at (3,-4) {\tiny{6}};
	\node (v10) at (-6,-3) {\tiny{$\bar{2}$}};
	\node[int] (v11) at (-2,-2) {};
	\node (v12) at (0,-2.5) {\tiny{8}};
	\node (v13) at (2,-2) {\tiny{$\bar{5}$}};
	%\node (v14) at (4,-2) {\tiny{7}};
	%\draw[-triangle 60] (v00) edge (v10);
	%\draw[-triangle 60] (v01) edge (v10);
	\draw[-triangle 60] (v02) edge (v11);
	\draw[-triangle 60] (v03) edge (v11);
	%\draw[-triangle 60] (v04) edge (v13);
	%\draw[-triangle 60] (v05) edge (v13);
	%\node (v20) at (-7,1) {\tiny{1}};
	\node[int] (v21) at (-4,0) {};
	\node[int] (v22) at (0,0) {};
	\node(v23) at (3,0) {\tiny{$\bar{6}$}};
	\draw[-triangle 60] (v10) edge (v21);
	\draw[-triangle 60] (v11) edge (v21) edge (v22);
	\draw[-triangle 60] (v12) edge (v22);
	\draw[-triangle 60] (v13) edge (v22);
	%\draw[-triangle 60] (v14) edge (v23);
	\node (v30) at (-5,3) {\tiny{3}};
	\node[int] (v31) at (-3,3) {};
	\node[int] (v32) at (2,2) {};
	%\node (v33) at (4,3) {\tiny{7}};
	\draw[-triangle 60] (v21) edge (v30) edge (v31);
	\draw[-triangle 60] (v22) edge (v31) edge (v32);
	\draw[-triangle 60] (v23) edge (v32);
	\node (v40) at (-4,5) {\tiny{5}};
	\node[int] (v41) at (0,4) {};
	\node (v42) at (3,5) {\tiny{8}};
	\node (v50) at (-2,7) {\tiny{2}};
	\node (v51) at (0,7) {\tiny{4}};
	\node (v52) at (2,7) {\tiny{6}};
	\draw[-triangle 60] (v31) edge (v40) edge (v41);
	\draw[-triangle 60] (v32) edge (v41) edge (v42);
	\draw[-triangle 60] (v22)  edge (v41);
	\draw[-triangle 60] (v41) edge (v50) edge (v51) edge (v52);
	\draw (-2,-4) [rounded corners = 10pt,dotted] -- (-6,2) -- (-5,8) -- (3,8) -- (4,4) -- (0,-4) -- (-2,-4); 
\end{tikzpicture} 
&
\begin{tikzpicture}[scale=0.35, yscale=-0.8, yshift = -4]
	\node (v00) at (-7,-5) {\tiny{2}};
	\node (v01) at (-5,-5) {\tiny{4}};
%	\node (v02) at (-3,-5) {\tiny{$\textcircled{1}$}};
%	\node (v03) at (-1,-5) {\tiny{3}};
	\node (v04) at (1,-4) {\tiny{5}};
	\node (v05) at (3,-4) {\tiny{6}};
	\node[int] (v10) at (-6,-3) {};
%	\node[int] (v11) at (-2,-2) {};
%	\node (v12) at (0,-2.5) {\tiny{8}};
	\node[int] (v13) at (2,-2) {};
	\node (v14) at (4,-2) {\tiny{7}};
	\draw[-triangle 60] (v00) edge (v10);
	\draw[-triangle 60] (v01) edge (v10);
%	\draw[-triangle 60] (v02) edge (v11);
%	\draw[-triangle 60] (v03) edge (v11);
	\draw[-triangle 60] (v04) edge (v13);
	\draw[-triangle 60] (v05) edge (v13);
	\node (v20) at (-7,1) {\tiny{1}};
	\node (v21) at (-4,0) {};
%	\node[int] (v22) at (0,0) {};
	\node[int] (v23) at (3,0) {};
	\draw[-triangle 60] (v10) edge (v20) edge (v21);
%	\draw[-triangle 60] (v11) edge (v21) edge (v22);
%	\draw[-triangle 60] (v12) edge (v22);
	\draw[-triangle 60] (v13) edge (v22) edge (v23);
	\draw[-triangle 60] (v14) edge (v23);
%	\node (v30) at (-5,3) {\tiny{3}};
%	\node[int] (v31) at (-3,3) {};
%	\node[int] (v32) at (2,2) {};
	\node (v33) at (4,3) {\tiny{7}};
%	\draw[-triangle 60] (v21) edge (v30) edge (v31);
%	\draw[-triangle 60] (v22) edge (v31) edge (v32);
	\draw[-triangle 60] (v23) edge (v32) edge (v33);
%	\node (v40) at (-4,5) {\tiny{5}};
%	\node[int] (v41) at (0,4) {};
%	\node (v42) at (3,5) {\tiny{8}};
%	\node (v50) at (-2,7) {\tiny{2}};
%	\node (v51) at (0,7) {\tiny{4}};
%	\node (v52) at (2,7) {\tiny{6}};
%	\draw[-triangle 60] (v31) edge (v40) edge (v41);
%	\draw[shorten <=1pt, -triangle 60] (v32) edge (v41) edge (v42);
%	\draw[-triangle 60] (v22)  edge (v41);
%	\draw[-triangle 60] (v41) edge (v50) edge (v51) edge (v52);
	\draw (-2,-4) [rounded corners = 10pt,dotted] -- (-6,2) -- (-5,8) -- (3,8) -- (4,4) -- (0,-4) -- (-2,-4); 
\end{tikzpicture}
\\
\text{a properadic graph $\Gamma$} &
\text{it's $1$-reachable subgraph $G_{\oner}(\Gamma)$} &
\text{the complement $\Gamma\setminus G_{\oner}(\Gamma)$}
\end{array}
\]

Let $e$ be an internal edge of $\Gamma$. 
Then the number of outputs of the $1$-reachable subgraph $G_{\oner}(\Gamma)$ is less than or equal to the number of outputs of $G_{\oner}(\Gamma/e)$ in the graph obtained by contracting $e$. 
The number of inputs of $G_{\oner}(\Gamma)$ may decrease after contracting $e$; however, in this case the loop number increases, and therefore the quantity
\[
\#\cin\bigl(G_{\oner}(\Gamma)\bigr) + 2\#\cloop\bigl(G_{\oner}(\Gamma)\bigr)
\]
also increases. 
Thus, the total loop order defined in~\ref{def::loop:order} provides the following decreasing filtration of the bar complex:
\begin{equation}
\label{eq::filt::1:reach}
\euF_{\oner}^{k}(\mathsf{B}_{\prop}^{\diamond}(\calD)) 
:= 
\bigoplus_{\Gamma : 2 \tloop\bigl(G_{\oner}(\Gamma)\bigr) \geq k} 
\mathsf{B}_{\prop}^{\diamond}(\calD)(\Gamma).
\end{equation}

The associated graded differential is the sum of those terms that contract an edge which is either an internal edge of $G_{\oner}(\Gamma)$ or an internal edge of its complement $\bigl(\Gamma \setminus G_{\oner}(\Gamma)\bigr)$. 
Any contraction of an edge incoming to $G_{\oner}(\Gamma)$ increases the filtration degree, and hence disappears in the associated graded complex.
Fix a properadic graph $\Gamma$, and order the inputs of the $1$-reachable subgraph as $e_1,\ldots,e_{N}$. 
Some of these are genuine inputs of $\Gamma$ (and already carry their labels), while the others are internal edges connecting $G_{\oner}(\Gamma)$ to its complement. 
We mark the latter edges with a special color. 
Assume that the complement $\Gamma \setminus G_{\oner}(\Gamma)$ has $m$ connected components. 
Then the first $m$ marked edges $e_1,\ldots,e_m$ record these components, while the remaining $k$ marked edges correspond to additional internal edges that do not create new connected components. 
This marking yields an isomorphism of complexes:
\begin{equation}
	\label{eq::prop::1reach}
	\gr\euF_{\oner}^{\udot}\bigl(\mathsf{B}_{\prop}^{\diamond}(\calD)\bigr) \simeq \bigoplus_{k, m} 
	\Bigl( \partial_{\cin}^{k+m} \bigl(\mathsf{B}_{\prop}^{\oner}(\calD)\bigr) \Bigr)
	\bigotimes_{\bS_{k+m}}
	\partial_{\cout}^{k} \left(	\Bigl( \partial_{\cout}\bigl(\mathsf{B}_{\prop}^{\diamond}(\calD)\bigr) \Bigr)^{\otimes m}\right).
\end{equation}
Here, $\mathsf{B}_{\prop}^{\oner}(\calD)$ is the subcomplex of $\mathsf{B}_{\prop}^{\diamond}(\calD)$ spanned by properadic graphs in which every vertex is $1$-reachable.
The operators $\partial_{\cin}$ and $\partial_{\cout}$ correspond to marking distinguished inputs/outputs.

Recall that thanks to Lemma~\ref{lem::total::loop} the total loop order is preserved by cutting along internal edges and, consequently, 
the total loop order in each tensor factor on the right-hand side of~\eqref{eq::prop::1reach} is strictly smaller than that on the left-hand side. 
Hence, by the inductive hypothesis, we may assume that these factors are Koszul (i.e.\ their homology is concentrated in the minimal homological degree).
Therefore, by Maschke's theorem, it remains only to show that the subcomplex $\mathsf{B}_{\prop}^{\oner}(\calD)$ is Koszul (i.e.\ has nonzero homology only in the minimal homological degrees). 
This will follow from one more filtration, described below.

%\newpage	
\begin{center}
{\bf Filtration by the $1$-directly reachable subtree}
\end{center}
To each $1$-reachable properadic graph $\Gamma$ we associate the \emph{$1$-directly reachable subtree}
$T_{\oneto}(\Gamma)$, which is spanned by those vertices $v$ of $\Gamma$ for which there exists a unique directed path from the input labeled by $1$ to $v$.
By definition, $T_{\oneto}(\Gamma)$ is a rooted tree.
Here is a pictorial example:
\[
\begin{array}{ccc}
\begin{tikzpicture}[scale=0.35, yscale = -0.8]
	%\node (v00) at (-7,-5) {\tiny{2}};
	%\node (v01) at (-5,-5) {\tiny{4}};
	\node (v02) at (-3,-5) {\tiny{$\mathbf{1}$}};
	\node (v03) at (-1,-5) {\tiny{3}};
	%\node (v04) at (1,-4) {\tiny{5}};
	%\node (v05) at (3,-4) {\tiny{6}};
	\node (v10) at (-6,-3) {\tiny{$\bar{2}$}};
	\node[int] (v11) at (-2,-2) {};
	\node (v12) at (0,-2.5) {\tiny{8}};
	\node (v13) at (2,-2) {\tiny{$\bar{5}$}};
	%\node (v14) at (4,-2) {\tiny{7}};
	%\draw[-triangle 60] (v00) edge (v10);
	%\draw[-triangle 60] (v01) edge (v10);
	\draw[-triangle 60] (v02) edge (v11);
	\draw[-triangle 60] (v03) edge (v11);
	%\draw[-triangle 60] (v04) edge (v13);
	%\draw[-triangle 60] (v05) edge (v13);
	%\node (v20) at (-7,1) {\tiny{1}};
	\node[int] (v21) at (-4,0) {};
	\node[int] (v22) at (0,0) {};
	\node(v23) at (3,0) {\tiny{$\bar{6}$}};
	\draw[-triangle 60] (v10) edge (v21);
	\draw[-triangle 60] (v11) edge (v21) edge (v22);
	\draw[-triangle 60] (v12) edge (v22);
	\draw[-triangle 60] (v13) edge (v22);
	%\draw[-triangle 60] (v14) edge (v23);
	\node (v30) at (-5,3) {\tiny{3}};
	\node[int] (v31) at (-3,3) {};
	\node[int] (v32) at (2,2) {};
	%\node (v33) at (4,3) {\tiny{7}};
	\draw[-triangle 60] (v21) edge (v30) edge (v31);
	\draw[-triangle 60] (v22) edge (v31) edge (v32);
	\draw[-triangle 60] (v23) edge (v32);
	\node (v40) at (-4,5) {\tiny{5}};
	\node[int] (v41) at (0,4) {};
	\node (v42) at (3,5) {\tiny{8}};
	\node (v50) at (-2,7) {\tiny{2}};
	\node (v51) at (0,7) {\tiny{4}};
	\node (v52) at (2,7) {\tiny{6}};
	\draw[-triangle 60] (v31) edge (v40) edge (v41);
	\draw[-triangle 60] (v32) edge (v41) edge (v42);
	\draw[-triangle 60] (v22)  edge (v41);
	\draw[-triangle 60] (v41) edge (v50) edge (v51) edge (v52);
	\draw (-2,-4) [rounded corners = 10pt,dotted] -- (-6,2) -- (-5,4) -- (-3,.5) -- (2,4) -- (4,3) -- (-2,-4); 
\end{tikzpicture}
&  
\begin{tikzpicture}[scale=0.35, yscale = -0.8]
	%\node (v00) at (-7,-5) {\tiny{2}};
	%\node (v01) at (-5,-5) {\tiny{4}};
	\node (v02) at (-3,-5) {\tiny{$\mathbf{1}$}};
	\node (v03) at (-1,-5) {\tiny{3}};
	%\node (v04) at (1,-4) {\tiny{5}};
	%\node (v05) at (3,-4) {\tiny{6}};
	\node (v10) at (-6,-3) {\tiny{$\bar{2}$}};
	\node[int] (v11) at (-2,-2) {};
	\node (v12) at (0,-2.5) {\tiny{8}};
	\node (v13) at (2,-2) {\tiny{$\bar{5}$}};
	%\node (v14) at (4,-2) {\tiny{7}};
	%\draw[-triangle 60] (v00) edge (v10);
	%\draw[-triangle 60] (v01) edge (v10);
	\draw[-triangle 60] (v02) edge (v11);
	\draw[-triangle 60] (v03) edge (v11);
	%\draw[-triangle 60] (v04) edge (v13);
	%\draw[-triangle 60] (v05) edge (v13);
	%\node (v20) at (-7,1) {\tiny{1}};
	\node[int] (v21) at (-4,0) {};
	\node[int] (v22) at (0,0) {};
	\node(v23) at (3,0) {\tiny{$\bar{6}$}};
	\draw[-triangle 60] (v10) edge (v21);
	\draw[-triangle 60] (v11) edge (v21) edge (v22);
	\draw[-triangle 60] (v12) edge (v22);
	\draw[-triangle 60] (v13) edge (v22);
	%\draw[-triangle 60] (v14) edge (v23);
	\node (v30) at (-5,3) {\tiny{3}};
	\node (v31a) at (-3-.5,3-.5) {\tiny{$\bar{5}$}};
	\node (v31b) at (-3+.5,3-.5) {\tiny{$\bar{6}$}};
	\node[int] (v32) at (2,2) {};
	%\node (v33) at (4,3) {\tiny{7}};
	\draw[-triangle 60] (v21) edge (v30) edge (v31a);
	\draw[-triangle 60] (v22) edge (v31b) edge (v32);
	\draw[-triangle 60] (v23) edge (v32);
	%\node (v40) at (-4,5) {\tiny{5}};
	\node (v41a) at (0-.5,4-.5) {\tiny{$\bar{2}$}};
	\node (v41b) at (0+.5,4-.5) {\tiny{$\bar{4}$}};
	\node (v42) at (3,5) {\tiny{8}};
	%\node (v50) at (-2,7) {\tiny{2}};
	%\node (v51) at (0,7) {\tiny{4}};
	%\node (v52) at (2,7) {\tiny{6}};
	%\draw[-triangle 60] (v31) edge (v40) edge (v41);
	\draw[-triangle 60] (v32) edge (v41b) edge (v42);
	\draw[-triangle 60] (v22)  edge (v41a);
	%\draw[-triangle 60] (v41) edge (v50) edge (v51) edge (v52);
	\draw (-2,-4) [rounded corners = 10pt,dotted] -- (-6,2) -- (-5,4) -- (-3,.5) -- (2,4) -- (4,3) -- (-2,-4); 
\end{tikzpicture}
&
\begin{tikzpicture}[scale=0.35, yscale = -0.8]
	\node (v21) at (-4,0) {};
	\node (v22a) at (-0.2,0.2) {};
	\node (v22b) at (0.2,0.5) {};
%	\node(v23) at (3,0) {\tiny{$\bar{6}$}};
%	\draw[-triangle 60] (v10) edge (v21);
%	\draw[-triangle 60] (v11) edge (v21) edge (v22);
%	\draw[-triangle 60] (v12) edge (v22);
%	\draw[-triangle 60] (v13) edge (v22);
	%\draw[-triangle 60] (v14) edge (v23);
%	\node (v30) at (-5,3) {\tiny{3}};
	\node[int] (v31) at (-3,3) {};
	\node (v32) at (2,2) {};
	%\node (v33) at (4,3) {\tiny{7}};
	\draw[-triangle 60] (v21)  edge (v31);
	\draw[-triangle 60] (v22a) edge (v31);
%	\draw[-triangle 60] (v22b) edge (v32);
%	\draw[-triangle 60] (v23) edge (v32);
	\node (v40) at (-4,5) {\tiny{5}};
	\node[int] (v41) at (0,4) {};
%	\node (v42) at (3,5) {\tiny{8}};
	\node (v50) at (-2,7) {\tiny{2}};
	\node (v51) at (0,7) {\tiny{4}};
	\node (v52) at (2,7) {\tiny{6}};
	\draw[-triangle 60] (v31) edge (v40) edge (v41);
	\draw[-triangle 60] (v32) edge (v41);
	\draw[-triangle 60] (v22b)  edge (v41);
	\draw[-triangle 60] (v41) edge (v50) edge (v51) edge (v52);
	\draw (-2,-4) [rounded corners = 10pt,dotted] -- (-6,2) -- (-5,4) -- (-3,.5) -- (2,4) -- (4,3) -- (-2,-4); 
\end{tikzpicture}
\\
\shortstack[c]{a $1$-reachable \\  properadic graph $\Gamma$ }
& 
\shortstack[c]{ it's $1$-directly reachable \\ subtree $T_{\oneto}(\Gamma)$}
&
\text{ the complement $\Gamma\setminus T_{\oneto}(\Gamma)$}
\end{array}
\]

When contracting an internal edge $e$ in a $1$-reachable graph $\Gamma$, there are three possible cases:
either $e$ is an internal edge of $T_{\oneto}(\Gamma)$, an internal edge of the complement $\Gamma \setminus T_{\oneto}(\Gamma)$, or an edge that starts in $T_{\oneto}(\Gamma)$ and ends in its complement.
In the first two cases, the total number of inputs and outputs of $T_{\oneto}(\Gamma)$ remains unchanged, whereas in the third case this number decreases.
Therefore, we obtain the following decreasing filtration:
\begin{equation}
	\label{eq::Filt::1::tree}
	\euF_{\oneto}^{k}(\mathsf{B}_{\prop}^{\oner}(\calD))
	:=
	\bigoplus_{\Gamma :
		2 \tloop\left(T_{\oneto}(\Gamma)\right) \leq k}
	\mathsf{B}_{\prop}^{\diamond}(\calD)(\Gamma).
\end{equation}

We claim that the associated graded complex is isomorphic to the following tensor product of complexes:
\begin{equation}
	\label{eq::onereach::subtree}
	\gr\euF_{\oneto} \bigl( \mathsf{B}_{\prop}^{\oner}(\calD) \bigr)
	\simeq
	\bigoplus_{m>0,k\geq 2m}
	\Bigl(
	\partial_{\cout}^{k}
	\bigl( \mathsf{B}_{\di}^{\oner}(\calD) \bigr)
	\Bigr)
	\bigotimes_{\bS_{k}}
	\partial_{\cin}^{k}
	\Bigl(\bigoplus_{\substack{k=k_1+\ldots+k_m \\ k_i\geq 2}}
	\bigl(\bigotimes_{i=1}^{m} \partial_{\cin}^{k_i}\mathsf{B}_{\prop}^{\oneoneR}(\calD) \bigr)
	\Bigr).
\end{equation}
Here, $m$ is the number of connected components in the complement $\Gamma\setminus T_{\oneto}(\Gamma)$ and $k$ is the number of edges that connects the subtree $T_{\oneto}(\Gamma)$ with its complement. Note that each connected component of the complement contains at least two "frozen" edges.
By the complex $\partial_{\cin}^{r}\mathsf{B}_{\prop}^{\oneoneR}(\calD)$ we denote the subspace of the bar complex
$\partial_{\cin}^{r}\mathsf{B}_{\prop}^{\diamond}(\calD)$
with $r$ inputs marked by a distinguished color.
It is spanned by $\calD(\Gamma)$, where the properadic graphs $\Gamma$ satisfy the following condition:
\begin{equation}
	\label{eq::2in::source}
	\text{\it each source vertex of $\Gamma$ has at least two of the $r$ marked inputs connected to it.}
\end{equation}

The first tensor factor in~\eqref{eq::onereach::subtree} corresponds to the $1$-directly reachable subtree $T_{\oneto}(\Gamma)$.
Edge contractions within this subtree define an isomorphism with the dioperadic bar complex considered in the proof of Theorem~\ref{thm::tw::diop}.
In particular, this factor has homology concentrated in the minimal homological degree.

The tensor factors $\mathsf{B}_{\prop}^{\oneoneR}(\calD)$ correspond to the connected components of the complementary subgraph
$\Gamma \setminus T_{\oneto}(\Gamma)$ and it remains to show that they have homology concentrated in the minimal homological degree.
This is done by considering further filtrations and associated graded complexes described below.

It is worth mentioning, that the loop order of each such connected graph is strictly smaller than the loop order of the original graph $\Gamma$, what is important for the induction arguments.

%\newpage
\begin{center}
{\bf From $\mathsf{B}_{\prop}^{\oneoneR}(\calD)$ to $\mathsf{B}_{\prop}^{\onetoR}(\calD)$}
\end{center}
Consider the filtration of the complex $\mathsf{B}_{\prop}^{\oneoneR}(\calD)$ by the number of source vertices in a graph, that is, vertices whose incoming edges are all leaves, and pass to the associated graded complex.
By Maschke's theorem, we may assume that the source vertices are ordered. Fix the first source vertex and label the first two marked inputs entering it by $1$ and $2$.

We now repeat the filtration $\euF_{\oner}^{\udot}$ defined in~\eqref{eq::filt::1:reach}.
Since the input $1$ enters the first source vertex, the subgraph $G_{\oner}(\Gamma)$ of $1$-reachable vertices of a properadic graph $\Gamma$ appearing in the complex $\mathsf{B}_{\prop}^{\oneoneR}(\calD)$ is well defined and has at least two marked inputs.
Each connected component of the complementary graph $\Gamma \setminus G_{\oner}(\Gamma)$ is a properadic graph satisfying property~\eqref{eq::2in::source}, and the number of source vertices in each such component is strictly smaller than the number of source vertices in $\Gamma$.

Next, we consider a source vertex in $\Gamma \setminus G_{\oner}(\Gamma)$, form the subgraph of vertices reachable from that source, and apply the associated filtration $\euF_{\oner}^{\udot}$.
Iterating this procedure, we reduce the problem of the Koszul property of the complex $\mathsf{B}_{\prop}^{\oneoneR}(\calD)$ to the corresponding question for its subcomplex $\mathsf{B}_{\prop}^{\onetoR}(\calD)$, which is spanned by properadic graphs with exactly one source vertex.
\begin{center}
{\bf Again: filtration by the $1$-directly reachable subtree}
\end{center}
Finally, for each properadic graph $\Gamma$ with exactly one source vertex, with inputs $1$ and $2$ entering this source, the notion of the $1$-directly reachable subtree $T_{\oneto}(\Gamma)$ is well defined.
We define a filtration $\euF_{\oneto}$ on the complex $\mathsf{B}_{\prop}^{\onetoR}(\calD)$ following the construction in~\eqref{eq::Filt::1::tree}.
The associated graded complex admits the following description:
\begin{equation}
	\label{eq::12:reach::subtree}
	\gr^{\udot}\euF_{\oneto} \bigl( \mathsf{B}_{\prop}^{\oner}(\calD) \bigr) \simeq 
	\bigoplus_{m,k}
	\Bigl(
	\partial_{\cout}^{m+2k}
	\bigl( \mathsf{B}_{\di}^{\onetoR}(\calD) \bigr)
	\Bigr)
	\bigotimes_{\bS_{k+2m}}
	\partial_{\cin}^{k}
	\Bigl(
	\bigl( \mathsf{B}_{\prop}^{\oneoneR}(\calD) \bigr)^{\otimes m}
	\Bigr).
\end{equation}

By Proposition~\ref{prp::2tw::diop}, the first factor $\mathsf{B}_{\di}^{\onetoR}(\calD)$ has homology concentrated in the minimal homological degree.
On the other hand, an induction on the loop order shows that the tensor powers of $\mathsf{B}_{\prop}^{\oneoneR}(\calD)$ also admit nonzero homology only in the minimal homological degree.
This completes the inductive proof of the theorem.
\end{proof}

\section{Properad of quadratic Poisson structures and other examples}
\label{sec::QPois}

\subsection{The description of the dioperad and the properadic envelope}
Let $V := \mathbb{R}^d$ be a flat Euclidean space equipped with coordinates 
$x^1,\ldots,x^d$. Recall that a Poisson structure on $\mathbb{R}^d$ is specified by a 
\emph{Poisson bivector}
\begin{equation}
\label{eq::Pois::bi}
\pi := \sum_{i,j} \pi_{ij}\,
\frac{\partial}{\partial x^i} \wedge 
\frac{\partial}{\partial x^j},
\end{equation}
which satisfies the Poisson identity
\begin{equation}
	\label{eq::Pois:Id}
	0 = [\pi,\pi]_{Sh}
	= \sum_{i,j,k,l} \pi_{ij} 
	\frac{\partial \pi_{kl}}{\partial x^i}
	\frac{\partial}{\partial x^j} \wedge 
	\frac{\partial}{\partial x^k} \wedge 
	\frac{\partial}{\partial x^l}
	- \pi_{ij} 
	\frac{\partial \pi_{kl}}{\partial x^j}
	\frac{\partial}{\partial x^i} \wedge 
	\frac{\partial}{\partial x^k} \wedge 
	\frac{\partial}{\partial x^l},
\end{equation}
where $[, ]_{Sh}$ denotes the Schouten--Nijenhuis bracket on polyvector fields.  
The Poisson structure is called \emph{quadratic} if the functions $\pi_{ij}$ are quadratic 
polynomials. In this case, the bivector $\pi$ can be viewed as a tensor
\[
\pi = 
\sum_{i,j,k,l} c_{ij}^{kl}\, x^k x^l\,
\frac{\partial}{\partial x^i} \wedge 
\frac{\partial}{\partial x^j}
\;\in\; 
S^2 V \otimes \Lambda^2 V^{*} 
\;\simeq\; \Hom(\Lambda^2 V, S^2 V), \text{ with } c_{ij}^{kl}\in\mathbb{R}.
\]

When we forget the choice of coordinates for~$\pi$, the notion we obtain is encoded by the 
following properad.

\begin{definition}
	The properad $\QP$ of \emph{quadratic Poisson structures} is generated by a single element 
	$\pi \in \QP(2,2)$ with two inputs and two outputs, symmetric in the inputs and anti-symmetric 
	in the outputs:
	\begin{equation}
		\label{eq::QP::gen}
		\qpgen{1}{2}{1}{2}
		= - \qpgen{2}{1}{1}{2}
		= \qpgen{1}{2}{2}{1}
		= - \qpgen{2}{1}{2}{1}.
	\end{equation}
	This generator satisfies the quadratic relation
	\begin{equation}
		\label{eq::qp::rel::pic}
		\begin{aligned}
			&\qprel{1}{2}{3}{1}{2}{3} \;+\;
			\qprel{2}{3}{1}{1}{2}{3} \;+\;
			\qprel{3}{1}{2}{1}{2}{3} \;+\;
			\qprel{1}{2}{3}{2}{3}{1} \;+\;
			\qprel{2}{3}{1}{2}{1}{3} \;+\; 
			\qprel{3}{1}{2}{2}{1}{3} 
			\\[4pt]
			&\quad+\;
			\qprel{1}{2}{3}{3}{1}{2} \;+\;
			\qprel{2}{3}{1}{3}{1}{2} \;+\;
			\qprel{3}{1}{2}{3}{1}{2}
			\;=\; 0.
		\end{aligned}
	\end{equation}
\end{definition}

It is straightforward to see that Relation~\eqref{eq::qp::rel::pic} is the graphical 
expression of the Poisson identity~\eqref{eq::Pois:Id} for quadratic coefficients~$\pi_{ij}$.

\begin{remark}
	Note that relation~\eqref{eq::qp::rel::pic} contains no loops.  
	There is a well defined notion of a quadratic dioperad $\QP_{\di}$ given by the same set of generators and relations. Respectively, the properad $\QP$ is the properadic envelope of $\QP_{\di}$ and representation of $\QP_{\di}$ and of $\QP$ coincides.
\end{remark}

The dioperad $(\QP_{\di})^{!}$ Koszul dual to $\QP$ has also a single generator of arity $(2,2)$ (symmetric with respect to inputs and skewsymmetric with respect to outputs) and all nontrivial quadratic compositions are proportional to each other.
I.e.
$$
(\QP_{\di})^{!}:= \left\langle \qpgen{1}{2}{1}{2} = (-1)^{\tau}  \qpgen{$\sigma(1)$}{$\sigma(2)$}{$\tau(1)$}{$\tau(2)$} \left| 
\qprel{1}{2}{3}{1}{2}{3} =(-1)^{\tau} \qprel{$\sigma(1)$}{$\sigma(2)$}{$\sigma(3)$}{$\tau(1)$}{$\tau(2)$}{$\tau(3)$} 
\right.\right\rangle
$$
Consequently, we have the following description of $\bS_m\times\bS_n$ action on the space of operations:
\begin{equation}
\label{eq::dim::QP!}
(\QP_{\di})^{!}(m,n) = \begin{cases}
	\mathbb{1}_m\otimes\sgn_n, \text{ if } m=n, \\ 0, \text{ otherwise. }
\end{cases}
\end{equation}
Note, that the unique quadratic relation of higher genus follows from the (skew)-symmetry relations:
$$
% First picture
\begin{tikzpicture}[scale=0.3]
	\tikzset{vertex/.style={circle,fill,inner sep=1.2pt}}
	
	% vertices
	\node[vertex] (top)    at (0,1.3) {};
	\node[vertex] (bottom) at (0,-1.3) {};
	
	% external half-edges
	\draw (top)    -- (-0.6,2);
	\draw (top)    -- ( 0.6,2);
	\draw (bottom) -- (-0.6,-2);
	\draw (bottom) -- ( 0.6,-2);
	
	% two parallel edges forming an oval
	\draw (top) .. controls (-1,0.5) and (-1,-0.5) .. (bottom);
	\draw (top) .. controls ( 1,0.5) and ( 1,-0.5) .. (bottom);
\end{tikzpicture}
= (-1)\cdot 
\begin{tikzpicture}[scale=0.3]
	\tikzset{vertex/.style={circle,fill,inner sep=1.2pt}}
	
	% vertices (same x, different y)
	\node[vertex] (top)    at (0,1.3) {};
	\node[vertex] (bottom) at (0,-1.3) {};
	
	% incoming half-edges at the top vertex
	\draw (top) -- (-0.6,2);
	\draw (top) -- ( 0.6,2);
	
	% outgoing half-edges at the bottom vertex
	\draw (bottom) -- (-0.6,-2);
	\draw (bottom) -- ( 0.6,-2);
	
	% right edge (over, continuous)
	\draw
	(top)
	.. controls (0.8,0.6) and (0.4,0.2) ..
	(0,0)
	.. controls (-0.4,-0.2) and (-0.8,-0.6) ..
	(bottom);
	
	% left edge (under, symmetric, with small gap at crossing)
	\draw
	(top)
	.. controls (-0.8,0.6) and (-0.4,0.2) ..
	(-0.15,0.05);
	\draw
	(0.15,-0.05)
	.. controls (0.4,-0.2) and (0.8,-0.6) ..
	(bottom);
\end{tikzpicture} = 
(-1)\cdot 
\begin{tikzpicture}[scale=0.3]
	\tikzset{vertex/.style={circle,fill,inner sep=1.2pt}}
	
	% vertices
	\node[vertex] (top)    at (0,1.3) {};
	\node[vertex] (bottom) at (0,-1.3) {};
	
	% external half-edges
	\draw (top)    -- (-0.6,2);
	\draw (top)    -- ( 0.6,2);
	\draw (bottom) -- (-0.6,-2);
	\draw (bottom) -- ( 0.6,-2);
	
	% two parallel edges forming an oval
	\draw (top) .. controls (-1,0.5) and (-1,-0.5) .. (bottom);
	\draw (top) .. controls ( 1,0.5) and ( 1,-0.5) .. (bottom);
\end{tikzpicture}
\quad \Rightarrow \quad 
\begin{tikzpicture}[scale=0.3]
	\tikzset{vertex/.style={circle,fill,inner sep=1.2pt}}
	
	% vertices
	\node[vertex] (top)    at (0,1.3) {};
	\node[vertex] (bottom) at (0,-1.3) {};
	
	% external half-edges
	\draw (top)    -- (-0.6,2);
	\draw (top)    -- ( 0.6,2);
	\draw (bottom) -- (-0.6,-2);
	\draw (bottom) -- ( 0.6,-2);
	
	% two parallel edges forming an oval
	\draw (top) .. controls (-1,0.5) and (-1,-0.5) .. (bottom);
	\draw (top) .. controls ( 1,0.5) and ( 1,-0.5) .. (bottom);
\end{tikzpicture}
=0.
$$
Using the same symmetry relations one see that any composition in $\QP^{!}$ of higher genus of the form:
$$
\begin{tikzpicture}[scale=0.3]
	\tikzset{vertex/.style={circle,fill,inner sep=1.2pt}}
	
	% vertices
	\node[vertex] (top)    at (0,1.3) {};
	\node[vertex] (bottom) at (0,-1.3) {};
	
	% external half-edges
	\draw (top)    -- (-0.6,2);
	\draw (top)    -- ( 0.6,2);
	\draw (bottom) -- (-0.6,-2);
	\draw (bottom) -- ( 0.6,-2);
	\node at (0,0) {$\scriptstyle \ldots$}; 
	% two parallel edges forming an oval
	\draw (top) .. controls (-1,0.5) and (-1,-0.5) .. (bottom);
	\draw (top) .. controls (-2,0.5) and (-2,-0.5) .. (bottom);	
	\draw (top) .. controls ( 1,0.5) and ( 1,-0.5) .. (bottom);
	\draw (top) .. controls ( 2,0.5) and ( 2,-0.5) .. (bottom);
\end{tikzpicture}
$$
is equal to zero.
Consequently, 
any composition of $\circ_{\Gamma}$ is equal to zero in the properad $\QP^{!}$ whenever the graph $\Gamma$ has higher genus.
What follows that the Koszul dual properad $\QP^{!}$ coincides with the properadic envelope of the Koszul dual dioperad $\QP_{\di}^{!}$:
\begin{gather*}
	\dim\QP^{!}_{\prop}(m,n) = \dim\QP^{!}_{\di}(m,n) = \delta_{m,n}.
	\\
	\QP^{!}_{\prop} = \Bigl(\mathsf{U}_{\prop}(\QP_{\di})\Bigr)^{!} \simeq
	\mathsf{U}_{\prop}\Bigl(\QP_{\di}^{!}\Bigr) 
\end{gather*}

\begin{remark}
	In the paper~\cite{Merkulov_QP}, we discussed analogs of the properad $\QP$ called $\QP_{c,d}$ featuring different (homological) gradings and sign conventions of the generators. 
	It is worth mentioning that in those cases, the symmetry conditions do not automatically imply the vanishing of genus-one operations within the properadic envelope $\mathsf{U}_{\prop}((\QP_{c,d})_{\di}^{!})$ of the quadratic dual dioperad $(\QP_{c,d})_{\di}^{!}$. However, the additional relations in the quadratic dual properad do enforce the vanishing of higher-genus operations:
	$$
	\begin{tikzpicture}[scale=0.3]
		\tikzset{vertex/.style={circle,fill,inner sep=1.2pt}}
		
		% vertices
		\node[vertex] (top)    at (0,1.3) {};
		\node[vertex] (bottom) at (0,-1.3) {};
		
		% external half-edges
		\draw (top)    -- (-0.6,2);
		\draw (top)    -- ( 0.6,2);
		\draw (bottom) -- (-0.6,-2);
		\draw (bottom) -- ( 0.6,-2);
		
		% two parallel edges forming an oval
		\draw (top) .. controls (-1,0.5) and (-1,-0.5) .. (bottom);
		\draw (top) .. controls ( 1,0.5) and ( 1,-0.5) .. (bottom);
	\end{tikzpicture}
	=0 \ \Rightarrow \ 
	\begin{tikzpicture}[scale=0.3]
		\tikzset{vertex/.style={circle,fill,inner sep=1.2pt}}
		
		% vertices
		\node[vertex] (top)    at (0,1.3) {};
		\node[vertex] (bottom) at (0,-1.3) {};
		
		% external half-edges
		\draw (top)    -- (-0.6,2);
		\draw (top)    -- ( 0.6,2);
		\draw (bottom) -- (-0.6,-2);
		\draw (bottom) -- ( 0.6,-2);
		\node at (0,0) {$\scriptstyle \ldots$}; 
		% two parallel edges forming an oval
		\draw (top) .. controls (-1,0.5) and (-1,-0.5) .. (bottom);
		\draw (top) .. controls (-2,0.5) and (-2,-0.5) .. (bottom);	
		\draw (top) .. controls ( 1,0.5) and ( 1,-0.5) .. (bottom);
		\draw (top) .. controls ( 2,0.5) and ( 2,-0.5) .. (bottom);
	\end{tikzpicture} = 0 \text{ in } \QP_{c,d}^{!} \ \Rightarrow \ \ \QP_{c,d}^{!} =\mathsf{U}_{\prop}^{\diam}\bigl((\QP_{c,d})_{\di}^{!}\bigr)\Bigr) = (\QP_{c,d})_{\di}^{!}.
	$$
	It is not difficult to see that  the main results in this section -- such as the Koszul property for both the dioperad and the properad -- remain valid for $\QP_{c,d}$, provided that the homological gradings are handled carefully.
\end{remark}

\subsection{Twisted Associative algebra $\partial_{\cin}\partial_{\cout}\QP^{!}$}

The dimensions of the Koszul dual dioperad $\QP^!$ are pretty simple as predicted by~\eqref{eq::dim::QP!}, hence the dimensions of it's partial derivatives are also at most one-dimensional:
$$
\dim\bigl(\partial_{\cin}\partial_{\cout}(\QP^{!})(m,n)\bigr)=\begin{cases}
	1, \text{ if } m=n,\\ 0, \text{ otherwise; }
\end{cases}
\quad 
\dim\bigl(\partial_{\cin}^2\partial_{\cout}(\QP^{!})(m,n)\bigr)=\begin{cases}
	1, \text{ if } n=m-1,\\ 0, \text{ otherwise. }
\end{cases}
$$
In particular, the $2$-coloured twisted associative algebra $\partial_{\cin}\partial_{\cout}\QP^{!}$ is generated by a unique element of bi-arity $(1,1)$
subject to relation that claims that all monomials of degree $2$ are equal:
$$
\partial_{\cin}\partial_{\cout}\QP^{!} \simeq \left\langle 
\begin{tikzpicture}[scale=0.4]
	\node[int] (v) at (0,0) {};
	\draw[dotted,->] (-1.2,0) -- (v);
	\draw[dotted,->] (v) -- (1.2,0);
	\draw[-triangle 60] (0,1.2) -- (v);
	\draw[-triangle 60] (v) -- (0,-1.2);
\end{tikzpicture}
\left| 
\begin{tikzpicture}[scale=0.4]
	\node[int] (v) at (0,0) {};
	\node[int] (w) at (1,0) {};
	\node (v1) at (0,1.2) {\tiny{1}};
	\node (v2) at (1,1.2) {\tiny{2}};
	\node (w1) at (0,-1.2) {\tiny{1}};
\node (w2) at (1,-1.2) {\tiny{2}};	
	\draw[dotted,->] (-1,0) -- (v);
	\draw[dotted,->] (v) -- (w);
	\draw[dotted,->] (w) -- (2,0);	
	\draw[-triangle 60] (0,1) -- (v);
	\draw[-triangle 60] (v) -- (0,-1);
	\draw[-triangle 60] (1,1) -- (w);
   \draw[-triangle 60] (w) -- (1,-1);
\end{tikzpicture}
=
\begin{tikzpicture}[scale=0.4]
	\node[int] (v) at (0,0) {};
	\node[int] (w) at (1,0) {};
	\node (v1) at (0,1.2) {\tiny{1}};
	\node (v2) at (1,1.2) {\tiny{2}};
	\node (w1) at (0,-1.2) {\tiny{2}};
	\node (w2) at (1,-1.2) {\tiny{1}};	
	\draw[dotted,->] (-1,0) -- (v);
	\draw[dotted,->] (v) -- (w);
	\draw[dotted,->] (w) -- (2,0);	
	\draw[-triangle 60] (0,1) -- (v);
	\draw[-triangle 60] (v) -- (0,-1);
	\draw[-triangle 60] (1,1) -- (w);
	\draw[-triangle 60] (w) -- (1,-1);
\end{tikzpicture}
=
(-)\ \begin{tikzpicture}[scale=0.4]
	\node[int] (v) at (0,0) {};
	\node[int] (w) at (1,0) {};
	\node (v1) at (0,1.2) {\tiny{2}};
	\node (v2) at (1,1.2) {\tiny{1}};
	\node (w1) at (0,-1.2) {\tiny{1}};
	\node (w2) at (1,-1.2) {\tiny{2}};	
	\draw[dotted,->] (-1,0) -- (v);
	\draw[dotted,->] (v) -- (w);
	\draw[dotted,->] (w) -- (2,0);	
	\draw[-triangle 60] (0,1) -- (v);
	\draw[-triangle 60] (v) -- (0,-1);
	\draw[-triangle 60] (1,1) -- (w);
	\draw[-triangle 60] (w) -- (1,-1);
\end{tikzpicture}
=
(-)\ \begin{tikzpicture}[scale=0.4]
	\node[int] (v) at (0,0) {};
	\node[int] (w) at (1,0) {};
	\node (v1) at (0,1.2) {\tiny{2}};
	\node (v2) at (1,1.2) {\tiny{1}};
	\node (w1) at (0,-1.2) {\tiny{2}};
	\node (w2) at (1,-1.2) {\tiny{1}};	
	\draw[dotted,->] (-1,0) -- (v);
	\draw[dotted,->] (v) -- (w);
	\draw[dotted,->] (w) -- (2,0);	
	\draw[-triangle 60] (0,1) -- (v);
	\draw[-triangle 60] (v) -- (0,-1);
	\draw[-triangle 60] (1,1) -- (w);
	\draw[-triangle 60] (w) -- (1,-1);
\end{tikzpicture}
\right.\right\rangle.
$$
Respectively, the right module $\partial_{\cin}^{2}\partial_{\cout}\QP^{!}$ is the module generated by a single element of arity $(0,1)$ subject to the similar commutativity relation:
$$
\partial_{\cin}^2\partial_{\cout}\QP^{!} \simeq \left\langle 
\begin{tikzpicture}[scale=0.4]
	\node[int] (v) at (0,0) {};
	\draw[dotted,->] (-1.2,0.5) -- (v);
	\draw[dotted,->] (-1.2,-0.5) -- (v);	
	\draw[dotted,->] (v) -- (1.2,0);
	\draw[-triangle 60] (v) -- (0,-1.2);
\end{tikzpicture}
\cdot
\left(\partial_{\cin}\partial_{\cout}\QP^{!}\right)
\left| \
\begin{tikzpicture}[scale=0.4]
	\node[int] (v) at (0,0) {};
	\node[int] (w) at (1,0) {};
	\draw[dotted,->] (-1,0.5) -- (v);
\draw[dotted,->] (-1,-0.5) -- (v);	
%	\node (v1) at (0,1.2) {\tiny{1}};
	\node (v2) at (1,1.2) {\tiny{1}};
	\node (w1) at (0,-1.2) {\tiny{1}};
	\node (w2) at (1,-1.2) {\tiny{2}};	
	\draw[dotted,->] (-1,0) -- (v);
	\draw[dotted,->] (v) -- (w);
	\draw[dotted,->] (w) -- (2,0);	
%	\draw[-triangle 60] (0,1) -- (v);
	\draw[-triangle 60] (v) -- (0,-1);
	\draw[-triangle 60] (1,1) -- (w);
	\draw[-triangle 60] (w) -- (1,-1);
\end{tikzpicture}
=
\begin{tikzpicture}[scale=0.4]
	\node[int] (v) at (0,0) {};
	\node[int] (w) at (1,0) {};
	\draw[dotted,->] (-1,0.5) -- (v);
	\draw[dotted,->] (-1,-0.5) -- (v);	
	%	\node (v1) at (0,1.2) {\tiny{1}};
	\node (v2) at (1,1.2) {\tiny{1}};
	\node (w1) at (0,-1.2) {\tiny{2}};
	\node (w2) at (1,-1.2) {\tiny{1}};	
	\draw[dotted,->] (-1,0) -- (v);
	\draw[dotted,->] (v) -- (w);
	\draw[dotted,->] (w) -- (2,0);	
	%	\draw[-triangle 60] (0,1) -- (v);
	\draw[-triangle 60] (v) -- (0,-1);
	\draw[-triangle 60] (1,1) -- (w);
	\draw[-triangle 60] (w) -- (1,-1);
\end{tikzpicture}
\right.\right\rangle.
$$

\begin{theorem}
\label{thm::tw::QP}	
	\begin{itemize}
		\item The $2$-colored twisted associative algebra $\partial_{\cin}\partial_{\cout}\QP^{!}$ is Koszul;
		\item its right module $\partial_{\cin}^2\partial_{\cout}\QP^{!}$ is also Koszul.
	\end{itemize}	
\end{theorem}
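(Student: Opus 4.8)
The plan is to deduce both statements from the equivalent characterizations collected in Definition~\ref{def::Bar::Twisted}, using crucially that every nonzero component of $\cA:=\partial_{\cin}\partial_{\cout}\QP^!$ and of $\cM:=\partial_{\cin}^2\partial_{\cout}\QP^!$ is one-dimensional. Indeed, $\cA$ is the quadratic twisted associative algebra on the single generator $x\in\cA(1,1)$, and its unique relation identifies, up to the signs displayed above, all four rearrangements of the product $x\diamond x$; thus $\cA$ is ``commutative up to sign'' — symmetrized in one colour and skew-symmetrized in the other — and $\cA(n,n)$ is one-dimensional for every $n$. Likewise $\cM$ is the rank-one module generated in bi-arity $(0,1)$ subject to the analogous commutation of the $\cA$-action, and is again one-dimensional in each admissible bi-arity. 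I would prove Koszulness by exhibiting a PBW (quadratic Gröbner) basis, which by the standard criterion for twisted associative algebras is equivalent to the acyclicity of the Koszul complex $\cA\diamond\cA^\antish$ for the first bullet, and to $\cM^\antish\to\mathsf{B}_{\tw}(\cA,\cM)$ being a quasi-isomorphism for the second.

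First I would orient the relation so that it rewrites every rearrangement of a word $x\diamond\dots\diamond x$ to the one in which both the input and the output labels increase along the word. Every monomial of bi-arity $(n,n)$ then reduces to this single sorted normal form, so the normal monomials span a space of dimension at most one in each bi-arity. Since $\dim\cA(n,n)=1$ by the computation of $\dim\QP^!$, the sorted monomial is in fact a basis; by the standard dimension criterion for PBW bases this forces the quadratic relation to be a Gröbner basis, and hence $\cA$ is Koszul. It is instructive to record the numerical shadow of this acyclicity: passing to bivariate Frobenius characteristics, the Euler characteristic of $\cA\diamond\cA^\antish$ is governed by the classical identity $\sum_{i+j=n}(-1)^i e_i h_j=\delta_{n,0}$ between elementary and complete symmetric functions, the two colours supplying the $e$- and $h$-factors.

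For the second bullet I would run the identical argument relative to $\cA$. The module relation commutes the $\cA$-action past the distinguished output, so I would again sort the words ``generator of $\cM$ followed by copies of $x$'' to a unique normal form, the first genuine overlaps occurring in bi-arity $(2,3)$. Because $\cM$ is again one-dimensional in each admissible bi-arity, the sorted word is a basis, which yields a relative PBW basis and hence the Koszulness of $\cM$ as a right $\cA$-module, that is, the quasi-isomorphism $\cM^\antish\to\mathsf{B}_{\tw}(\cA,\cM)$.

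The main obstacle is the sign bookkeeping forced by the skew-symmetry in one colour: every rearrangement picks up a sign, so one must make the rewriting system — equivalently, a contracting homotopy on the Koszul complex — coherent and equivariant for both symmetric group actions at once, rather than merely checking that the Euler characteristic vanishes. Once the Gröbner-basis formalism for two-coloured twisted associative algebras and the corresponding PBW-implies-Koszul statement are granted, the one-dimensionality of $\cA$ and of $\cM$ makes both reductions automatically confluent; the only real content is arranging the signs so that the unique normal form is nonzero in every bi-arity, which is exactly what $\dim\cA(n,n)=1$ and the one-dimensionality of $\cM$ guarantee.
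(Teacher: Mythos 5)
Your proposal is correct in its combinatorial substance and, at that level, coincides with the heart of the paper's argument: both proofs rest on sorting the input and output labels of a word in the single generator and on the fact that the one-dimensionality of $\cA(n,n)$ (resp.\ of $\cM$ in each admissible bi-arity), computed from~\eqref{eq::dim::QP!}, forces the sorted monomial to be a nonzero basis element --- this is also exactly what resolves your sign worry. The difference is in the packaging, and it matters for rigour. You route the conclusion through a Gr\"obner-basis/PBW formalism for $2$-coloured twisted associative algebras and their right modules, which you explicitly grant as a black box; that formalism is not developed in the paper and is not in the cited literature in the $2$-coloured, module-relative form you need (the paper even stresses in \S\ref{sec::Recollection} that such machinery is unavailable for properads, which is why it argues by hand at the twisted-algebra level). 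The paper instead proves Theorem~\ref{thm::tw::QP} directly: it filters $\mathsf{B}_{\tw}(\partial_{\cin}\partial_{\cout}\QP^{!})$ by the total number of input and output inversions of the pair of ordered set partitions $(\mathsf{I}_{\bullet}\,|\,\mathsf{J}_{\bullet})$ indexing a basis --- precisely the filtration your implicit monomial order would induce --- identifies the inversion-free subcomplex $\mathsf{B}_{\mathsf{planar}}(N,N)$ with the weight-$N$ component of the bar complex of $\Bbbk[x]$ (whose Koszulness plays the role of your confluence/normal-form count), and observes that the associated graded splits as a direct sum of tensor products of such planar pieces; the module case is handled by the same filtration. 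In short, the paper's proof is the de-black-boxed version of yours. To make your write-up self-contained you must either actually construct the monomial order on words decorated by pairs of ordered set partitions and prove the PBW-implies-Koszul implication in this $2$-coloured setting (the inversion count is the right order, and the proof is essentially the paper's spectral-sequence argument), or simply replace the appeal to PBW by that direct filtration; as written, the key implication is assumed rather than proved.
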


\begin{proof}
	Recall that the bar complex $\mathsf{B}_{\tw}(\partial_{\cin}\partial_{\cout}\QP^{!})$
	is the cofree twisted associative coalgebra.
	In particular, its component of homological degree $k$ is given by the direct sum
	\begin{multline}
		\label{eq::tw::QP::diamond}
		\underbrace{(\partial_{\cin}\partial_{\cout}\QP^{!})\diamond\ldots \diamond (\partial_{\cin}\partial_{\cout}\QP^{!})}_{k}(N,N)
		\simeq  \\
		\simeq
		\bigoplus_{m_1+\ldots+m_k=N}
		\mathsf{Ind}_{(\bS_{m_1}\times\bS_{m_1})\times\ldots\times(\bS_{m_k}\times\bS_{m_k})}^{\bS_{m_1+\ldots+m_k}\times \bS_{m_1+\ldots+m_k}}
		\Bigl(\bigotimes_{i}\mathbb{1}_{m_i}\otimes \mathsf{Sgn}_{m_i}\Bigr).
	\end{multline}
	
	In particular, this complex admits a basis which we denote by $(\mathsf{I}_{\bullet}|\mathsf{J}_{\bullet})$ indexed by pairs of partitions:
	$$\mathsf{I}_{\bullet}:=\mathsf{I}_1\sqcup\ldots\sqcup \mathsf{I}_k
	\text{ and }
	\mathsf{J}_{\bullet}:=\mathsf{J}_1\sqcup\ldots\sqcup \mathsf{J}_k$$
	of a set of cardinality $N=m_1+\ldots+m_k$ into subsets of sizes
	$m_1,\ldots,m_k$, respectively.
	Here $\mathsf{I}_\ell$ indexes the inputs of the $\ell$-th factor, while
	$\mathsf{J}_\ell$ indexes the outputs of the $\ell$-th factor.
	The differential removes one of the diamonds in~\eqref{eq::tw::QP::diamond} that means that we
	concatenate the corresponding sets
	$\mathsf{I}_\ell$ and $\mathsf{I}_{\ell+1}$
	(and, respectively, $\mathsf{J}_\ell$ and $\mathsf{J}_{\ell+1}$),
	and multiplies the result by an appropriate sign.
	
	We say that a pair of indices $(1 \leq i < i' \leq N)$ forms an
	\emph{inversion of inputs} (respectively, an \emph{inversion of outputs})
	if $i \in \mathsf{I}_s$ and $i' \in \mathsf{I}_t$ with $s>t$
	(respectively, $i \in \mathsf{J}_s$ and $i' \in \mathsf{J}_t$ with $s>t$).
	Consider the filtration $\euF_{\mathsf{inv}}$ of the bar complex
	$\mathsf{B}_{\tw}(\partial_{\cin}\partial_{\cout}\QP^{!})$
	by the total number of input inversions plus the number of output inversions.
Let us  consider the subcomplex  $\euF_{\mathsf{inv}}^{0}$ spanned by elements with zero incoming and outgoing inversions, and denote it by $\mathsf{B}_{\mathsf{planar}}(N,N)$.
Let us examine this complex more closely.
In this case, the partitions $(\mathsf{I}_{\bullet} \mid \mathsf{J}_{\bullet})$ must consist of subsets with strictly increasing elements.
More precisely, if $|\mathsf{I}_{l}| = m_l$, then
\[
\mathsf{I}_{l} = \{k_l+1, k_l+2, \ldots, k_l+m_l\},
\quad \text{where } k_l = m_1 + \ldots + m_{l-1}.
\]
Thus, instead of remembering the partitions themselves, it suffices to record the sizes of their blocks.
This yields the following explicit description of the complex:
\begin{multline*}
	\mathsf{B}_{\mathsf{planar}}(N,N) := \euF_{\mathsf{inv}}^{0}\Bigl(\mathsf{B}_{\tw}(\partial_{\cin}\partial_{\cout}\QP^{!})\Bigr) \simeq  \mathsf{Span}\langle (m_1|m_2|\ldots|m_k) \colon \sum m_l = N \rangle \\
	\text{with differential } \quad
	d (m_1|m_2|\ldots|m_k)
	=
	\sum_{l=1}^{k-1} (-1)^{l-1}
	(m_1| \ldots|m_{l-1}|m_l+m_{l+1}| \ldots | m_k).
\end{multline*}
Consider the assignment
\[
(m_1|m_2|\ldots|m_k)\longmapsto x^{m_1}\otimes \ldots \otimes x^{m_k},
\]
and extend it linearly.
This defines an isomorphism between $\mathsf{B}_{\mathsf{planar}}(N,N)$ and the graded component of degree $N$ of the bar complex of the associative algebra $\Bbbk[x]$.
Since the polynomial algebra $\Bbbk[x]$ is Koszul, it follows that the homology of $\mathsf{B}_{\mathsf{planar}}(N,N)$ is concentrated in the unique maximal homological degree $N$.

In the associated graded complex $\gr\euF_{\mathsf{inv}}\Bigl(\mathsf{B}_{\tw}(\partial_{\cin}\partial_{\cout}\QP^{!})\Bigr)$, only those concatenations of consecutive pieces
	\[
	(\mathsf{I}_\ell,\mathsf{J}_\ell)\sqcup(\mathsf{I}_{\ell+1},\mathsf{J}_{\ell+1})
	\longrightarrow
	(\mathsf{I}_\ell\cup\mathsf{I}_{\ell+1},\mathsf{J}_\ell\cup\mathsf{J}_{\ell+1})
	\]
	are retained for which
	$\max(\mathsf{I}_\ell)<\min(\mathsf{I}_{\ell+1})$
	and simultaneously
	$\max(\mathsf{J}_\ell)<\min(\mathsf{J}_{\ell+1})$.
	Consequently, the associated graded complex is isomorphic to a direct sum of tensor
	products of smaller complexes $\mathsf{B}_{\mathsf{planar}}(N,N)$:
	\[
	\gr\euF_{\mathsf{inv}}\Bigl(\mathsf{B}_{\tw}(\partial_{\cin}\partial_{\cout}\QP^{!})\Bigr)
	\simeq 
	\bigoplus_{k}
	\bigoplus_{\substack{(\mathsf{I}_{\bullet},\mathsf{J}_{\bullet})}}
	\mathsf{B}_{\mathsf{planar}}(\mathsf{I}_1,\mathsf{J}_1)\otimes \ldots \otimes
	\mathsf{B}_{\mathsf{planar}}(\mathsf{I}_k,\mathsf{J}_k),
	\]
	where the sum ranges over all pairs of partitions
	$(\mathsf{I}_{\bullet},\mathsf{J}_{\bullet})$
	of equal cardinalities satisfying the condition
	\[
	\forall \ell=1,\ldots,k-1 \quad
	\text{either } \max(\mathsf{I}_\ell)<\min(\mathsf{I}_{\ell+1})
	\text{ or } \max(\mathsf{J}_\ell)<\min(\mathsf{J}_{\ell+1}).
	\]
	It follows that the homology of the original bar complex
	$\mathsf{B}_{\tw}(\partial_{\cin}\partial_{\cout}\QP^{!})$
	is also concentrated in the minimal homological degree.
	
	The proof that the right module $\partial_{\cin}^2\partial_{\cout}\QP^{!}$ is Koszul
	proceeds along the same lines, using the filtration by inversions.
\end{proof}

\begin{corollary}
\label{cor::QP::Koszul}	
	The properad $\QP$ of quadratic Poisson structures is Koszul.
That is the following canonical homomorphism is a quasiisomorphism:
$$\Omega_{\prop}(\QP_{\prop}^{!}) = \Omega_{\prop}^{\diamond}(\QP_{\di}^{!}) \twoheadrightarrow \mathsf{U}_{\prop}(\QP_{\di}) = \QP_{\prop}.$$	
\end{corollary}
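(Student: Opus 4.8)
The plan is to deduce the Corollary directly from the sufficient condition of Theorem~\ref{thm::Prp::Tw}, applied to the Koszul dual dioperad $\calD := \QP_{\di}^{!}$. Its own quadratic dual recovers $\calD^{!} = \QP_{\di}$ by biduality, so that $\mathsf{U}_{\prop}(\calD^{!}) = \mathsf{U}_{\prop}(\QP_{\di}) = \QP$. Thus the whole argument reduces to checking the hypotheses of Theorem~\ref{thm::Prp::Tw} for $\calD = \QP_{\di}^{!}$ and reading off its conclusion, since the heavy homological work was already performed in Theorem~\ref{thm::Prp::Tw} and Theorem~\ref{thm::tw::QP}.

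First I would record that the single generator of $\QP_{\di}^{!}$ sits in bi-arity $(2,2)$, so the generating collection $\cE$ trivially satisfies $\cE(m,n)=0$ whenever $m=0$ or $n=0$; this is the only structural condition on the generators required by the theorem. Next I would invoke the explicit presentations computed in \S\ref{sec::QPois}: these exhibit the twisted associative algebra $\partial_{\cin}\partial_{\cout}\QP^{!}$ as quadratic, generated by a single element of bi-arity $(1,1)$ subject to the stated commutativity relation, and the right module $\partial_{\cin}^{2}\partial_{\cout}\QP^{!}$ as quadratic, generated by a single element of bi-arity $(0,1)$. Because the higher-genus operations of $\QP^{!}$ all vanish, as shown via the skew-symmetry relations in \S\ref{sec::QPois}, the partial derivatives of $\QP^{!}$ coincide with those of the dioperad $\QP_{\di}^{!}$, so these are precisely the data addressed by Theorem~\ref{thm::tw::QP}.

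With quadraticity in hand, I would then cite Theorem~\ref{thm::tw::QP} to supply the two Koszulness inputs: $\partial_{\cin}\partial_{\cout}\QP^{!}$ is Koszul as a twisted associative algebra and $\partial_{\cin}^{2}\partial_{\cout}\QP^{!}$ is Koszul as a right module over it. All hypotheses of Theorem~\ref{thm::Prp::Tw} are thereby satisfied, so the canonical projection
\[
\Omega_{\prop}^{\diamond}\bigl((\QP_{\di}^{!})^{*}\bigr)
\twoheadrightarrow
\mathsf{U}_{\prop}\bigl((\QP_{\di}^{!})^{!}\bigr)
= \mathsf{U}_{\prop}(\QP_{\di}) = \QP
\]
is a quasi-isomorphism. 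By Theorem~\ref{thm::prp::envelope} this shows that $\QP = \mathsf{U}_{\prop}(\QP_{\di})$ is a Koszul properad and that $\QP_{\di}$ is genus-contractible; the latter also justifies rewriting the source of the projection as $\Omega_{\prop}(\QP_{\prop}^{!})$, since $\QP_{\prop}^{!} = \mathsf{U}_{\prop}(\QP_{\di}^{!})$ then carries no higher-genus component and its cobar construction agrees with the small diamond cobar of the codioperad.

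Since the spectral-sequence and Maschke-type arguments are encapsulated in Theorems~\ref{thm::Prp::Tw} and~\ref{thm::tw::QP}, I do not expect a genuine obstacle at this final step. The only point demanding care is the bookkeeping identification $\partial_{\cin}\partial_{\cout}\QP^{!} = \partial_{\cin}\partial_{\cout}\QP_{\di}^{!}$, and likewise for the second derivative: this relies on the genus-contractibility of $\QP_{\di}^{!}$ established in \S\ref{sec::QPois}, which guarantees that marking one input and one output never probes a higher-genus operation, so that the twisted associative algebra built from the properad $\QP^{!}$ is the same as the one built from the dioperad $\QP_{\di}^{!}$ to which Theorem~\ref{thm::tw::QP} directly applies.
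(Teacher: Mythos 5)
Your proposal is correct and follows essentially the same route as the paper: the paper's one-line proof likewise deduces the corollary by feeding the Koszulness statements of Theorem~\ref{thm::tw::QP} into the sufficient condition for the properadic envelope and then invoking Theorem~\ref{thm::prp::envelope}. Your version is in fact slightly more careful than the printed proof (which cites the assumptions of Theorem~\ref{thm::prp::envelope} where the chain really passes through Theorem~\ref{thm::Prp::Tw}), and your remark on identifying $\partial_{\cin}\partial_{\cout}\QP^{!}$ with $\partial_{\cin}\partial_{\cout}\QP_{\di}^{!}$ via the vanishing of higher-genus operations is a legitimate bookkeeping point that the paper leaves implicit.
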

\begin{proof}
	Theorem~\ref{thm::tw::QP} explains that assumptions of Theorem~\ref{thm::prp::envelope} are satisfied for the dioperad of quadratic Poisson structures.
\end{proof}

In particular, we have the following simple description of the minimal resolution of the properad of Quadratic Poisson structures.
The $\Omega_{\prop}(\QP^{!})$ is the free properad (resp. dioperad) generated by the following (skew)symmetric $(n,n)$-corollas (with $n\geq 2$) of homological degree $1$:
$$
\begin{tikzpicture}[scale=0.08]
	
	% central vertex
	\node[circle,draw,inner sep=0.8pt] (v) at (0,0) {};
	
	% coordinates for top half-edges
	\coordinate (t1) at (-8,5);
	\coordinate (t2) at (-4.5,5);
	\coordinate (tdots) at (-1,5);
	\coordinate (t{n-1}) at (4.5,5);
	\coordinate (tn) at (8,5);
	
	% coordinates for bottom half-edges
	\coordinate (b1) at (-8,-5);
	\coordinate (b2) at (-4.5,-5);
	\coordinate (bdots) at (-1,-5);
	\coordinate (b{n-1}) at (4.5,-5);
	\coordinate (bn) at (8,-5);
	
	% edges upward
	\draw (v) -- (t1);
	\draw (v) -- (t2);
	\node at (tdots) {$\scriptstyle\ldots$};
	\draw (v) -- (t{n-1});
	\draw (v) -- (tn);
	
	% labels above
	\node at (-8.5,5.5) {$\scriptstyle\sigma_1$};
	\node at (-5,5.5)   {$\scriptstyle\sigma_2$};
	\node at (4.5,5.5)  {};
	\node at (9.0,5.5)  {$\scriptstyle\sigma_n$};
	
	% edges downward
	\draw (v) -- (b1);
	\draw (v) -- (b2);
	\node at (bdots) {$\scriptstyle\ldots$};
	\draw (v) -- (b{n-1});
	\draw (v) -- (bn);
	
	% labels below
	\node at (-8.5,-6.9) {$\scriptstyle\tau_1$};
	\node at (-5,-6.9)   {$\scriptstyle\tau_2$};
	\node at (4.5,-6.9)  {};
	\node at (9.0,-6.9)  {$\scriptstyle\tau_n$};
\end{tikzpicture}
=
(-1)^{\sigma} 
\begin{tikzpicture}[scale=0.08]
	
	% central vertex
	\node[circle,draw,inner sep=0.8pt] (v) at (0,0) {};
	
	% coordinates for top half-edges
	\coordinate (t1) at (-8,5);
	\coordinate (t2) at (-4.5,5);
	\coordinate (tdots) at (-1,5);
	\coordinate (t{n-1}) at (4.5,5);
	\coordinate (tn) at (8,5);
	
	% coordinates for bottom half-edges
	\coordinate (b1) at (-8,-5);
	\coordinate (b2) at (-4.5,-5);
	\coordinate (bdots) at (-1,-5);
	\coordinate (b{n-1}) at (4.5,-5);
	\coordinate (bn) at (8,-5);
	
	% edges upward
	\draw (v) -- (t1);
	\draw (v) -- (t2);
	\node at (tdots) {$\scriptstyle\ldots$};
	\draw (v) -- (t{n-1});
	\draw (v) -- (tn);
	
	% labels above
	\node at (-8.5,5.5) {$\scriptstyle 1$};
	\node at (-5,5.5)   {$\scriptstyle 2$};
	\node at (4.5,5.5)  {$\scriptstyle {n-1}$};
	\node at (9.0,5.5)  {$\scriptstyle n$};
	
	% edges downward
	\draw (v) -- (b1);
	\draw (v) -- (b2);
	\node at (bdots) {$\scriptstyle\ldots$};
	\draw (v) -- (b{n-1});
	\draw (v) -- (bn);
	
	% labels below
	\node at (-8.5,-6.9) {$\scriptstyle 1$};
	\node at (-5,-6.9)   {$\scriptstyle 2$};
	\node at (4.5,-6.9)  {$\scriptstyle {n-1}$};
	\node at (9.0,-6.9)  {$\scriptstyle n$};
\end{tikzpicture}
$$
whose differential is easy to explain pictorially
$$
d\left(
\begin{tikzpicture}[scale=0.10,baseline=0]
	% central vertex
	\node[circle,draw,inner sep=1pt] (v) at (0,0) {};
	
	% top legs
	\draw (v) -- (-8,5);
	\draw (v) -- (-4.5,5);
	\node at (-1,5) {$\scriptstyle\ldots$};
	\draw (v) -- (4.5,5);
	\draw (v) -- (8,5);
	
	% top labels
	\node at (-8.5,5.5) {$\scriptstyle 1$};
	\node at (-5,5.5)  {$\scriptstyle 2$};
	\node at (4.5,5.5) {$\scriptstyle n-1$};
	\node at (9.0,5.5) {$\scriptstyle n$};
	
	% bottom legs
	\draw (v) -- (-8,-5);
	\draw (v) -- (-4.5,-5);
	\node at (-1,-5) {$\scriptstyle\ldots$};
	\draw (v) -- (4.5,-5);
	\draw (v) -- (8,-5);
	
	% bottom labels
	\node at (-8.5,-6.9) {$\scriptstyle 1$};
	\node at (-5,-6.9)  {$\scriptstyle 2$};
	\node at (4.5,-6.9) {$\scriptstyle n-1$};
	\node at (9.0,-6.9) {$\scriptstyle n$};
\end{tikzpicture}
\right)
=
\sum_{{[1,\ldots,n]= I_1\sqcup I_2= J_1\sqcup J_2}\atop 
	{|I_1|= |J_1|-1\geq 0, |I_2| -1= |J_2| \geq 0}}
\pm
\begin{tikzpicture}[scale=0.10,baseline=0]
	
	% Left vertex -------------------------------------------------
	\node[circle,draw,inner sep=1pt] (v1) at (0,0) {};
	
	% top legs of v1
	\draw (v1) -- (-8,5);
	\draw (v1) -- (-4.5,5);
	\node at (0,5) {$\scriptstyle\ldots$};
	\draw (v1) -- (4.5,5);
	\draw (v1) -- (12.4,4.8);
	
	% I1 overbrace
	\draw[decorate,decoration={brace,raise=1pt}] (-8,5.8) -- (4.5,5.8);
	\node at (-1.5,7) {$\scriptstyle I_1$};
	
	% bottom legs of v1
	\draw (v1) -- (-8,-5);
	\draw (v1) -- (-4.5,-5);
	\node at (-1,-5) {$\scriptstyle\ldots$};
	\draw (v1) -- (4.5,-5);
%	\draw (v1) -- (8,-5);
	
	% J1 underbrace
	\draw[decorate,decoration={brace,mirror,raise=1pt}] (-8,-6.5) -- (4.5,-6.5);
	\node at (-1.5,-9) {$\scriptstyle J_1$};
	
	% Right vertex -------------------------------------------------
	\node[circle,draw,inner sep=1pt] (v2) at (13,5) {};
	
	% top legs of v2
	\draw (v2) -- (5,10);
	\draw (v2) -- (8.5,10);
	\node at (13,10) {$\scriptstyle\ldots$};
	\draw (v2) -- (16.5,10);
	\draw (v2) -- (20,10);
	
	% I2 overbrace
	\draw[decorate,decoration={brace,raise=1pt}] (5,11.5) -- (20,11.5);
	\node at (12.5,13) {$\scriptstyle I_2$};
	
	% middle wires connecting v2 downwards
	\draw (v2) -- (8,0);
	\node at (12,1) {$\scriptstyle\ldots$};
	\draw (v2) -- (16.5,0);
	\draw (v2) -- (20,0);
	
	% J2 underbrace
	\draw[decorate,decoration={brace,mirror,raise=1pt}] (8,-1.5) -- (20,-1.5);
	\node at (14,-4) {$\scriptstyle J_2$};
\end{tikzpicture}
$$

\subsection{Properad of quadratic--linear Poisson structures}

Suppose that a Poisson bivector $\pi$, defined in~\eqref{eq::Pois::bi} on a vector space $V$, has both quadratic and linear terms, that is,
\[
\pi_{ij}=c_{ij}^{kl} x^k x^l + d_{ij}^{l} x^{l} \text{ with } \pi := \sum_{i,j} \pi_{ij}\,
\frac{\partial}{\partial x^i} \wedge 
\frac{\partial}{\partial x^j},
\]
for appropriate constants $c_{ij}^{kl}$ and $d_{ij}^{l}$.
One observes that Equation~\eqref{eq::Pois:Id} decomposes into exactly three homogeneous components of degrees $1$, $2$, and $3$ in the coordinates $x^i$.
The cubic term in $x$ yields a quadratic relation on the constants $c_{ij}^{kl}$, expressing the fact that they define a quadratic Poisson structure on $V$.
The linear term in $x$ gives a quadratic relation on the constants $d_{ij}^{l}$, asserting that they determine a Lie algebra structure on $V$.
Finally, the quadratic term in $x$ in~\eqref{eq::Pois:Id} is linear in both $c_{ij}^{kl}$ and $d_{ij}^{l}$ and encodes the compatibility between the quadratic and linear components of the Poisson bivector.

The corresponding properad, which we denote by $\QLP$, is generated by two operations:
$$\nu = \multPic{1}{2} = - \multPic{2}{1} \in \QLP(2,1) \text{ and } \mu = 	\qpgen{1}{2}{1}{2} = (-1)^{\sigma} \qpgen{$\sigma(1)$}{$\sigma(2)$}{$\tau(1)$}{$\tau(2)$}  \in \QLP(2,2).
$$
The generator $\nu$ encodes the properties of the Lie bracket:
$$
   \mLiePic{1}{2}{3} +  \mLiePic{2}{3}{1} + \mLiePic{3}{1}{2} = 0
$$
 while $\mu$ defines a quadratic Poisson structure (that is, it satisfies Relation~\eqref{eq::qp::rel::pic}):
 $$
 \sum_{\sigma,\tau\in\bS_3}
 (-1)^{\sigma}
 \begin{tikzpicture}[scale=0.6]
 	\node[int] (v) at (0,0) {};
 	\node[int] (u) at (1,.5) {}; 
 	\node (v1) at (-.5,1) {\scriptsize {$\sigma(1)$}};
 	\node (v2) at (.5,1.5) {\scriptsize {$\sigma(2)$}};
 	\node (v3) at (1.5,1.5) {\scriptsize {$\sigma(3)$}};
 	\node (u1) at (-.5,-1) {\scriptsize {$\tau(1)$}};
 	\node (u2) at (.5,-1) {\scriptsize {$\tau(2)$}};
 	\node (u3) at (1.5, -0.5) {\scriptsize {$\tau(3)$}};
 	\draw (u1) -- (v);
 	\draw (u2) -- (v);
 	\draw (u3) -- (u);
 	\draw (v) -- (u);
 	\draw (v) -- (v1);
 	\draw (u) -- (v2);
 	\draw (u) -- (v3);
 \end{tikzpicture} = 0
 $$
In addition, there is a compatibility relation between the linear and quadratic parts of the Poisson bivector:
$$
 \sum_{\sigma\in\bS_3} 
(-1)^{\sigma}
\begin{tikzpicture}[scale=0.6]
	\node[int] (v) at (0,0) {};
	\node[int] (u) at (1,.5) {}; 
	\node (v1) at (-.5,1) {\scriptsize {$\sigma(1)$}};
	\node (v2) at (.5,1.5) {\scriptsize {$\sigma(2)$}};
	\node (v3) at (1.5,1.5) {\scriptsize {$\sigma(3)$}};
	\node (u1) at (-.5,-1) {\scriptsize {$1$}};
	\node (u2) at (.5,-1) {\scriptsize {$2$}};
%	\node (u3) at (1.5, -0.5) {\scriptsize {$\tau(3)$}};
	\draw (u1) -- (v);
	\draw (u2) -- (v);
%	\draw (u3) -- (u);
	\draw (v) -- (u);
	\draw (v) -- (v1);
	\draw (u) -- (v2);
	\draw (u) -- (v3);
\end{tikzpicture}
+
 \sum_{\sigma\in\bS_3} \sum_{\tau\in\bS_2}
(-1)^{\sigma}
\begin{tikzpicture}[scale=0.6]
	\node[int] (v) at (0,0) {};
	\node[int] (u) at (1,.5) {}; 
	\node (v1) at (-.5,1) {\scriptsize {$\sigma(1)$}};
	\node (v2) at (.5,1.5) {\scriptsize {$\sigma(2)$}};
	\node (v3) at (1.5,1.5) {\scriptsize {$\sigma(3)$}};
	\node (u1) at (0,-1) {\scriptsize {$\tau(1)$}};
%	\node (u2) at (.5,-1) {\scriptsize {$\tau(2)$}};
	\node (u3) at (1.5, -0.5) {\scriptsize {$\tau(2)$}};
	\draw (u1) -- (v);
%	\draw (u2) -- (v);
	\draw (u3) -- (u);
	\draw (v) -- (u);
	\draw (v) -- (v1);
	\draw (u) -- (v2);
	\draw (u) -- (v3);
\end{tikzpicture}
=0
$$

\begin{lemma}
	The dioperad $\QLP^{!}$ coincides with its properadic envelope (in particular, it is genus--contractible).
	Moreover, all components with a fixed number of inputs and outputs are at most one--dimensional:
	\[
	\dim \QLP(m,n)^{!} =
	\begin{cases}
		1, & \text{if } m \geq n \geq 1, \\
		0, & \text{otherwise}.
	\end{cases}
	\]
\end{lemma}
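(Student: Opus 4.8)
The plan is to reduce everything to three ingredients: the explicit quadratic-dual presentation of $\QLP^{!}$, a symmetry argument that kills all positive-genus compositions, and a combinatorial collapse forcing each genus-zero component to be at most one-dimensional. First I would compute the dual presentation. Dualizing the Jacobi relation turns the antisymmetric bracket $\nu$ into a \emph{symmetric} (commutative, associative) product $\nu^{!}\in\QLP^{!}(2,1)$, exactly as in the classical identity $\Lie^{!}=\Com$ (the homological shifts being implicit). Dualizing the quadratic Poisson relation gives the generator $\mu^{!}\in\QLP^{!}(2,2)$, symmetric in its two inputs and antisymmetric in its two outputs, with all of its two-vertex tree compositions proportional, exactly as recorded for $\QP_{\di}^{!}$ above. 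Finally, dualizing the mixed compatibility relation yields a single relation which, together with commutativity of $\nu^{!}$, forces the trees $\mu^{!}(\nu^{!}(x_i,x_j),x_k)$ to coincide for all choices of $\{i,j,k\}$; I would record it in the normalized form expressing that $\mu^{!}\circ_{1}\nu^{!}$ is symmetric in the three incoming inputs.

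Next I would show that the properadic envelope adds nothing, i.e.\ $\mathsf{U}_{\prop}(\QLP^{!})=\QLP^{!}$. Since $\mu^{!}$ is the only generator with two outputs, every positive-genus graph contains a vertex from which two edges of an undirected cycle emanate, and such a vertex is an effective composite ending in a $\mu^{!}$. Following the strategy already used for $\QP^{!}$, I would contract the tree portions of a minimal cycle to reduce to two effective operations joined by two parallel edges; interchanging those two edges is the identity on the underlying graph, yet it acts by $-1$ on the upper operation (antisymmetry in the two outputs it emits) and by $+1$ on the lower one (symmetry in the two inputs it absorbs, whether the merging vertex is a $\nu^{!}$ or a $\mu^{!}$). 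Hence each such element equals its own negative and vanishes, so all positive-genus compositions are zero and $\QLP^{!}$ is genus-contractible. The one input needed here is that genus-zero operations are antisymmetric under interchanging any two outputs, which I would fold into the induction of the next step.

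It then remains to compute the genus-zero dimensions. Counting the excess $\sum_{v}\bigl(\cin(v)-\cout(v)\bigr)$ over all vertices shows that $m-n$ equals the number of $\nu^{!}$-vertices, so the component is empty unless $m\ge n$, which gives the vanishing half of the formula. For the upper bound and the nonvanishing I would argue, by induction on the number of vertices, that every genus-zero tree is equal modulo the relations, up to sign, to a single normal form: assemble the unique $\QP^{!}$-backbone on the $n$ outputs (one-dimensional by the $\QP_{\di}^{!}$ computation) and attach the remaining $m-n$ inputs through commutative $\nu^{!}$-products. Commutativity and associativity of $\nu^{!}$ collapse the $\nu^{!}$-part, the $\QP^{!}$ relations collapse the $\mu^{!}$-backbone, and the mixed relation is exactly what lets one slide a $\nu^{!}$ across a $\mu^{!}$ and thereby symmetrize the attachment points. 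The same induction yields full symmetry in inputs and antisymmetry in outputs, so each component is at most one-dimensional and carries the representation $\mathbb{1}_m\otimes\sgn_n$; nonvanishing then follows from the surjection of $\QLP^{!}$ onto the dioperad supported on $\{\mathbb{1}_m\otimes\sgn_n : m\ge n\ge1\}$ with its canonical compositions, which one checks directly to satisfy all three defining relations.

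I expect the genuine difficulty to lie in the collapse of the previous paragraph: extracting the precise dual mixed relation and verifying that it, together with commutativity of $\nu^{!}$, identifies all the ways of feeding $\nu^{!}$-products into the $\mu^{!}$-backbone. This is where an apparent overcount (for instance the three a priori distinct trees in $\QLP^{!}(3,2)$) must be shown to reduce to a single class. A clean bookkeeping device would be to phrase the whole reduction as a distributive law between $\Com$ (on $\nu^{!}$) and $\QP_{\di}^{!}$ (on $\mu^{!}$) and to verify the corresponding confluence condition, which would simultaneously produce the normal form, the dimension count, and the $\bS_m\times\bS_n$-module structure.
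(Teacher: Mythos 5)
Your proposal is correct in outline, and it supplies essentially all the content that the paper compresses into the single phrase ``a straightforward computation''; there is no detailed argument in the paper to diverge from. Your three ingredients are the right ones, and the genus-killing step is exactly the mechanism the paper itself uses for $\QP^{!}$: reduce any positive-genus composition to a last step in which a genus-zero composite feeds two parallel edges into a single generator, and observe that swapping the two edges acts by $-1$ on the (fully output-antisymmetric) upper composite and by $+1$ on the (input-symmetric) lower generator. Two cautions. First, the apparent circularity you flag must be resolved by ordering the induction as: compute the \emph{dioperad} $\QLP^{!}_{\di}$ completely (dimensions and the $\mathbb{1}_m\otimes\sgn_n$ symmetry type) before touching the envelope; only then does the double-edge argument apply to an arbitrary positive-genus graph, by composing its vertices in a topological order and looking at the first step that creates genus. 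Second, your normalized form of the dual mixed relation (``$\mu^{!}\circ_{1}\nu^{!}$ is symmetric in the three incoming inputs'') captures only half of what the one-dimensionality of the relation space in biarity $(3,2)$ gives you: since the original compatibility relation has nonzero components on \emph{both} tree types, its annihilator also identifies every tree with $\mu^{!}$ feeding into $\nu^{!}$ with a scalar multiple of one with $\nu^{!}$ feeding into $\mu^{!}$. That second identification is precisely the rewriting rule that pushes all $\nu^{!}$'s below the $\mu^{!}$-backbone, and it is what makes your normal form reachable; without it the excess count $m-n=\#\{\nu^{!}\text{-vertices}\}$ only bounds the support, not the dimension. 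Your suggestion to package the collapse as a distributive law between $\Com$ and $\QP^{!}_{\di}$ and check confluence in weight three is the cleanest way to finish, and the weight-three check is the only place where an actual computation remains.
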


\begin{proof}
	A straightforward computation.
\end{proof}

\begin{proposition}
	The dioperad $\QLP^{!}$ satisfies the assumptions of Theorem~\ref{thm::Prp::Tw}, and consequently the properad $\QLP$ is Koszul.
\end{proposition}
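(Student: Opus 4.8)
The plan is to verify directly the two hypotheses of Theorem~\ref{thm::Prp::Tw} for the dioperad $\calD=\QLP^{!}$; once this is done the conclusion that $\mathsf{U}_{\prop}\bigl((\QLP^{!})^{!}\bigr)=\QLP$ is Koszul (and that $(\QLP^{!})^{!}$ is genus-contractible) is immediate. The first, purely combinatorial, step is quadraticity and generation. From the dimension formula of the preceding lemma, $\dim\QLP^{!}(m,n)=1$ for $m\ge n\ge 1$ and $0$ otherwise, one reads off
\[
\dim\bigl(\partial_{\cin}\partial_{\cout}\QLP^{!}(m,n)\bigr)=\begin{cases}1,& m\ge n\ge 0,\\ 0,&\text{otherwise},\end{cases}
\qquad
\dim\bigl(\partial_{\cin}^2\partial_{\cout}\QLP^{!}(m,n)\bigr)=\begin{cases}1,& 0\le n\le m+1,\\ 0,&\text{otherwise}.\end{cases}
\]
Since every component is at most one-dimensional, the twisted associative algebra $\partial_{\cin}\partial_{\cout}\QLP^{!}$ is generated by the two classes $a:=\partial_{\cin}\partial_{\cout}\mu$ of bi-arity $(1,1)$ and $b:=\partial_{\cin}\partial_{\cout}\nu$ of bi-arity $(1,0)$ coming from $\partial_{\cin}\partial_{\cout}\cE$, and all its defining relations are quadratic: in each bi-arity the possible monomials in $a$ and $b$ are forced to be proportional. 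In the same way the right module $\partial_{\cin}^2\partial_{\cout}\QLP^{!}$ is quadratic and generated by $\partial_{\cin}^2\partial_{\cout}\cE$, namely by the class $u$ of bi-arity $(0,1)$ attached to $\mu$ and the class $w$ of bi-arity $(0,0)$ attached to $\nu$. This is entirely parallel to the computation carried out for $\QP^{!}$ and settles the ``quadratic and generated by'' clauses of both hypotheses.

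It remains to establish the two Koszulness statements, and here the plan is to rerun the argument of Theorem~\ref{thm::tw::QP} verbatim, with the one-variable polynomial algebra $\kk[x]$ replaced by the two-variable polynomial algebra $\kk[x,y]$. As there, I would describe $\mathsf{B}_{\tw}(\partial_{\cin}\partial_{\cout}\QLP^{!})$ through a basis indexed by pairs of ordered set partitions, now decorated by recording in each block of bi-arity $(m_i,n_i)$ which of the $m_i$ inputs are matched by one of the $n_i$ outputs, and then filter by the total number of input inversions plus output inversions. On the inversion-free (planar) associated graded each block is determined by its bi-arity alone, so the planar subcomplex is spanned by sequences $(m_1,n_1\mid\cdots\mid m_k,n_k)$ with $m_i\ge n_i\ge 0$, with differential given by concatenation of adjacent blocks. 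Sending a block $(m_i,n_i)$ to the monomial $x^{n_i}y^{\,m_i-n_i}$, with $x$ of bidegree $(1,1)$ corresponding to $a$ and $y$ of bidegree $(1,0)$ corresponding to $b$, identifies this planar complex with the bar complex of $\kk[x,y]$ in the multidegree $(M,N-M)$ determined by the bi-arity $(N,M)$. Since $\kk[x,y]$ is Koszul, the planar complex has homology concentrated in the maximal homological degree; the full associated graded then splits, exactly as in Theorem~\ref{thm::tw::QP}, into a direct sum of tensor products of such planar complexes (the factorization being governed by the condition that at each junction either the inputs or the outputs strictly increase), and Maschke's theorem forces the homology of $\mathsf{B}_{\tw}(\partial_{\cin}\partial_{\cout}\QLP^{!})$ to be concentrated in the minimal homological degree. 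The module statement is handled by the same inversions filtration, reducing $\mathsf{B}_{\tw}(\partial_{\cin}\partial_{\cout}\QLP^{!},\,\partial_{\cin}^2\partial_{\cout}\QLP^{!})$ to the bar complex of $\kk[x,y]$ with coefficients in the Koszul module spanned by the two classes $u$ and $w$, which is again acyclic off the minimal degree.

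The main obstacle is not the homological algebra --- once the planar complex is correctly identified with the $\kk[x,y]$ bar complex, Koszulness of $\kk[x,y]$ does all the work --- but rather the sign-and-symmetry bookkeeping justifying that identification. Concretely, I must check that after passing to the inversion-free associated graded the two generators genuinely \emph{commute}, so that the target really is the commutative algebra $\kk[x,y]$ and not a free or only partially commutative one: this rests on the one-dimensionality of $\partial_{\cin}\partial_{\cout}\QLP^{!}(2,1)$, which forces $ab=\pm ba$, together with a careful tracking of the $\mathsf{Sgn}$ representation inherited from $\mu$ and of the symmetry inherited from $\nu$, so that the signs in the planar differential agree with those of the reduced bar differential of $\kk[x,y]$. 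For the module the analogous subtlety is to pin down the precise $\kk[x,y]$-module structure on the span of $u$ and $w$, together with the relation identifying their overlapping images in bi-arities $1\le n\le m$, and to confirm that the resulting module is Koszul over $\kk[x,y]$. Both are finite computations of exactly the type already performed for $\QP^{!}$, so no new phenomenon is expected to arise, and invoking Theorem~\ref{thm::Prp::Tw} completes the proof.
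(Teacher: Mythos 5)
Your proposal is correct and follows essentially the same route as the paper, whose entire proof is the single sentence that the argument is ``completely analogous'' to that of Theorem~\ref{thm::tw::QP}; you have simply carried out that analogy explicitly, correctly identifying the one genuinely new ingredient as the replacement of $\kk[x]$ by the (still Koszul) polynomial algebra $\kk[x,y]$ in the planar subcomplex, with the extra generator $b$ of bi-arity $(1,0)$ accounting for the Lie-bracket generator $\nu$. The dimension counts, the identification of the quadratic module relation between $u\cdot b$ and $w\cdot a$ in bi-arity $(1,1)$, and the inversion filtration all match the paper's intended argument, so no gap remains.
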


\begin{proof}
	The proof is completely analogous to that of Theorem~\ref{thm::tw::QP}.
\end{proof}

In particular, the minimal resolution of the properad $\QLP$ is the free properad (respectively, dioperad) generated by the following (skew)symmetric $(m,n)$--corollas  of homological degree $1$:
\[
\begin{tikzpicture}[scale=0.08]
	% central vertex
	\node[circle,draw,inner sep=0.8pt] (v) at (0,0) {};
	% coordinates for top half-edges
	\coordinate (t1) at (-8,5);
	\coordinate (t2) at (-4.5,5);
	\coordinate (tdots) at (-1,5);
	\coordinate (t{n-1}) at (4.5,5);
	\coordinate (tn) at (8,5);
	% coordinates for bottom half-edges
	\coordinate (b1) at (-8,-5);
	\coordinate (b2) at (-4.5,-5);
	\coordinate (bdots) at (-1,-5);
	\coordinate (b{n-1}) at (4.5,-5);
	\coordinate (bn) at (8,-5);
	% edges upward
	\draw (v) -- (t1);
	\draw (v) -- (t2);
	\node at (tdots) {$\scriptstyle\ldots$};
	\draw (v) -- (t{n-1});
	\draw (v) -- (tn);
	% labels above
	\node at (-8.5,5.5) {$\scriptstyle\sigma_1$};
	\node at (-5,5.5)   {$\scriptstyle\sigma_2$};
	\node at (9.0,5.5)  {$\scriptstyle\sigma_m$};
	% edges downward
	\draw (v) -- (b1);
	\draw (v) -- (b2);
	\node at (bdots) {$\scriptstyle\ldots$};
	\draw (v) -- (b{n-1});
	\draw (v) -- (bn);
	% labels below
	\node at (-8.5,-6.9) {$\scriptstyle\tau_1$};
	\node at (-5,-6.9)   {$\scriptstyle\tau_2$};
	\node at (9.0,-6.9)  {$\scriptstyle\tau_n$};
\end{tikzpicture}
=
(-1)^{\sigma}
\begin{tikzpicture}[scale=0.08]
	% central vertex
	\node[circle,draw,inner sep=0.8pt] (v) at (0,0) {};
	% coordinates for top half-edges
	\coordinate (t1) at (-8,5);
	\coordinate (t2) at (-4.5,5);
	\coordinate (tdots) at (-1,5);
	\coordinate (t{n-1}) at (4.5,5);
	\coordinate (tn) at (8,5);
	% coordinates for bottom half-edges
	\coordinate (b1) at (-8,-5);
	\coordinate (b2) at (-4.5,-5);
	\coordinate (bdots) at (-1,-5);
	\coordinate (b{n-1}) at (4.5,-5);
	\coordinate (bn) at (8,-5);
	% edges upward
	\draw (v) -- (t1);
	\draw (v) -- (t2);
	\node at (tdots) {$\scriptstyle\ldots$};
	\draw (v) -- (t{n-1});
	\draw (v) -- (tn);
	% labels above
	\node at (-8.5,5.5) {$\scriptstyle 1$};
	\node at (-5,5.5)   {$\scriptstyle 2$};
	\node at (4.5,5.5)  {$\scriptstyle {m-1}$};
	\node at (9.0,5.5)  {$\scriptstyle m$};
	% edges downward
	\draw (v) -- (b1);
	\draw (v) -- (b2);
	\node at (bdots) {$\scriptstyle\ldots$};
	\draw (v) -- (b{n-1});
	\draw (v) -- (bn);
	% labels below
	\node at (-8.5,-6.9) {$\scriptstyle 1$};
	\node at (-5,-6.9)   {$\scriptstyle 2$};
	\node at (4.5,-6.9)  {$\scriptstyle {n-1}$};
	\node at (9.0,-6.9)  {$\scriptstyle n$};
\end{tikzpicture}
\text{with $m \geq n \geq 1$ and $m+n \geq 3$}
\]
whose differential admits a transparent graphical description:
\[
d\left(
\begin{tikzpicture}[scale=0.10,baseline=0]
	% central vertex
	\node[circle,draw,inner sep=1pt] (v) at (0,0) {};
	% top legs
	\draw (v) -- (-8,5);
	\draw (v) -- (-4.5,5);
	\node at (-1,5) {$\scriptstyle\ldots$};
	\draw (v) -- (4.5,5);
	\draw (v) -- (8,5);
	% top labels
	\node at (-8.5,5.5) {$\scriptstyle 1$};
	\node at (-5,5.5)  {$\scriptstyle 2$};
	\node at (9.0,5.5) {$\scriptstyle m$};
	% bottom legs
	\draw (v) -- (-8,-5);
	\draw (v) -- (-4.5,-5);
	\node at (-1,-5) {$\scriptstyle\ldots$};
	\draw (v) -- (4.5,-5);
	\draw (v) -- (8,-5);
	% bottom labels
	\node at (-8.5,-6.9) {$\scriptstyle 1$};
	\node at (-5,-6.9)  {$\scriptstyle 2$};
	\node at (9.0,-6.9) {$\scriptstyle n$};
\end{tikzpicture}
\right)
=
\sum_{\substack{[1,\ldots,m]= I_1\sqcup I_2 \\ [1,\ldots,n]= J_1\sqcup J_2 \\ |I_1|+1\geq |J_1|\geq 1,\; |I_2| > |J_2| \geq  0}}
\pm
\begin{tikzpicture}[scale=0.10,baseline=0]
	% Left vertex
	\node[circle,draw,inner sep=1pt] (v1) at (0,0) {};
	% top legs of v1
	\draw (v1) -- (-8,5);
	\draw (v1) -- (-4.5,5);
	\node at (0,5) {$\scriptstyle\ldots$};
	\draw (v1) -- (4.5,5);
	\draw (v1) -- (12.4,4.8);
	% I1 brace
	\draw[decorate,decoration={brace,raise=1pt}] (-8,5.8) -- (4.5,5.8);
	\node at (-1.5,7) {$\scriptstyle I_1$};
	% bottom legs of v1
	\draw (v1) -- (-8,-5);
	\draw (v1) -- (-4.5,-5);
	\node at (-1,-5) {$\scriptstyle\ldots$};
	\draw (v1) -- (4.5,-5);
%	\draw (v1) -- (8,-5);
	% J1 brace
	\draw[decorate,decoration={brace,mirror,raise=1pt}] (-8,-6.5) -- (4.5,-6.5);
	\node at (-1.5,-9) {$\scriptstyle J_1$};
	% Right vertex
	\node[circle,draw,inner sep=1pt] (v2) at (13,5) {};
	% top legs of v2
	\draw (v2) -- (5,10);
	\draw (v2) -- (8.5,10);
	\node at (13,10) {$\scriptstyle\ldots$};
	\draw (v2) -- (16.5,10);
	\draw (v2) -- (20,10);
	% I2 brace
	\draw[decorate,decoration={brace,raise=1pt}] (5,11.5) -- (20,11.5);
	\node at (12.5,13) {$\scriptstyle I_2$};
	% middle wires
	\draw (v2) -- (8,0);
	\node at (12,1) {$\scriptstyle\ldots$};
	\draw (v2) -- (16.5,0);
	\draw (v2) -- (20,0);
	% J2 brace
	\draw[decorate,decoration={brace,mirror,raise=1pt}] (8,-1.5) -- (20,-1.5);
	\node at (14,-4) {$\scriptstyle J_2$};
\end{tikzpicture}
\]

\subsection{Other known examples of Koszul properads}
\label{sec::examples}

The properad of Lie bialgebras provides the first example of a dioperad whose properadic envelope was shown to be Koszul, as explained in~\cite{Markl_Voronov}.
Their proof is based on the idea of $\frac{1}{2}$--PROPs due to Maxim Kontsevich~\cite{Konts_letter}.
We claim that our method applies to this example as well.

\begin{proposition}
	The dioperad of Frobenius structures satisfies the assumptions of Theorem~\ref{thm::prp::envelope}, and hence the morphism
	$\Omega_{\prop}^{\diamond}(\Frob)\rightarrow \LieBi$
	is a quasi-isomorphism.
\end{proposition}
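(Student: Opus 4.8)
The plan is to reproduce, almost verbatim, the strategy of Theorem~\ref{thm::tw::QP} and Corollary~\ref{cor::QP::Koszul}, now taking for $\calD$ the genus-$0$ Frobenius dioperad $\Frob_{\di}$. Its quadratic dual dioperad is the Lie bialgebra dioperad, $\Frob_{\di}^{!}=\LieBi_{\di}$, whose properadic envelope is $\mathsf{U}_{\prop}(\LieBi_{\di})=\LieBi$. Thus, once the hypotheses of Theorem~\ref{thm::Prp::Tw} are checked for $\Frob_{\di}$, that theorem produces the desired quasi-isomorphism $\Omega_{\prop}^{\diamond}(\Frob)\to\LieBi$ and, through Theorem~\ref{thm::prp::envelope}, the Koszulness of $\LieBi$ together with the genus-contractibility of $\Frob_{\di}^{!}=\LieBi_{\di}$. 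The latter conclusion is perfectly consistent with the earlier observation: it is $\Frob_{\di}$, not its dual, that fails to be genus-contractible.

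First I would record dimensions. Using associativity, coassociativity and the Frobenius compatibility, every connected genus-$0$ word in the product and the coproduct is determined up to sign by its bi-arity, so $\Frob_{\di}(m,n)=\kk$ for all $m,n\ge 1$ and vanishes otherwise. Hence
\[
\partial_{\cin}\partial_{\cout}\Frob_{\di}(m,n)=\Frob_{\di}(m{+}1,n{+}1)=\kk,
\qquad
\partial_{\cin}^{2}\partial_{\cout}\Frob_{\di}(m,n)=\Frob_{\di}(m{+}2,n{+}1)=\kk
\qquad(m,n\ge 0),
\]
so both collections are one-dimensional in every bi-arity, exactly as for $\QP^{!}$. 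The twisted associative algebra $\partial_{\cin}\partial_{\cout}\Frob_{\di}$ is therefore generated by two weight-$1$ elements, the frozen product $a$ of bi-arity $(1,0)$ and the frozen coproduct $b$ of bi-arity $(0,1)$; since each weight-$2$ component is one-dimensional, the only relation is the twisted commutativity $ab=\pm ba$ dictated by the symmetric-group actions. In other words $\partial_{\cin}\partial_{\cout}\Frob_{\di}$ is a twisted polynomial algebra on the two commuting generators $a,b$, and $\partial_{\cin}^{2}\partial_{\cout}\Frob_{\di}$ is the cyclic module generated by the doubly-frozen product.

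Koszulness I would then deduce from the inversion filtration of Theorem~\ref{thm::tw::QP}. Filtering $\mathsf{B}_{\tw}(\partial_{\cin}\partial_{\cout}\Frob_{\di})$ by the total number of input and output inversions, the inversion-free (planar) part remembers only the bi-arities $(m_1,n_1),\ldots,(m_k,n_k)$ of the consecutive blocks and is identified, via $(m_1,n_1\mid\ldots\mid m_k,n_k)\mapsto a^{m_1}b^{n_1}\otimes\cdots\otimes a^{m_k}b^{n_k}$, with a graded piece of the bar complex of the polynomial algebra $\kk[a,b]$. As $\kk[a,b]$ is Koszul, this planar complex has homology concentrated in a single homological degree, and the associated graded of the full complex splits as a sum of tensor products of such planar pieces; hence $\mathsf{B}_{\tw}(\partial_{\cin}\partial_{\cout}\Frob_{\di})$ has homology in the minimal degree. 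The identical argument with coefficients in the cyclic module $\partial_{\cin}^{2}\partial_{\cout}\Frob_{\di}$, together with Proposition~\ref{prp::2tw::diop}, settles the module case, so both hypotheses of Theorem~\ref{thm::Prp::Tw} are verified and the proposition follows.

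The genuine obstacle is the quadraticity claim, namely that $\partial_{\cin}\partial_{\cout}\Frob_{\di}$ really is generated by $\partial_{\cin}\partial_{\cout}\cE$ with relations concentrated in weight $2$, and likewise for the module. As the paper postpones the general (Gr\"obner-type) criterion to future work, this must be checked by hand: one has to exclude any generator or relation of $\partial_{\cin}\partial_{\cout}\Frob_{\di}$ beyond $a,b$ and $ab=\pm ba$, which amounts to showing that every element of $\Frob_{\di}(m{+}1,n{+}1)$ is obtained from the frozen product and coproduct by compositions through the frozen slots. This normal-form statement follows from the one-dimensionality above, but the careful bookkeeping of signs and of the $\bS^{\mathsf{op}}\times\bS$-actions is the only delicate point of the argument.
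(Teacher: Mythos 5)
Your proposal is correct and follows essentially the same route as the paper: the paper's own proof is just the remark that the quadratic-Poisson argument goes through verbatim, the only change being that the planar pieces $\mathsf{B}_{\mathsf{planar}}(\mathsf{I},\mathsf{J})$ may now have $|\mathsf{I}|\neq|\mathsf{J}|$, which is exactly your identification of the inversion-free part of the bar complex with the corresponding multigraded piece of the bar complex of the two-variable polynomial algebra $\Bbbk[a,b]$ rather than $\Bbbk[x]$. Your closing caveat that the quadraticity of $\partial_{\cin}\partial_{\cout}\Frob_{\di}$ and of its module must be checked by hand is fair, but the paper leaves that point equally implicit.
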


\begin{proof}
	The proof closely follows the argument used for quadratic Poisson structures.
	The only difference is that the cardinalities of the subsets of inputs $\mathsf{I}$ and outputs $\mathsf{J}$ in
	$\mathsf{B}_{\mathsf{planar}}(\mathsf{I},\mathsf{J})$ may now be different.
\end{proof}

There are several generalizations of the properad of Lie bialgebras, such as quasi--Lie bialgebras~\cite{D2} and pseudo--Lie bialgebras, whose Koszul property was established in~\cite{Granacker} using similar $\frac{1}{2}$--PROP techniques.
We expect that our methods can be adapted to these cases as well, by extending the notion of a twisted associative algebra to include operations with zero inputs or zero outputs.

Finally, we also expect that the following dioperads satisfy the assumptions of Theorem~\ref{thm::prp::envelope}:
\begin{itemize}[itemsep=0pt, topsep=0pt]
	\item the protoperad of double commutative algebras (Koszul dual to double Lie algebras), discussed in~\cite{Leray::protoperads_1, Leray::protoperads_2};
	\item the dioperad of balanced infinitesimal bialgebras studied in~\cite{BIB}, 
\end{itemize}
 although the Koszul property of the corresponding universal envelopes was verified there by different methods.

\end{document}